\begin{document}

\theoremstyle{definition}
\newtheorem{step}{Step}
\newtheorem{definition}{Definition}[section]
\theoremstyle{remark}
\newtheorem{remark}{Remark}[section]
\theoremstyle{plain}
\newtheorem{prop}{Proposition}[section]
\newtheorem{claim}{Claim}[section]
\newtheorem{lemma}{Lemma}[section]
\newtheorem{theorem}{Theorem}[section]
\newtheorem{cor}{Corollary}[section]

\title{Additivity of Spin$^c$ Quantization under Cutting}
\author{Shay Fuchs}
\begin{abstract}A $G$-equivariant spin$^c$
structure on a manifold gives rise to a virtual
representation of the group $G$, called \emph{the
spin$^c$ quantization} of the manifold. We
present a cutting construction for
$S^1$-equivariant spin$^c$ manifolds, and show
that the quantization of the original manifold is
isomorphic to the direct sum of the quantizations
of the cut spaces. Our proof uses Kostant-type
formulas, which express the quantization in terms
of local data around the fixed point set of the
$S^1$-action.
\end{abstract}

\maketitle

\section{Introduction}
In this paper we discuss $S^1$-equivariant
spin$^c$ structures on compact oriented
Riemannian $S^1$-manifolds, and the Dirac
operator associated to those structures. The
index of the Dirac operator is a virtual
representation of $S^1$, and is called \emph{the
spin$^c$ quantization} of the spin$^c$ manifold.

 Also, we
describe a cutting construction for spin$^c$
structures. Cutting was first developed by E.
Lerman for symplectic manifolds (see \cite{L}),
and then extended to manifolds that posses other
structures. In particular, our recipe is closely
related to the one described in \cite{CKT}.

The goal of this paper is to point out a relation
between spin$^c$ quantization and cutting. We
claim that the quantization of our original
manifold is isomorphic (as a virtual
representation) to the direct sum of the
quantizations of the cut spaces. We refer to this
property as `additivity under cutting'.

In \cite{GSW}, Guillemin, Sternberg and Weitsman
define \emph{signature quantization} and show
that it satisfies `additivity under cutting'. In
fact, this observation motivated the present
paper.

It is important to mention that in the
 this property does not hold for the most common
 `almost-complex quantization'.
In this case, we start with an almost complex
compact manifold, and a Hermitian line bundle
with Hermitian connection, and construct the
Dolbeaut-Dirac operator associated to this data.
Its index is a virtual vector space, and in the
presence of an $S^1$-action on the manifold and
the line bundle, we get a virtual representation
of $S^1$, called the Dolbeau-Dirac quantization
of the manifold (see \cite{Kar} or
\cite{Symp-surj}). This is a special case of our
spin$^c$ quantization, since an almost complex
structure and a complex line bundle determine a
spin$^c$ structure, which gives rise to the same
Dirac operator (See Lemma 2.7 and Remark 2.9 in
\cite{CKT}, and Appendix D in \cite{LM}).
However, in the almost complex case, the cutting
is done along the zero level set of the moment
map determined by the line bundle and the
connection. This results in additivity for all
weights except zero. More precisely, if
$N_\pm(\mu)$ denotes the multiplicity of the
weight $\mu$ in the almost complex quantization
of the cut spaces, and $N(\mu)$ is the weight of
$\mu$ in the quantization of the original
manifold, we have (see p.258 in \cite{Symp-surj})
$$ N(\mu)=N_+(\mu)+N_-(\mu)\qquad,\qquad \mu\ne
0$$ but $$ N(0)=N_+(0)=N_-(0)$$ and therefore
there is no additivity in general.

On the other hand, if spin$^c$ cutting is done
for a spin$^c$ manifold $M$ (in particular, the
spin$^c$ structure can come from an almost
complex structure), then the additivity will hold
for any weight. Roughly speaking, this happens
because the spin$^c$ cutting is done at the level
set $1/2$ of the `moment map', which is not a
weight (i.e., an integer) for the group $S^1$.

In order to make this paper as self-contained as
possible, we review the necessary background on
spin$^c$ equivariant structures, Clifford
algebras and spin$^c$ quantization in Section
\ref{spin-c-quant}. We describe in details the
cutting process in Section \ref{spin-c-cut}. In
Sections \ref{iso} and \ref{noniso-sec} we
develop Kostant-type formulas forspin$^c$
quantizations in terms of local data around
connected components of the fixed point set, and
finally in Section \ref{additivity} we prove the
additivity result. In Section \ref{two-sphere},
we give a detailed example that illustrates the
additivity property of spin$^c$ quantization. In
particular, we classify and cut all the
$S^1$-equivariant spin$^c$ structures on the
two-sphere. In the last section, we comment about
the relation of our work to the original
symplectic cutting construction.

Throughout this paper, all spaces will assumed to
be smooth manifolds, and all maps and actions are
assumed to be smooth. The principal action in a
principal bundle will be always a right action. A
real vector bundle $E$, equipped with a fiberwise
inner product, will be called a \emph{Riemannian
vector bundle}. If the fibers are also oriented,
then its bundle of oriented orthonormal frames
will be denoted by $SOF(E)$. For an oriented
Riemannian manifold $M$, we will simply write
$SOF(M)$, instead of $SOF(TM)$.

\textbf{Acknowledgements.} I would like to thank
my supervisor, Yael Karshon, for offering me this
project, guiding and supporting me through the
process of developing and writing the material,
and for having always good advice and a lot of
patience. I also would like to thank Lisa Jeffrey
and Eckhard Meinrenken for useful discussions and
important comments.

\section{Spin$^c$ Quantization}\label{spin-c-quant}
In this section we define the concept of
\emph{spin$^c$ quantization} as the index of the
Dirac spin$^c$ operator associated to a manifold
endowed with a spin$^c$ structure. The
quantization will be a virtual complex vector
space, and in the presence of a Lie group action
it will be a virtual representation of that
group.

\subsection{Spin$^c$
structures}

\begin{definition} Let  $V$  be a finite dimensional vector space over  $\mathbb{K}=\mathbb{R}\mbox{ or }
\mathbb{C}$,  equipped with a symmetric bilinear
form  $B:V\times V\rightarrow\mathbb{K}$.  The
\emph{Clifford algebra} \ $Cl(V,B)$  is the
quotient  $T(V)/I(V,B)$  where  $T(V)$  is the
tensor algebra of  $V$  and  $I(V,B)$  is the
ideal generated by $\{ v\otimes v-B(v,v)\cdot
1\;:\; v\in V\}$.
\end{definition}

\begin{remark}
If $v_1,\dots,v_n$ is an orthogonal basis for
$V$, then $Cl(V,B)$ is the algebra generated by
$v_1,\dots,v_n$ subject to the relations
$v_i^2=B(v_i,v_i)\cdot 1$ and $v_i v_j=-v_j v_i$ for $i\neq j$.\\
Note that $V$ is a vector subspace of $Cl(V,B)$.
\end{remark}

\begin{definition}
If $V=\mathbb{R}^k$ and $B$ is minus the standard inner product on
$V$, then define the following objects:
\begin{enumerate}
\item $C_k:=Cl(V,B)$, and $C_k^c:=Cl(V,B)\otimes\mathbb{C}$.\\ These are finite dimensional algebras over $\mathbb{R}$ and $\mathbb{C}$, respectively.
\item The \emph{spin group} $$Spin(k)=\{v_1 v_2 \dots v_l\;:\; v_i\in\mathbb{R}^k,\ ||v_i||=1
\mbox{ and } 0\le l \mbox{ is even}\}\subset C_k$$
\item The \emph{spin$^c$ group} $$Spin^c(k)= {\left(Spin(k)\times U(1)\right)}\diagup{K}$$ where $U(1)\subset\mathbb{C}$ is the unit circle and
$K=\{(1,1),(-1,-1)\}$.
\end{enumerate}

\end{definition}

\begin{remark} \
\begin{enumerate}
\item Equivalently, one can define
\begin{multline*}
\qquad \ Spin^c(k)=\\ =\left\{c\cdot v_1\cdots v_l\;: \linebreak \;
v_i\in\mathbb{R}^k,\ ||v_i||=1,\  0\le l \mbox{ is even, }\mbox{ and
}  c\in U(1)\right\}\subset C^c_k
\end{multline*}
\item The group $Spin(k)$ is connected for $k\ge 2$.
\end{enumerate}
\end{remark}

\begin{prop}
\

\begin{enumerate}
\item There is a linear map $C_k\rightarrow C_k\;,\; x\mapsto x^t$, characterized by
$(v_1\dots v_l)^t=v_l\dots v_1$ for all
$v_1,\dots,v_l\in\mathbb{R}^k$.
\item For each $x\in Spin(k)$ and $y\in\mathbb{R}^k$, we have
$xyx^t\in\mathbb{R}^k$.
\item For each $x\in Spin(k)$, the map
$\lambda(x):\mathbb{R}^k\rightarrow\mathbb{R}^k\;,\;
y\mapsto xyx^t$, is in $SO(k)$, and
$\lambda:Spin(k)\rightarrow SO(k)$ is a double
covering for $k\ge 1$. It is a universal covering
map for $k\ge 3$.
\end{enumerate}
\end{prop}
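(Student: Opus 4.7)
For part (1), the plan is to define the transpose first on the tensor algebra and then descend. Let $\tau : T(V) \to T(V)$ be the unique linear anti-homomorphism extending the identity on $V$, so that $\tau(v_1 \otimes \cdots \otimes v_l) = v_l \otimes \cdots \otimes v_1$. Since $\tau$ fixes each generator $v \otimes v - B(v,v)\cdot 1$ of the ideal $I(V,B)$, it preserves the ideal and descends to a well-defined linear map $x \mapsto x^t$ on the quotient $C_k$ with the stated property.

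For part (2), the key is a reflection identity. For a unit vector $v \in \mathbb{R}^k$ and any $y \in \mathbb{R}^k$, the Clifford relations give $v^2 = B(v,v)\cdot 1 = -1$ and $yv + vy = 2B(v,y)\cdot 1$, which combine to yield
\[
vyv = -yv^2 + 2B(v,y)v = y - 2\langle v, y \rangle v,
\]
the reflection $R_v(y)$ of $y$ across the hyperplane $v^\perp$. For $x = v_1 \cdots v_l \in Spin(k)$ one has $x^t = v_l \cdots v_1$, and iterating this identity from the innermost vector outward yields $xyx^t = (R_{v_1} \circ \cdots \circ R_{v_l})(y) \in \mathbb{R}^k$, proving (2). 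Since $l$ is even, $\lambda(x)$ is a product of an even number of reflections, hence lies in $SO(k)$.

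For part (3), I would prove surjectivity of $\lambda$ via the Cartan--Dieudonn\'e theorem: every element of $SO(k)$ is a product of an even number of hyperplane reflections, and by the above computation any such product is $\lambda(v_1 \cdots v_l)$. To identify the kernel, note that $xx^t = (-1)^l = 1$ for $x = v_1 \cdots v_l \in Spin(k)$ (each $v_i^2 = -1$, and $l$ is even), so $x^t = x^{-1}$; the condition $\lambda(x) = \mathrm{id}$ then reduces to $xy = yx$ for all $y \in \mathbb{R}^k$, placing $x$ in $Z(C_k)$. A standard basis computation (using the volume element) shows $Z(C_k) \cap C_k^{\mathrm{even}} = \mathbb{R} \cdot 1$, and combined with $xx^t = 1$ this forces $x = \pm 1$, both of which lie in $Spin(k)$ (write $-1 = e_1 \cdot e_1$). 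For $k \ge 3$, universality follows by combining the connectedness of $Spin(k)$ (stated in the preceding Remark) with the standard topological fact $\pi_1(SO(k)) = \mathbb{Z}/2$: a connected, nontrivial double cover of $SO(k)$ is automatically simply connected. I expect this last topological input and the center calculation to be the only substantive hurdles; everything else reduces mechanically to the reflection identity and the definition of $Spin(k)$ as even products of unit vectors.
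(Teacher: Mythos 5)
The paper does not actually prove this proposition; it refers the reader to page 16 of Friedrich's \emph{Dirac Operators in Riemannian Geometry}. There is therefore no ``paper's proof'' to compare against, but your argument is correct and is the standard one found in that reference and in Lawson--Michelsohn. You define the transpose as the descent of the reversal anti-involution of the tensor algebra, obtain the reflection identity $vyv = y - 2\langle v,y\rangle v$ from the Clifford relations with $B = -\langle\cdot,\cdot\rangle$, and conclude (2) and the $SO(k)$-valuedness by composing an even number of reflections. Surjectivity via Cartan--Dieudonn\'e, the kernel computation via $x^t = x^{-1}$ and $Z(C_k)\cap C_k^{\mathrm{even}} = \mathbb{R}\cdot 1$, and the universality claim via $\pi_1(SO(k)) = \mathbb{Z}/2$ together with the connectedness of $Spin(k)$ (which the paper records in the adjacent remark) are all sound. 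Two routine points you leave tacit but should perhaps flag: that $\lambda$ is a group homomorphism (immediate from $(xy)^t = y^t x^t$), and that a continuous surjective Lie group homomorphism with discrete kernel is automatically a covering map --- this is what upgrades ``two-to-one'' to ``double covering.'' Neither is a genuine gap, and your proof would serve as a correct substitute for the citation.
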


For the proof, see page 16 in \cite{Fr}.

\begin{definition}
Let $M$ be a manifold and $Q$ a principal
$SO(k)$-bundle on $M$. A \emph{spin$^c$
structure} on $Q$ is a principal
$Spin^c(k)$-bundle $P\rightarrow M$, together
with a map $\Lambda:P\rightarrow Q$, such that
the following diagram commutes.

$$
\begin{CD}
P\times Spin^c(k)     @>>>          P  \\
@VV\Lambda\times\lambda^c V   @VV\Lambda V \\
Q\times SO(k)       @>>>       Q\\
\end{CD}
$$\\

Here, the maps corresponding to the horizontal arrows are the
principal actions, and $\lambda^c:Spin^c(k)\rightarrow SO(k)$ is
given by $[x,z]\mapsto\lambda(x)$, where $\lambda:Spin(k)\rightarrow
SO(k)$ is the double covering.
\end{definition}

\begin{remark}\
\begin{enumerate}
\item  A spin$^c$ structure on an oriented Riemannian vector bundle $E$ is a spin$^c$ structure on the associated bundle of
oriented orthonormal frames,  $SOF(E)$.
\item A spin$^c$ structure on an oriented Riemannian manifold is a
spin$^c$ structure on its tangent bundle.
\item Given a spin$^c$ structure on $Q\rightarrow M$, \emph{its determinant line bundle}
is $\mathbb{L}=P\times_{Spin^c(k)}\mathbb{C}$,
where the left action of $Spin^c(k)$ on
$\mathbb{C}$ is given by $[x,z]\cdot w=z^2 w$.
This is a hermitian line bundle over $M$.
\end{enumerate}
\end{remark}

\subsection{Equivariant spin$^c$ structures}

\begin{definition}
Let $G,H$ be Lie groups. A \emph{$G$-equivariant
principal $H$-bundle} is a principal $H$-bundle
$\pi:Q\rightarrow M$ together with left
$G$-actions on $Q$ and $M$, such that
\begin{enumerate}
\item $\pi(g\cdot q)=g\cdot\pi(q)$ for all $g\in G\;,\; q\in Q$\\
(i.e., $G$ acts on the fiber bundle $\pi:Q\rightarrow M$).
\item $(g\cdot q)\cdot h=g\cdot(q\cdot h)$ for all $g\in G\;,\; q\in Q\;,\; h\in
H$\\
(i.e., the actions of $G$ and $H$ commute).
\end{enumerate}

\begin{remark}
It is convenient to think of a $G$-equivariant principal $H$-bundle
in terms of the following commuting diagram (the horizontal arrows correspond to the $G$ and $H$ actions).\\
$$
\begin{CD}
G\times Q       @>>>     Q    @<<<     Q\times H\\
@VId\times\pi VV                   @VV\pi V           @.\\
G\times M       @>>>     M    @.\\
\end{CD}
$$\\
\end{remark}

\begin{definition}
Let $\pi:E\rightarrow M$ be a fiberwise  oriented
Riemannian vector bundle, and let $G$ be a Lie
group. If a  $G$-action on $E\to M$ is given that
preserves the orientations and the inner products
of the fibers, we will call $E$ a
\emph{$G$-equivariant oriented Riemannian vector
bundle}.
\end{definition}

\begin{remark} \
\begin{enumerate}
\item If $E$ is a $G$-equivariant oriented Riemannian vector bundle, then $SOF(E)$ is a $G$-equivariant
principal $SO(k)$-bundle, where $k=rank(E)$.
\item If a Lie group G acts on an oriented Riemannian manifold $M$ by orientation preserving
isometries, then the frame bundle $SOF(M)$ becomes a $G$-equivariant
principal $SO(m)$-bundle, where $m=$dim$(M)$.
\end{enumerate}
\end{remark}

\end{definition}

\begin{definition}
Let $\pi:Q\rightarrow M$ be a $G$-equivariant principal
$SO(k)$-bundle. \emph{A $G$-equivariant spin$^c$ structure} on $Q$
is a spin$^c$ structure $\Lambda:P\rightarrow Q$ on $Q$, together
with a a left action of $G$ on $P$, such that
\begin{enumerate}
\item $\Lambda(g\cdot p)=g\cdot\Lambda(p)$ for all $p\in P$, $g\in
G$ (i.e., $G$ acts on the bundle $P\rightarrow Q$).
\item $g\cdot(p\cdot x)=(g\cdot p)\cdot x$ for all $g\in G$, $p\in P$, $x\in
Spin(k)$\\ (i.e., the actions of $G$ and $Spin^c(k)$ on $P$
commute).
\end{enumerate}
\end{definition}

\begin{remark}\label{Remark_spin-c_str} \
\begin{enumerate}
\item We have the following commuting diagram (where the
horizontal arrows correspond to the principal and the $G$-actions).

$$
\begin{CD}
G\times P       @>>>     P    @<<<     P\times Spin^c(k)\\
@V Id\times\Lambda VV                    @V\Lambda VV          @V\Lambda\times\lambda^c VV\\
G\times Q       @>>>     Q    @<<<     Q\times SO(k)\\
@V Id\times\pi VV                   @V\pi VV           @.\\
G\times M       @>>>     M    @.\\
\end{CD}
$$\\
\item The bundle $P\rightarrow M$ is a $G$-equivariant
principal $Spin^c(k)$-bundle.
\item The
determinant line bundle
$\mathbb{L}=P\times_{Spin^c(k)}\mathbb{C}$ is a
$G$-equivariant Hermitian line bundle.
\end{enumerate}
\end{remark}

\subsection{Clifford multiplication and spinor bundles}

\begin{prop}
The number of inequivalent irreducible (complex)
representations of the algebra
$C^c_k=C_k\otimes\mathbb{C}$ is $1$ if $k$ is
even and $2$ if $k$ is odd.
\end{prop}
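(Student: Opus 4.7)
The plan is to classify $C^c_k$ up to isomorphism of complex algebras, and then invoke the standard fact that a full matrix algebra $M_n(\mathbb{C})$ has a unique irreducible complex representation up to equivalence (its defining one), while a direct sum $M_n(\mathbb{C})\oplus M_n(\mathbb{C})$ has exactly two (the projections onto each factor). Specifically, it will suffice to show that $C^c_{2m}\cong M_{2^m}(\mathbb{C})$ and $C^c_{2m+1}\cong M_{2^m}(\mathbb{C})\oplus M_{2^m}(\mathbb{C})$.

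The main tool is a periodicity isomorphism $C^c_{k+2}\cong C^c_k\otimes_\mathbb{C} C^c_2$. Writing $f_1,\dots,f_{k+2}$ for the standard generators of $C^c_{k+2}$ and $e_1,\dots,e_k$, $\epsilon_1,\epsilon_2$ for those of $C^c_k$ and $C^c_2$, I would define a map on generators by $f_j\mapsto e_j\otimes i\epsilon_1\epsilon_2$ for $1\le j\le k$ and $f_{k+j}\mapsto 1\otimes\epsilon_j$ for $j=1,2$, and check that the images satisfy $f_j^2=-1$ and $f_if_j+f_jf_i=0$ for $i\neq j$. The universal property of the Clifford algebra then produces an algebra homomorphism; since the two sides both have complex dimension $2^{k+2}$ and the images clearly generate the tensor product, the map is an isomorphism.

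For the base cases, $C^c_1=\mathbb{C}[e]/(e^2+1)\cong\mathbb{C}\oplus\mathbb{C}$ via $e\mapsto (i,-i)$, and $C^c_2\cong M_2(\mathbb{C})$ by sending $\epsilon_1,\epsilon_2$ to $i$ times two anticommuting hermitian involutions (e.g.\ the Pauli matrices $\sigma_1$ and $\sigma_3$). Combined with the periodicity and the elementary $M_n(\mathbb{C})\otimes_\mathbb{C} M_2(\mathbb{C})\cong M_{2n}(\mathbb{C})$, as well as $(M_n\oplus M_n)\otimes M_2\cong M_{2n}\oplus M_{2n}$, a straightforward induction on $m$ yields the claimed classification, and hence the count of irreducibles.

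The only place that demands care is the verification of the periodicity map, where one must use that $(\epsilon_1\epsilon_2)^2=-1$ in $C^c_2$ so that $(i\epsilon_1\epsilon_2)^2=1$, and that $i\epsilon_1\epsilon_2$ anticommutes with each of $\epsilon_1,\epsilon_2$; this is what makes the mixed generators from the two factors anticommute and keeps the sign in $f_j^2=-1$ correct. Getting these signs right is the only genuine obstacle; the rest is dimension counting and formal algebra.
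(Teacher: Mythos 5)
Your proof is correct and follows the same standard route as the paper's cited source (Theorem~I.5.7 of Lawson--Michelsohn, \emph{Spin Geometry}): establish the complex periodicity $C^c_{k+2}\cong C^c_k\otimes_\mathbb{C} C^c_2$, pin down the base cases $C^c_1\cong\mathbb{C}\oplus\mathbb{C}$ and $C^c_2\cong M_2(\mathbb{C})$, conclude by induction that $C^c_{2m}\cong M_{2^m}(\mathbb{C})$ and $C^c_{2m+1}\cong M_{2^m}(\mathbb{C})\oplus M_{2^m}(\mathbb{C})$, and count simple factors. The sign checks in your periodicity map (using $(i\epsilon_1\epsilon_2)^2=1$ and that $i\epsilon_1\epsilon_2$ anticommutes with $\epsilon_1,\epsilon_2$) are exactly right, and the surjectivity-plus-dimension argument correctly yields the isomorphism.
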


For a proof, see Theorem I.5.7 in \cite{LM}.\\

Note that, for all $k$, $\mathbb{R}^k\subset
C_k\subset C^c_k$.

\begin{definition}
Let $k$ be a positive integer. Define a
\emph{Clifford multiplication map}
$$\mu:\mathbb{R}^k\otimes\Delta_k\to\Delta_k\qquad\text{by}\quad\mu(x\otimes
v)=\rho_k(x)v$$ where $\rho_k:C_k^c\to
End(\Delta_k)$ is an irreducible representation
of $C_k^c$ (a choice is to be made if $k$ is
odd).
\end{definition}

\begin{definition}
Let $k$ be a positive integer and $\rho_k$  an
irreducible representation of $C_k^c$. The
restriction of $\rho_k$ to the group
$Spin(k)\subset C_k\subset C_k^c$ is called
\emph{the complex spin representation} of
$Spin(k)$. It will be also denoted by $\rho_k$.
\end{definition}

\begin{remark} \
For an odd integer $k$, the complex spin
representation is independent of the choice of an
irreducible representation of $C_k^c$ (see
Proposition I.5.15 in \cite{LM}).
\end{remark}

The following proposition summarizes a few facts
about the complex spin representation. Proofs can
be found in \cite{Fr} and in \cite{LM}.
\begin{prop}\label{T:spinrep} Let $\rho_k:Spin(k)\rightarrow
End(\Delta_k)$ be the complex spin
representation. Then
\begin{enumerate}
\item $dim_\mathbb{C}\Delta_k=2^l$, where $l=k/2$ if $k$ is even, and
$l=(k-1)/2$ if $k$ is odd.

\item $\rho_k$ is a faithful representation of $Spin(k)$.

\item If $k$ is odd, then $\rho_k$ is
irreducible.

\item If $k$ is even, then $\rho_k$ is
reducible, and splits as a sum of two
inequivalent irreducible representations of the
same dimension,
$$\rho_k^+:Spin(k)\rightarrow End(\Delta^+_k)\qquad\text{and}\qquad
\rho_k^-:Spin(k)\rightarrow End(\Delta^-_k)\ .$$

\end{enumerate}
\end{prop}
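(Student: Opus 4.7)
The plan is to base everything on the structure theorem for complex Clifford algebras together with the embedding of $Spin(k)$ into the even part $C_k^0$. First, I would establish (or cite) the structure theorem: $C_k^c \cong M_{2^{k/2}}(\mathbb{C})$ when $k$ is even, and $C_k^c \cong M_{2^{(k-1)/2}}(\mathbb{C}) \oplus M_{2^{(k-1)/2}}(\mathbb{C})$ when $k$ is odd. The standard route is induction via the two-periodicity isomorphism $C_{k+2}^c \cong C_k^c \otimes_{\mathbb{C}} M_2(\mathbb{C})$, obtained by constructing explicit generators out of Pauli-type matrices. Part (1) of the proposition is then immediate, since the unique irreducible representation of a matrix algebra $M_N(\mathbb{C})$ has dimension $N$, and both irreducibles of $M_N(\mathbb{C}) \oplus M_N(\mathbb{C})$ also have dimension $N$.

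Next, I would invoke the classical algebra isomorphism $C_k^{0,c} \cong C_{k-1}^c$, induced by $e_i \mapsto e_i \cdot e_k$ for $1 \le i \le k-1$ (one checks $(e_i e_k)^2 = -1$, matching the Clifford relation for $\mathbb{R}^{k-1}$ with the negative-definite form, and then compares dimensions). Since $Spin(k)$ consists of products of an even number of unit vectors, it sits inside $C_k^0$; moreover, the $\mathbb{C}$-linear span of $Spin(k)$ inside $C_k^{0,c}$ is all of $C_k^{0,c}$, because any product of an even number of vectors of arbitrary nonzero lengths can be normalized to a product of unit vectors, absorbing the discrepancy into a complex scalar. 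Consequently, a subspace of $\Delta_k$ is $Spin(k)$-invariant if and only if it is $C_k^{0,c}$-invariant, and likewise for intertwiners. This is the pivotal reduction: it lets us read off the $Spin(k)$-representation theory of $\Delta_k$ directly from its $C_k^{0,c}$-module structure.

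With this reduction in hand the remaining parts fall out. If $k$ is odd then $k-1$ is even, so $C_k^{0,c} \cong M_{2^{(k-1)/2}}(\mathbb{C})$, whose unique irreducible module has exactly dimension $\dim_{\mathbb{C}} \Delta_k$; hence $\Delta_k$ is that irreducible module, giving (3), and since the defining action of $M_N(\mathbb{C})$ on $\mathbb{C}^N$ is faithful, so is $\rho_k$ on $Spin(k)$. If $k$ is even then $k-1$ is odd, so $C_k^{0,c} \cong M_N(\mathbb{C}) \oplus M_N(\mathbb{C})$ with $N = 2^{k/2 - 1}$; its two inequivalent irreducibles have combined dimension $2N = \dim_{\mathbb{C}} \Delta_k$, so $\Delta_k$ must be their direct sum, yielding (4), and faithfulness again follows because the full direct-sum action is faithful on the algebra.

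The main obstacle is really the structure theorem for $C_k^c$ together with the even/odd subalgebra identification $C_k^{0,c} \cong C_{k-1}^c$; these are the only genuinely nontrivial inputs, and everything else is bookkeeping. The one subtlety worth highlighting is the equivalence between $Spin(k)$-invariance and $C_k^{0,c}$-invariance, without which irreducibility of the matrix-algebra action would not transfer to irreducibility of $\rho_k$.
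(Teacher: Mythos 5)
The paper does not prove this proposition; it simply defers to Friedrich \cite{Fr} and Lawson--Michelson \cite{LM}, so there is no in-paper argument to compare against. Your route --- the structure theorem for $C_k^c$, the even-subalgebra isomorphism $C_k^{0,c}\cong C_{k-1}^c$ via $e_i\mapsto e_ie_k$, and the key observation that $Spin(k)$ spans $C_k^{0,c}$ over $\mathbb{C}$ so that $Spin(k)$-invariance and intertwiners agree with $C_k^{0,c}$-ones --- is the standard textbook approach, and it handles parts (1), (3), and the odd-$k$ half of (2) correctly.

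There is, however, a gap in your treatment of part (4). You assert that because the two inequivalent irreducibles of $C_k^{0,c}\cong M_N(\mathbb{C})\oplus M_N(\mathbb{C})$ have combined dimension $2N=\dim_{\mathbb{C}}\Delta_k$, ``$\Delta_k$ must be their direct sum.'' This does not follow from the dimension count alone: a $2N$-dimensional module over $M_N(\mathbb{C})\oplus M_N(\mathbb{C})$ could just as well be two copies of the simple module on which only the first matrix factor acts, the second acting by zero. To exclude that possibility you need faithfulness \emph{before} you conclude the decomposition, not as a corollary of it --- and as written your argument is circular, since you derive faithfulness from (4) and (4) from the dimension count. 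The fix is cheap and should come first: for $k$ even, $C_k^c\cong M_{2^{k/2}}(\mathbb{C})$ is a simple algebra, so the nonzero representation $\rho_k$ is automatically faithful on $C_k^c$, hence on the subalgebra $C_k^{0,c}$. A faithful module over $M_N(\mathbb{C})\oplus M_N(\mathbb{C})$ must have both simple factors acting nontrivially, so it contains at least one copy of each inequivalent irreducible; the dimension count then forces exactly one copy of each, which gives both (4) and, by restriction to $Spin(k)\subset C_k^{0,c}$, the even-$k$ case of (2). With that reordering the proof is complete.
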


\begin{remark}
The representation $\rho_k$ extends to a representation of the group
$Spin^c(k)$, and will be also denoted by $\rho_k$. Explicitly,
$$\rho_k:Spin^c(k)\rightarrow End(\Delta_k)\qquad,\qquad
\rho_k([x,z])v=z\cdot \rho_k(x)v\ .$$
\end{remark}

\begin{definition}
Let $P$ be a spin$^c$ structure on an oriented Riemannian manifold
$M$. Then the \emph{spinor bundle} of the spin$^c$ structure is the
complex vector bundle $S=P\times_{Spin^c(m)}\Delta_m$, where
$m=dim(M)$.\\
If $P$ is a $G$-equivariant spin$^c$ structure, then $S$ will be a
$G$-equivariant complex vector bundle.
\end{definition}

\begin{remark}
It is possible to choose a Hermitian inner
product on $\Delta_k$ which is preserved by the
action of the group $Spin^c(k)$. This induces a
Hermitian inner product on the spinor bundle. In
the $G$-equivariant case, $G$ will act on the
fibers of $S$ by Hermitian transformations.
\end{remark}

From Proposition \ref{T:spinrep} we get

\begin{prop}
Let $P$ be a ($G$-equivariant) spin$^c$ structure
on an oriented Riemannian manifold $M$ of even
dimension, and let $S$ be the corresponding
spinor bundle. Then $S$ splits as a sum
$S=S^+\oplus S^-$ of two ($G$-equivariant)
complex vector bundles.

\end{prop}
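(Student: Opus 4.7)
The plan is to push the representation-level splitting of $\Delta_m$ from Proposition \ref{T:spinrep}(4) through the associated bundle construction. Since $m = \dim M$ is even, that proposition gives $\Delta_m = \Delta_m^+ \oplus \Delta_m^-$ as $Spin(m)$-representations. The first step is to verify that this decomposition is actually $Spin^c(m)$-invariant. Recall from the earlier remark that $Spin^c(m)$ acts on $\Delta_m$ by $\rho_m([x,z])v = z \cdot \rho_m(x)v$: the $Spin(m)$-factor preserves each $\Delta_m^\pm$ by the proposition, and the $U(1)$-factor acts by scalar multiplication, which preserves every linear subspace. Hence $\Delta_m^+$ and $\Delta_m^-$ are $Spin^c(m)$-invariant subspaces.

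Second, I would define
\[
S^\pm := P \times_{Spin^c(m)} \Delta_m^\pm,
\]
which are complex vector subbundles of $S = P \times_{Spin^c(m)} \Delta_m$. The natural map $S^+ \oplus S^- \to S$ coming from the inclusions is pointwise (on fibers) the isomorphism $\Delta_m^+ \oplus \Delta_m^- \to \Delta_m$, so it is a bundle isomorphism. This establishes the non-equivariant statement.

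Third, in the $G$-equivariant case, the $G$-action on $S$ is the one induced on the associated bundle by the commuting left $G$-action on $P$ (Remark \ref{Remark_spin-c_str}): explicitly $g \cdot [p, v] = [g \cdot p, v]$. Because this formula leaves the fiber coordinate $v$ untouched, the subbundles $S^\pm$, which were defined by restricting $v$ to the $Spin^c(m)$-invariant subspaces $\Delta_m^\pm$, are preserved by $G$. Thus $S = S^+ \oplus S^-$ as $G$-equivariant complex vector bundles.

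There is no real obstacle here; the only point that needs a moment's care is the $Spin^c(m)$-invariance of $\Delta_m^\pm$, and once one notices that the $U(1)$-factor acts by scalars, the rest is formal: the associated bundle functor preserves direct sums of representations, and it commutes with the $G$-action defined on $P$ because that action is defined fiberwise in the bundle construction.
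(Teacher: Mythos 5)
Your proof is correct and matches the paper's intended argument: the paper simply cites Proposition \ref{T:spinrep} and gives no further detail, and your write-up supplies exactly the expected steps (the $Spin^c(m)$-invariance of $\Delta_m^\pm$ via the scalar action of the $U(1)$-factor, the associated-bundle construction of $S^\pm$, and the observation that the $G$-action on $P\times_{Spin^c(m)}\Delta_m$ does not mix the fiber summands).
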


\begin{remark}
If $M$ is an oriented Riemannian manifold,
equipped with a spin$^c$ structure, and a
corresponding spinor bundle $S$, then a Clifford
multiplication map
$\mu:\mathbb{R}^k\otimes\Delta_k\to\Delta_k$
induces a map on the associated bundles
$TM\otimes S\to S$. This map is also called
Clifford multiplication and will be denoted by
$\mu$ as well.
\end{remark}

\subsection{The spin$^c$ Dirac operator}\ \\
The following is a reformulation of Proposition D.11 from \cite{LM}:

\begin{prop}
Let $M$ be an oriented Riemannian manifold of
dimension $m\ge 1$, $P\rightarrow SOF(M)$ a
spin$^c$ structure on $M$, and $P_1=P/Spin(m)$
(this quotient can be defined since $Spin(m)$
embeds naturally in $Spin^c(m)$). Then
\begin{enumerate}
\item $P_1$ is a principal $U(1)$-bundle over $M$, and $P\rightarrow
SOF(M)\times P_1$ is a double cover.

\item The determinant line bundle of the spin$^c$
structure is naturally isomorphic to
$\mathbb{L}=P_1\times_{U(1)}\mathbb{C}$.
\item If $A:TP_1\rightarrow i\mathbb{R}$ is an invariant connection,
and $Z:T(SOF(M))\rightarrow~\mathfrak{so}(m)$ the
Levi-Civita connection on $M$, then the
$SO(m)\times U(1)$-invariant connection $Z\times
A$ on $SOF(M)\times P_1$ lifts to a unique
$Spin^c(m)$-invariant connection on its double
cover $P$.
\end{enumerate}
\end{prop}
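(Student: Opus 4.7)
The plan is to base everything on two homomorphisms: the squaring map $\mathrm{sq}:Spin^c(m)\to U(1)$, $[x,z]\mapsto z^2$, and the combined map $(\lambda^c,\mathrm{sq}):Spin^c(m)\to SO(m)\times U(1)$. I would first verify that $\ker(\mathrm{sq})$ is exactly the image of the natural inclusion $Spin(m)\hookrightarrow Spin^c(m)$, $x\mapsto[x,1]$, so that the induced $Spin(m)$-action on $P$ is free and the quotient homomorphism identifies $Spin^c(m)/Spin(m)\cong U(1)$.

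For part (1), once this is set up, $P_1=P/Spin(m)\to M$ is automatically a principal $U(1)$-bundle because $P\to M$ is principal for $Spin^c(m)$ and $Spin(m)$ is a normal subgroup. For the double cover claim, I would compute $\ker(\lambda^c,\mathrm{sq})$: using $\ker\lambda=\{\pm 1\}$ and the relation $[-x,-z]=[x,z]$, the kernel reduces to the two elements $[1,1]$ and $[-1,1]=[1,-1]$. The map $P\to SOF(M)\times_M P_1$ sending $p\mapsto(\Lambda(p),[p]_{Spin(m)})$ is surjective and its fibers are precisely the orbits of this kernel acting by right multiplication, so the map is a double cover.

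For part (2), the representation $[x,z]\cdot w=z^2 w$ defining $\mathbb{L}$ is trivial on the subgroup $Spin(m)\subset Spin^c(m)$. Hence the associated bundle construction factors through the $Spin(m)$-quotient, giving $\mathbb{L}=P\times_{Spin^c(m)}\mathbb{C}\cong P_1\times_{U(1)}\mathbb{C}$, where the induced $U(1)$-action on $\mathbb{C}$ becomes standard scalar multiplication once $z^2$ is renamed $u$.

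For part (3), the product connection $Z\times A$ on the principal $SO(m)\times U(1)$-bundle $SOF(M)\times_M P_1$ is invariant by construction. Its horizontal distribution pulls back, via the local diffeomorphism $P\to SOF(M)\times_M P_1$ of part (1), to a horizontal distribution on $P$. Because the cover is equivariant for the surjection $(\lambda^c,\mathrm{sq})$, this distribution is $Spin^c(m)$-invariant and hence defines a connection on $P$; uniqueness is automatic since the projection is a local diffeomorphism. The main technical obstacle is really the kernel computation in part (1); the remaining identifications are forced once the group-theoretic picture is in place.
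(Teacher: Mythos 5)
The paper does not actually prove this proposition; it states it as ``a reformulation of Proposition D.11 from \cite{LM}'' and defers to Lawson--Michelsohn. So there is no in-paper proof to compare against, and your argument should be judged on its own merits and against the standard one in [LM], which it essentially reproduces. Your proof is correct. The two key group-theoretic computations are right: $\ker(\mathrm{sq})=\{[x,1]:x\in Spin(m)\}$ because $[x,-1]=[-x,1]$, so $Spin(m)$ sits as the kernel of the surjection $\mathrm{sq}:Spin^c(m)\to U(1)$, a compact normal subgroup, whence $P_1=P/Spin(m)$ is a principal $U(1)$-bundle; and $\ker(\lambda^c,\mathrm{sq})=\{[1,1],[1,-1]\}$ has order two, giving the double cover $P\to SOF(M)\times_M P_1$ (you correctly read the paper's $SOF(M)\times P_1$ as the fiber product over $M$, which is what is meant). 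Your surjectivity argument and identification of the fibers with the orbits of the kernel are exactly what is needed. Part (2) is the standard observation that the defining representation $[x,z]\cdot w=z^2w$ kills $Spin(m)$, so the associated bundle factors through $P_1$. For part (3), your approach via pulling back the horizontal distribution through the local diffeomorphism is clean; you could have noted in addition that the Lie algebra map $\mathfrak{spin}^c(m)\to\mathfrak{so}(m)\oplus i\mathbb{R}$ induced by $(\lambda^c,\mathrm{sq})$ is an isomorphism (the cover has discrete kernel), so the lifted distribution really does define a principal connection and the $Spin^c(m)$-invariance follows from equivariance of the covering and $SO(m)\times U(1)$-invariance of $Z\times A$; uniqueness is, as you say, forced by the local diffeomorphism. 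No gaps.
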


\begin{remark}
If $G$ acts on $M$ by orientation preserving
isometries, $P$ is a $G$-equivariant spin$^c$
structure on $M$, and the connection $A$ on $P_1$
is chosen to be $G$-invariant, then $Z\times A$
and its lift to $P$ will be $G$-invariant.
\end{remark}

\begin{definition}
Assume the following data is given:
\begin{enumerate}
\item An oriented Riemannian manifold $M$ of dimension $m$.
\item A spin$^c$ structure $P\rightarrow SOF(M)$ on $M$, with the associated spinor bundle~$S$.
\item A connection on $P_1=P/Spin(m)$ which gives rise to a
covariant derivative $\nabla:\Gamma(S)\rightarrow\Gamma(T^*M\otimes
S)$
\end{enumerate}

\noindent The \emph{Dirac spin$^c$ operator} (or
simply, the \emph{Dirac operator}) associated to
this data is the composition

$$D:\Gamma(S)\xrightarrow{\quad\nabla\quad}\Gamma(T^*M\otimes
S)\xrightarrow{\quad\simeq\quad}\Gamma(TM\otimes
S)\xrightarrow{\quad\mu\quad}\Gamma(S)\ ,$$

\noindent where the isomorphism is induced by the
Riemannian metric (which identifies $T^*M\simeq
TM$), and $\mu$ is the Clifford multiplication.
\end{definition}

\begin{remark} \
\begin{enumerate}
\item Since there are two ways to define $\mu$ when $k$ is odd, one
has to make a choice for $\mu$ to get a
well-defined Dirac operator.
\item If $G$ acts on $M$ by orientation preserving isometries, the
spin$^c$ structure on $M$ is $G$-equivariant, and the connection on
$P_1$ is $G$-invariant, then the Dirac operator $D$ will commute
with the $G$-action on $\Gamma(S)$.
\item If $dim(M)$ is even, then the Dirac operator decomposes into
a sum of two operators
$D^{\pm}:\Gamma(S^{\pm})\to\Gamma(S^{\mp})$
(since $\mu$ interchanges $S^+$ and $S^-$), which
are also called Dirac operators.
\item If the manifold $M$ is complete, then the Dirac operator
is essentially self-adjoint on $L^2(S)$, the
square integrable sections of $S$ (See Theorem
II.5.7 in \cite{LM} or chapter 4 in \cite{Fr}).
\end{enumerate}
\end{remark}

\subsection{Spin$^c$ quantization}\label{spin^c quantization}

We now restrict to the case of an even
dimensional oriented Riemannian manifold $M$
which is also compact. Since the concept of
spin$^c$ quantization will be defined as the
index of the operator $D^+$, it makes sense to
define it only for even dimensional manifolds.
The compactness is used to ensure that
$dim(ker(D^+))$ and $dim(coker(D^+))$ are finite.

\begin{definition}
Assume that the following data is given:
\begin{enumerate}
\item An oriented compact Riemannian manifold $M$ of dimension
$2m$.
\item $G$ a Lie group that acts on $M$ by orientation preserving
isometries.
\item $P\to SOF(M)$ a $G$-equivariant spin$^c$ structure.
\item A $U(1)$-invariant connection on $P_1=P/Spin(2m)$.
\end{enumerate}

\noindent Then the \emph{spin$^c$ quantization of $M$}, with respect
to the above date, is the virtual complex $G$-representation
$Q(M)=ker(D^+)-coker(D^+)$.\\
The \emph{index} of $D^+$ is the integer
$index(D^+)=dim(ker(D^+))-dim(coker(D^+))$.
\end{definition}

\begin{remark}
In the absence of a $G$ action, the spin$^c$ quantization is just a
virtual complex vector space.
\end{remark}

\section{Spin$^c$ cutting}\label{spin-c-cut}
In \cite{L} Lerman describes the symplectic
cutting construction for symplectic manifolds
equipped with a Hamiltonian $G$-action. In
\cite{CKT} this construction is generalized to
manifolds with other structures, including
spin$^c$ manifolds. However, the cutting of a
spin$^c$ structure is incomplete in \cite{CKT},
since it only produces a spin$^c$ principal
bundle on the cut spaces $P_{cut}\to M_{cut}$,
without constructing a map $P_{cut}\to
SOF(M_{cut})$.

In this section, we describe the construction
from section 6 in \cite{CKT}  and fill the necessary gaps.\\
From now on we will work with $G$-equivariant
spin$^c$ structures. This includes the
non-equivariant case when $G$ is taken to be the
trivial group $\{e\}$.

\subsection{The product of two spin$^c$ structures}\label{product}

Note that the group $SO(m)\times SO(n)$ naturally
embeds in $SO(n+m)$ as block matrices, and
therefore it acts on $SO(n+m)$ from the left by
left multiplication.

The proof of the following claim is straightforward.

\begin{claim}
Let $M$ and $N$ be two oriented Riemannian
manifolds of respective dimensions $m$ and $n$.
Then the map $$(SOF(M)\times
SOF(N))\times_{SO(m)\times SO(n)} SO(n+m)\to
SOF(M\times N)$$
$$ [(f,g),K]\mapsto (f,g)\circ K$$
is an isomorphism of principal $SO(n+m)$-bundles.\\
Here, $f:\mathbb{R}^m\xrightarrow{\sim}T_aM$ and
$g:\mathbb{R}^n\xrightarrow{\sim}T_bN$ are frames, and
$K:\mathbb{R}^{m+n}\to\mathbb{R}^{m+n}$ is in $SO(m+n)$.
\end{claim}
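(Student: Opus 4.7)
My plan is to verify the three properties that together make the given map an isomorphism of principal $SO(n+m)$-bundles: it is well-defined on the quotient by $SO(m)\times SO(n)$, it is $SO(n+m)$-equivariant for the right principal action, and it is a bijection on each fiber (or equivalently, covers the identity on $M\times N$ and is fiberwise surjective). Throughout, I will identify $T_{(a,b)}(M\times N)$ with $T_aM\oplus T_bN$, with the product orientation and the product inner product, so that whenever $f:\mathbb{R}^m\xrightarrow{\sim} T_aM$ and $g:\mathbb{R}^n\xrightarrow{\sim} T_bN$ are oriented orthonormal frames, $f\oplus g:\mathbb{R}^{m+n}\xrightarrow{\sim} T_{(a,b)}(M\times N)$ is again an oriented orthonormal frame.

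First, I would check well-definedness on the balanced product. For $(A,B)\in SO(m)\times SO(n)$ acting on the left of $SO(n+m)$ as the block matrix $A\oplus B$, replacing $(f,g)$ by $(f\circ A,g\circ B)$ and $K$ by $(A\oplus B)^{-1}\circ K$ sends the image to
\[
\bigl((f\circ A)\oplus(g\circ B)\bigr)\circ (A\oplus B)^{-1}\circ K=(f\oplus g)\circ(A\oplus B)\circ(A\oplus B)^{-1}\circ K=(f\oplus g)\circ K,
\]
so the map descends to the quotient. Equivariance is immediate: replacing $K$ by $K\circ L$ for $L\in SO(n+m)$ changes the output to $(f\oplus g)\circ K\circ L$, which is precisely the right $SO(n+m)$-action on $SOF(M\times N)$.

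Next, I would verify that the map is a bijection on each fiber over $(a,b)\in M\times N$. For surjectivity, fix any frames $f_0,g_0$ at $a,b$ respectively, and for an arbitrary frame $h\in SOF(M\times N)_{(a,b)}$ set $K:=(f_0\oplus g_0)^{-1}\circ h$; then $K\in SO(n+m)$ because both $f_0\oplus g_0$ and $h$ are oriented orthonormal frames, and $[(f_0,g_0),K]$ maps to $h$. Injectivity follows from the freeness of the $SO(m)\times SO(n)$-action on $SOF(M)\times SOF(N)$: if $(f\oplus g)\circ K=(f'\oplus g')\circ K'$, then putting $(A,B)=\bigl((f')^{-1}\circ f,(g')^{-1}\circ g\bigr)\in SO(m)\times SO(n)$ one finds $(f',g')=(f\circ A^{-1},g\circ B^{-1})$ and $K'=(A\oplus B)\circ K$, so the two representatives define the same class.

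Having a fiberwise bijective, $SO(n+m)$-equivariant bundle map covering the identity on $M\times N$ is precisely an isomorphism of principal $SO(n+m)$-bundles, which proves the claim. The only potentially subtle point, and the step I would be most careful about, is checking that the product orientation convention makes $f\oplus g$ actually oriented (so that $K\in SO(n+m)$ and not merely in $O(n+m)$); this is a standard convention and poses no real difficulty, but it is where sign errors typically creep in.
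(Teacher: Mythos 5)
Your proof is correct. The paper does not actually supply an argument here---it simply declares ``the proof of the following claim is straightforward'' and moves on---so there is nothing to compare against; your verification (well-definedness on the balanced product, right $SO(n+m)$-equivariance, and fiberwise bijectivity, with the orientation convention on $f\oplus g$ correctly flagged as the one point requiring care) is precisely the standard argument the author left to the reader.
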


The above claim suggests a way to define the
product of two spin$^c$ manifolds (see also Lemma
6.10 from \cite{CKT}). There is a natural group
homomorphism $j:Spin(m)\times Spin(n)\to
Spin(m+n)$, which is induced from the embeddings
$$\mathbb{R}^m\hookrightarrow\mathbb{R}^m\times\{0\}\subset\mathbb{R}^{m+n}
\qquad\text{and}\qquad
\mathbb{R}^n\hookrightarrow\{0\}\times\mathbb{R}^n\subset\mathbb{R}^{m+n}\
.$$ This gives rise to a homomorphism
$$j^c:Spin^c(m)\times Spin^c(n)\to
Spin^c(m+n)\quad,\quad([A,a],[N,b])\mapsto
[j(A,B),ab]\ ,$$ and therefore $Spin^c(m)\times
Spin^c(n)$ acts from the left on $Spin^c(m+n)$
via $j^c$.

If a group $G$ acts on two manifolds $M$ and $N$, then it clearly
acts on $M\times N$ by $g\cdot (m,n)=(g\cdot m, g\cdot n)$, and the
above claim generalizes to this case as well.

\begin{definition}
Let $G$ be a Lie group that acts on two oriented Riemannian
manifolds $M$,$N$ by orientation preserving isometries. Let $P_M\to
SOF(M)$ and $P_N\to SOF(N)$ be $G$-equivariant spin$^c$ structures
on $M$ and $N$. Then $$P=(P_M\times P_N)\times_{Spin^c(m)\times
Spin^c(n)} Spin^c(m+n)\to SOF(M\times N)$$ is a $G$-equivariant
spin$^c$ structure on $M\times N$, called \emph{the product} of the
two given spin$^c$ structures.
\end{definition}

\begin{remark}
In the above setting, if $L_M$ and $L_N$ are the determinant line
bundles of the spin$^c$ structures on $M$ and $N$, respectively,
then the determinant line bundle of $P\to SOF(M\times N)$ is
$L_M\boxtimes L_N$ (exterior tensor product). See Lemma 6.10 from
\cite{CKT} for details.
\end{remark}

\subsection{Restriction of a spin$^c$ structure}\label{rest}
In general, it is not clear how to restrict a spin$^c$ structure
from a Riemannian oriented manifold to a submanifold. However, for
our purposes, it suffices to work with co-oriented submanifolds of co-dimension 1.\\

The proof of the following claim is
straightforward.
\begin{claim}
Assume that the following data is given:
\begin{enumerate}
\item $M$ an oriented Riemannian manifold of dimension $m$.
\item $G$ a Lie group that acts on $M$ by orientation preserving
isometries.
\item $Z\subset M$ a $G$-invariant co-oriented
submanifold of co-dimension 1.
\item $P\to SOF(M)$ a $G$-equivariant spin$^c$ structure on $M$.
\end{enumerate}
Define an injective  map
$$i:SOF(Z)\to SOF(M)\qquad,\qquad i(f)(a_1,\dots,a_m)=f(a_1,\dots,a_{m-1})+a_m\cdot v_p$$
where $f:\mathbb{R}^{m-1}\xrightarrow{\sim}T_p Z$
is a frame in $SOF(Z)$, and $v\in\Gamma(TM|_Z)$
is the vector field of positive unit
vectors, orthogonal to $TZ$.\\
Then the pullback $P'=i^*(P)\to SOF(Z)$ is a $G$-equivariant
spin$^c$ structure on $Z$, called \emph{the restriction of $P$ to
$Z$}.
\end{claim}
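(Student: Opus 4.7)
The plan is to identify $P' = i^*(P)$ with a $Spin^c(m-1)$-invariant subset of $P|_Z$, then recognize this subset as a principal $Spin^c(m-1)$-bundle over $Z$ equipped with an appropriate equivariant map to $SOF(Z)$. First I would use the injectivity of $i$ to identify $P' = \{(f,p) \in SOF(Z) \times P : i(f) = \Lambda(p)\}$ with the preimage $\Lambda^{-1}(i(SOF(Z))) \subset P|_Z$, so that the pullback map $\Lambda' : P' \to SOF(Z)$ becomes $i^{-1} \circ \Lambda$. Since $\Lambda : P \to SOF(M)$ is a principal $U(1)$-bundle (the kernel of $\lambda^c_m$ being $U(1)$), the map $\Lambda' : P' \to SOF(Z)$ is automatically a principal $U(1)$-bundle.

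Next I would introduce the natural embedding $\iota : Spin^c(m-1) \hookrightarrow Spin^c(m)$ induced by $\mathbb{R}^{m-1} \hookrightarrow \mathbb{R}^m$ on the first $m-1$ coordinates, which satisfies $\lambda^c_m \circ \iota = \iota_{SO} \circ \lambda^c_{m-1}$, where $\iota_{SO} : SO(m-1) \hookrightarrow SO(m)$ is the block embedding fixing the last coordinate. The map $i$ is precisely $\iota_{SO}$-equivariant (since the normal $v$ is the image of $e_m$), so $i(SOF(Z))$ is $\iota_{SO}(SO(m-1))$-invariant, and consequently $P' = \Lambda^{-1}(i(SOF(Z)))$ is preserved by $\iota(Spin^c(m-1))$. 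The restricted action is free; it is transitive on fibers of $P' \to Z$ because any element of $Spin^c(m)$ carrying one point of $P'_z$ to another must project under $\lambda^c_m$ into $\iota_{SO}(SO(m-1))$, hence lie in $\iota(Spin^c(m-1))$. Combined with local triviality of $P \to M$ in neighbourhoods admitting a frame whose last vector is $v$, this gives $P' \to Z$ the structure of a principal $Spin^c(m-1)$-bundle.

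The spin$^c$-structure equivariance of $\Lambda'$ under $\lambda^c_{m-1}$ is immediate from the corresponding identity for $\Lambda$ together with the compatibility $\lambda^c_m \circ \iota = \iota_{SO} \circ \lambda^c_{m-1}$. For $G$-equivariance, I would use that $G$-invariance of the co-orientation of $Z$ forces $G$-invariance of the unit normal field $v$, so $i$ is $G$-equivariant, $i(SOF(Z)) \subset SOF(M)$ is $G$-invariant, and therefore $P' = \Lambda^{-1}(i(SOF(Z)))$ inherits a $G$-action from $P$ that commutes with the $Spin^c(m-1)$-action by restriction of the corresponding property on $P$.

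The main obstacle I expect is verifying that $P' \to Z$ actually admits local trivializations as a principal $Spin^c(m-1)$-bundle, rather than merely carrying a fibrewise free and transitive action. This reduces to producing local $v$-extended sections of $SOF(Z) \to Z$ and lifting them through $\Lambda$ using local trivializations of the $Spin^c(m)$-bundle $P$; the key point is that the two reductions \textemdash{} of the structure group from $SO(m)$ to $SO(m-1)$ downstairs and from $Spin^c(m)$ to $Spin^c(m-1)$ upstairs \textemdash{} are compatible via $\lambda^c$. Everything else is a routine equivariance chase forced by the compatibilities above.
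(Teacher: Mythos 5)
The paper declares this claim straightforward and supplies no proof, so there is no argument in the text to compare against. Your argument is correct and is the expected one: identify $i^*(P)$ with $\Lambda^{-1}(i(SOF(Z)))$, observe that $i(SOF(Z))$ is an $SO(m-1)$-reduction of $SOF(M)|_Z$ with $i$ being $\iota_{SO}$-equivariant, and then pull that reduction back through $\lambda^c$. One step worth spelling out is your assertion that an element of $Spin^c(m)$ lying over $\iota_{SO}(SO(m-1))$ must lie in $\iota(Spin^c(m-1))$, which is what you need for fiberwise transitivity of the $Spin^c(m-1)$-action. This holds because $\iota(Spin(m-1)) \subset \lambda^{-1}(\iota_{SO}(SO(m-1)))$ and both are double covers of $SO(m-1)$, so they coincide; from this $(\lambda^c)^{-1}(\iota_{SO}(SO(m-1))) = \iota(Spin^c(m-1))$ follows immediately. (For $m\geq 3$ connectedness of $Spin(m-1)$ makes this transparent; the cases $m\leq 2$ are checked by hand.) Apart from that, the outline is sound. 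In particular you correctly identify that local triviality, not the fiberwise free-and-transitive action, is the only point requiring work, and the stated recipe — extend a local oriented frame of $TZ$ by $v$, lift the resulting local section of $SOF(M)|_Z$ through the $U(1)$-bundle $\Lambda$ — does the job. You also correctly isolate that $G$-invariance of the co-orientation (hence of $v$) is what makes $i$ a $G$-map, which is implicit in the paper's hypothesis that $Z$ is a $G$-invariant co-oriented submanifold.
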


\begin{remark} \
\begin{enumerate}
\item This is the relevant commutative diagram for the claim:

$$
\begin{CD}
P'=i^*(P) @>>> P \\
@VVV @VVV \\
SOF(Z) @>i>> SOF(M)\\
@VVV @VVV\\
Z @>>> M\\
\end{CD}
$$\\

\item The principal action of $Spin^c(m-1)$ on $P'$ is obtained
using the natural inclusion $Spin^c(m-1)\hookrightarrow Spin^c(m)$.

\item The determinant line bundle of $P'$ is the restriction to $Z$ of the
determinant line bundle of $P$.
\end{enumerate}
\end{remark}

\subsection{Quotients of spin$^c$ structures}\label{quot-spin-c}
We now discuss the process of taking quotients of
a spin$^c$ structure with respect to a group
action. Since the basic cutting construction
involves an $S^1$-action, we will only deal with
circle
actions.\\

\noindent Assume that the following data is given:
\begin{enumerate}
\item An oriented Riemannian manifold $Z$ of dimension $n$.
\item A free action $S^1\circlearrowright Z$ by isometries.
\item $P\to SOF(Z)$ an $S^1$-equivariant spin$^c$ structure on
$Z$.\\
\end{enumerate}

\noindent Denote by $\frac{\partial}{\partial\theta}\in Lie(S^1)$ an
infinitesimal generator, by
$\left(\frac{\partial}{\partial\theta}\right)_Z\in\chi(Z)$ the
corresponding vector field, and by $\pi:Z\to Z/S^1$ the quotient
map. Also let $V=\pi^*\left(T\left(Z/S^1\right)\right)$. This is an $S^1$-equivariant vector bundle over $Z$.
$$
\begin{CD}
V=\pi^*\left(T\left(Z/S^1\right)\right) @>>>  T\left(Z/S^1\right) \\
@VVV @VVV \\
Z @>\pi>> Z/S^1\\
\end{CD}
$$

We have the following simple fact.

\begin{lemma}
The map $$\left(\left(\frac{\partial}{\partial\theta}\right)_Z\right)^\bot
\longrightarrow V\qquad
\qquad v\in T_p Z\longmapsto (p,\pi_*
v)\in V_p$$ is an isomorphism of
$S^1$-equivariant vector bundles over $Z$.
\end{lemma}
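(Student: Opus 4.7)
The plan is to verify the three things packaged into the statement: the map is well-defined into $V$, it is a fiberwise linear isomorphism, and the isomorphism is compatible with both the smooth structure and the $S^1$-action. I expect no genuine obstacle; the main care is in book-keeping the $S^1$-action on the pullback bundle $V=\pi^{*}(T(Z/S^1))$.

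First I would check fiberwise bijectivity. Since the $S^1$-action on $Z$ is free and by isometries, the vector field $\xi:=(\partial/\partial\theta)_Z$ is nowhere zero, so $\xi^{\perp}\to Z$ is a smooth rank $(n-1)$ subbundle of $TZ$ (it is the orthogonal complement of the smooth line subbundle $\mathbb{R}\xi$). The quotient map $\pi\colon Z\to Z/S^1$ is a principal $S^1$-bundle, so $\pi_{*}\colon T_pZ\to T_{\pi(p)}(Z/S^1)$ is surjective with kernel equal to the vertical space $\mathbb{R}\xi_p$. Restricting to $\xi^{\perp}_p$ kills the kernel, and since $\dim\xi^{\perp}_p=n-1=\dim(Z/S^1)$, the induced map $\xi^{\perp}_p\to V_p=T_{\pi(p)}(Z/S^1)$ is a linear isomorphism.

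Next I would verify smoothness. The map $v\mapsto(p,\pi_{*}v)$ is the restriction to $\xi^{\perp}$ of the composition $TZ\to\pi^{*}T(Z/S^1)$ sending a tangent vector $v\in T_pZ$ to $(p,\pi_{*}v)$, which is smooth because $\pi$ is. Hence it is a smooth bundle map, and since it is a fiberwise isomorphism between smooth vector bundles of the same rank over $Z$, it is a bundle isomorphism.

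Finally I would check $S^1$-equivariance. For the source: because $S^1$ is abelian, $\xi$ is $S^1$-invariant, i.e.\ $g_{*}\xi_p=\xi_{g\cdot p}$; and because $S^1$ acts by isometries, $g_{*}$ preserves orthogonality, so $g_{*}(\xi^{\perp}_p)=\xi^{\perp}_{g\cdot p}$. For the target: since $\pi\circ g=\pi$, the identification $V_{g\cdot p}=T_{\pi(g\cdot p)}(Z/S^1)=T_{\pi(p)}(Z/S^1)=V_p$ shows that the $S^1$-action on the pullback bundle $V$ is the identity on fibers (it moves only the base point). Thus for $v\in\xi^{\perp}_p$,
\[
g\cdot(p,\pi_{*}v)=(g\cdot p,\pi_{*}v)=(g\cdot p,\pi_{*}(g_{*}v)),
\]
using $\pi\circ g=\pi$ in the last equality, which is exactly the image of $g_{*}v\in\xi^{\perp}_{g\cdot p}$ under our map. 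Combining the three steps yields the claimed isomorphism of $S^1$-equivariant vector bundles.
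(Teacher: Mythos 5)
Your proof is correct and complete: fiberwise injectivity via the kernel of $\pi_*$ being $\mathbb{R}\xi_p$, smoothness as a restriction of $TZ\to\pi^*T(Z/S^1)$, and equivariance using $Ad$-triviality for the abelian group plus $\pi\circ g=\pi$. The paper states this lemma as a "simple fact" with no written proof, and your argument is exactly the standard one the author has in mind.
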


\begin{remark}\label{quotients}
Using this lemma, we can endow $V$ with a
Riemannian metric and orientation, and hence $V$
becomes an oriented Riemannian vector bundle (of
rank $n-1$). We will think of $V$  as a
sub-bundle of $TZ$.

Also, if an orthonormal frame in $V$ is chosen,
then its image in $T(Z/S^1)$ is declared to be
orthonormal. This endows $Z/S^1$ with an
orientation and a Riemannian metric, and hence it
makes sense to speak of $SOF(Z/S^1)$.
\end{remark}

Now define a map $\eta\colon SOF(V)\to SOF(Z)$ in the following
way.
If $f:\mathbb{R}^{n-1}\xrightarrow{\simeq}V_p$ is a frame,
then $\eta(f)\colon\mathbb{R}^n\to T_pZ$ will be given by
 $\eta(f)e_i=f(e_i)$ for $i=1,\dots,n-1$ and
 $\eta(f)e_n$ is a unit vector in the direction of $\left(\frac{\partial}{\partial\theta}\right)_{Z,p}$ .

The following lemmas are used to get a spin$^c$
structure on $Z/S^1$. Their proofs are
straightforward and left to the reader.

\begin{lemma}
The pullback $\eta^*(P)\subset SOF(V)\times P$ is an
$S^1$-equivariant spin$^c$ structure on $SOF(V)$.\\
(The $S^1$-action on $\eta^*(P)$ is induced from
the $S^1$-actions on $SOF(V)$ and $P$, and the
right action of $Spin^c(n-1)$ is induced by the
natural inclusion $Spin^c(n-1)\subset
Spin^c(n)$).
\end{lemma}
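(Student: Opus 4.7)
The plan is to build a principal $Spin^c(n-1)$-bundle structure on $\eta^*(P)\to Z$, identify its natural projection to $SOF(V)$ as the spin$^c$ structure map, and then verify $S^1$-equivariance. The key input is that $\eta$ is $(SO(n-1),\iota,SO(n))$-equivariant, where $\iota\colon SO(n-1)\hookrightarrow SO(n)$ is the block inclusion fixing the last basis vector: since the $n$-th column of $\eta(f)$ is always the unit vector along $(\partial/\partial\theta)_Z$, we have $\eta(f\cdot A)=\eta(f)\cdot\iota(A)$. The inclusion $\iota$ lifts to $\tilde\iota^c\colon Spin^c(n-1)\hookrightarrow Spin^c(n)$ satisfying $\lambda^c_n\circ\tilde\iota^c=\iota\circ\lambda^c_{n-1}$. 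I would then define a right $Spin^c(n-1)$-action on $\eta^*(P)=\{(f,q)\in SOF(V)\times P : \eta(f)=\Lambda(q)\}$ by $(f,q)\cdot h=(f\cdot\lambda^c_{n-1}(h),\,q\cdot\tilde\iota^c(h))$; the pullback relation is preserved by combining the two equivariances with the compatibility above. The $S^1$-action is the diagonal one $g\cdot(f,q)=(g\cdot f,\,g\cdot q)$, well-defined because $(\partial/\partial\theta)_Z$ is $S^1$-invariant (making $\eta$ itself $S^1$-equivariant) and $\Lambda$ is $S^1$-equivariant by hypothesis.

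Next I would verify that $\eta^*(P)\to Z$ is a principal $Spin^c(n-1)$-bundle. Fibers are nonempty: given any $f\in SOF(V)_p$, surjectivity of $\Lambda$ produces $q\in P_p$ with $\Lambda(q)=\eta(f)$. For freeness, if $(f,q)\cdot h=(f,q)$ then the first coordinate forces $\lambda^c_{n-1}(h)=1$, placing $h$ in $U(1)=\ker\lambda^c_{n-1}\subset Spin^c(n-1)$, and the second coordinate then forces $\tilde\iota^c(h)=1$ in $Spin^c(n)$, so $h=1$ by injectivity of $\tilde\iota^c$. For transitivity on fibers, given $(f,q),(f',q')\in\eta^*(P)_p$, pick the unique $B\in SO(n-1)$ with $f'=f\cdot B$ and the unique $g\in Spin^c(n)$ with $q'=q\cdot g$; comparing the pullback conditions $\eta(f)=\Lambda(q)$ and $\eta(f')=\Lambda(q')$ with the equivariances of $\eta$ and $\Lambda$ forces $\lambda^c_n(g)=\iota(B)$, so $g$ lies in $(\lambda^c_n)^{-1}(\iota(SO(n-1)))=\tilde\iota^c(Spin^c(n-1))$. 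Writing $g=\tilde\iota^c(h)$ yields the translating element.

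The projection $\Lambda'\colon\eta^*(P)\to SOF(V)$, $(f,q)\mapsto f$, intertwines the $Spin^c(n-1)$-action with the $SO(n-1)$-action via $\lambda^c_{n-1}$ by construction, so $(\eta^*(P),\Lambda')$ is a spin$^c$ structure on $SOF(V)$. The $S^1$-action commutes with the $Spin^c(n-1)$-action because it does so factorwise on $SOF(V)$ and on $P$, completing $S^1$-equivariance. The main subtlety lies in the transitivity step, which depends on matching the common $U(1)$ inside $Spin^c(n)$ and $Spin^c(n-1)$ as the kernels of $\lambda^c_n$ and $\lambda^c_{n-1}$; this matching is exactly what ensures $(\lambda^c_n)^{-1}(\iota(SO(n-1)))=\tilde\iota^c(Spin^c(n-1))$ rather than a larger group, so that the fibers of $\eta^*(P)\to Z$ are genuine $Spin^c(n-1)$-torsors and not torsors for some other $U(1)$-extension of $SO(n-1)$.
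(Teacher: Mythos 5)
Your proof is correct. The paper declares this lemma (and the one following it) ``straightforward and left to the reader,'' so there is no written proof in the paper to compare against; your argument supplies exactly the expected details, with the right emphasis on the one nontrivial point, namely that $(\lambda^c_n)^{-1}(\iota(SO(n-1)))=\tilde\iota^c(Spin^c(n-1))$ rather than some larger central extension of $SO(n-1)$, which is what makes the fibers genuine $Spin^c(n-1)$-torsors.

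One small remark: in the freeness step, once the second coordinate gives $\tilde\iota^c(h)=1$, injectivity of $\tilde\iota^c$ finishes the argument directly, so the detour through the first coordinate and $U(1)=\ker\lambda^c_{n-1}$ is not actually needed (though it is harmless). Also worth noting, though not strictly required by the statement, is that local triviality of $\eta^*(P)\to Z$ follows from local triviality of $SOF(V)\to Z$ together with the fact that $\Lambda\colon P\to SOF(Z)$ is a (two-to-one) covering onto its image over each fiber, so a local section of $SOF(V)$ lifts along $\eta$ and then along $\Lambda$ to a local section of $\eta^*(P)$.
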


\begin{equation*}
\begin{array}{ccc}
\eta^*(P) & \xrightarrow{\qquad} & P\\[5pt]
\downarrow & & \downarrow \\[5pt]
SOF(V) & \xrightarrow{\quad\eta\quad} & SOF(Z) \\[5pt]
\qquad\searrow & & \swarrow\qquad \\[5pt]
 & Z & \\
\end{array}
\end{equation*}

\begin{lemma}
Consider the $S^1$-equivariant spin$^c$ structure
$\eta^*(P)\rightarrow SOF(V) \rightarrow Z$. The quotient of each of
the three components by the left $S^1$ action gives rise to a
spin$^c$ structure on $Z/S^1$, called \emph{the quotient of the
given spin$^c$ structure}.
$$
\begin{CD} \overline{P}:=\eta^*(P)/S^1\\
@VVV \\
SOF(Z/S^1)=SOF(V)/S^1\\
@VVV \\
Z/S^1
\end{CD}
$$
\end{lemma}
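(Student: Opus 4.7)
The plan is to verify, tier by tier in the tower $\eta^*(P)\to SOF(V)\to Z$, that the $S^1$-action is free (and proper, since $S^1$ is compact), that each quotient is a smooth manifold carrying the inherited principal bundle structure, and finally that the quotient of the spin$^c$ map $\eta^*(P)\to SOF(V)$ intertwines the residual $Spin^c(n-1)$- and $SO(n-1)$-actions. I would carry this out in three steps.

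First I would establish freeness and smoothness of all quotients. The action on $Z$ is free by hypothesis; since the left $S^1$-action commutes with the right principal action of $SO(n)$ on $SOF(Z)$, and the latter is free with orbit space $Z$, the $S^1$-action on $SOF(Z)$ is free. The same argument, applied inductively using the definitions of $G$-equivariant principal bundle and $G$-equivariant spin$^c$ structure, shows that $S^1$ acts freely on $P$, on $SOF(V)$, and on $\eta^*(P)\subset SOF(V)\times P$. Compactness of $S^1$ makes each action proper, so each of the three orbit spaces is a smooth manifold, and the descended right actions of $Spin^c(n-1)$ and $SO(n-1)$ are again free. Local triviality of the quotient principal bundles is obtained by first trivializing each of the original principal bundles over a local slice for $Z\to Z/S^1$, after which each total space is locally a product with $S^1$ and the quotient is transparent.

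Second, I would identify $SOF(V)/S^1$ with $SOF(Z/S^1)$. Given a frame $f\colon\mathbb{R}^{n-1}\xrightarrow{\sim}V_p$, the composition $\pi_*\circ f$ is an orthonormal oriented frame of $T_{[p]}(Z/S^1)$ by the choice of metric and orientation on $Z/S^1$ made in Remark~\ref{quotients}. Two frames in $V$ descend to the same frame of $T(Z/S^1)$ precisely when they lie in the same $S^1$-orbit (using the $S^1$-equivariant isomorphism $V\cong((\partial/\partial\theta)_Z)^\perp$ from the lemma above), so the assignment $f\mapsto\pi_*\circ f$ induces an $SO(n-1)$-equivariant diffeomorphism $SOF(V)/S^1\cong SOF(Z/S^1)$ over $Z/S^1$.

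Finally, I would verify the spin$^c$ compatibility of $\overline{P}=\eta^*(P)/S^1$. The map $\eta^*(P)\to SOF(V)$ satisfies the defining intertwining relation with $\lambda^c\colon Spin^c(n-1)\to SO(n-1)$, and the $S^1$-action commutes with both the $Spin^c(n-1)$- and $SO(n-1)$-actions, so this relation descends verbatim and the induced map $\overline{P}\to SOF(Z/S^1)$ makes the spin$^c$ square commute. The only point requiring genuine care is the smoothness and local triviality of the three quotients; once that is handled by the slice argument in step one, the rest is the bookkeeping observation that commuting actions continue to commute after passing to orbit spaces.
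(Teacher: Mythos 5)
The paper explicitly leaves this lemma's proof to the reader as "straightforward," so there is no argument in the paper to compare against; your three-step verification (freeness of the $S^1$-action at each tier so the quotients are smooth principal bundles, the identification $SOF(V)/S^1\cong SOF(Z/S^1)$ via $f\mapsto\pi_*\circ f$, and descent of the intertwining relation with $\lambda^c$ because the $S^1$-action commutes with the $Spin^c(n-1)$- and $SO(n-1)$-actions) is exactly the intended routine check and is correct. One small point worth tightening: for freeness on $SOF(V)$ and on $\eta^*(P)$ you do not really need to route through the commutation with the structure group — it suffices that the projections to $Z$ are $S^1$-equivariant and that $S^1$ acts freely on $Z$, since any stabilizer upstairs injects into a stabilizer on $Z$.
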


\begin{remark}
If $L$ is the determinant line bundle of the given spin$^c$
structure on $Z$, then the determinant line bundle of $\overline{P}$
is $L/S^1$.
\end{remark}

\subsection{Spin$^c$ cutting}\label{spin-c-cutting}
We are now in the position of describing the process of cutting a
given $S^1$-equivariant spin$^c$ structure on a manifold. Assume
that the following data is given:
\begin{enumerate}
\item An oriented Riemannian manifold $M$ of dimension $m$.
\item An action of $S^1$ on $M$ by
isometries.
\item A co-oriented submanifold $Z\subset M$ of co-dimension 1 that is
$S^1$-invariant. We also demand that $S^1$ acts
freely on $Z$, and that $M\setminus Z$ is a
disjoint union of two open pieces $M_+$, $M_-$,
such that positive (resp.\ negative) normal
vectors point into $M_+$ (resp.\ $M_-$). Such
submanifolds are called \emph{reducible splitting
hypersurfaces} (see definitions 3.1 and 3.2 in
\cite{CKT}).
\item $P\rightarrow SOF(M)$ an $S^1$-equivariant spin$^c$ structure on $M$.
\end{enumerate}

We will use the following fact.
\begin{claim}
There is an invariant (smooth) function
$\Phi:M\to\mathbb{R}$, such that
$\Phi^{-1}(0)=Z$, $\Phi^{-1}(0,\infty)=M_+$,
$\Phi^{-1}(-\infty,0)=M_-$ and $0$ is a regular
value of~$\Phi$.
\end{claim}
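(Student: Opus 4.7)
The plan is to construct $\Phi$ as a smoothed signed normal coordinate to $Z$, built from an $S^1$-equivariant tubular neighborhood; the only nontrivial ingredient is this invariant collar.

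Since $S^1$ is connected and permutes the two components $M_\pm$ of $M\setminus Z$, each of $M_\pm$ must be $S^1$-invariant, so the co-orientation of $Z$ is $S^1$-invariant. Combined with the invariant Riemannian metric, this yields a smooth $S^1$-invariant unit normal vector field $\nu$ along $Z$ pointing into $M_+$. The normal exponential map $\tau(z,s):=\exp_z(s\nu_z)$ is then $S^1$-equivariant (because the metric is) and, after shrinking and possibly rescaling the normal coordinate by a positive $S^1$-invariant function on $Z$, restricts to an $S^1$-equivariant diffeomorphism from $Z\times(-\varepsilon,\varepsilon)$ onto an open $S^1$-invariant neighborhood $U$ of $Z$ in $M$. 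Setting $t(\tau(z,s)):=s$ yields a smooth $S^1$-invariant function $t:U\to(-\varepsilon,\varepsilon)$ which vanishes exactly on $Z$, has nowhere-vanishing differential, and is positive precisely on $U\cap M_+$.

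To globalize, I would pick a smooth strictly increasing function $\psi:\mathbb{R}\to\mathbb{R}$ with $\psi(0)=0$, $\psi'(0)>0$, and $\psi(s)=\pm\varepsilon/2$ whenever $\pm s\ge \varepsilon/2$, and set
$$
\Phi(p):=\begin{cases}\psi(t(p)) & p\in U,\\ \varepsilon/2 & p\in M_+\setminus U,\\ -\varepsilon/2 & p\in M_-\setminus U.\end{cases}
$$
On the overlap $U\setminus\tau(Z\times[-\varepsilon/2,\varepsilon/2])$ the top formula already equals $\pm\varepsilon/2$, matching the value assigned outside $U$ on the appropriate side, so $\Phi$ is well defined and smooth on $M$. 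Invariance, the identities $\Phi^{-1}(0)=Z$, $\Phi^{-1}((0,\infty))=M_+$, $\Phi^{-1}((-\infty,0))=M_-$, and regularity of $0$ are then immediate from the corresponding properties of $t$ together with strict monotonicity of $\psi$ near $0$.

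The main obstacle is establishing the $S^1$-equivariant tubular neighborhood used in the second step; this is the one place where compactness of $S^1$ and its acting by isometries are genuinely used. Once the invariant collar and its coordinate $t$ are in hand, the rest is a standard bump-function argument.
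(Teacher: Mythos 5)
Your proof is correct and takes a genuinely different route from the paper's. The paper's argument is the standard partition-of-unity recipe: pick submanifold charts adapted to $Z$, define a local signed defining function in each chart, glue by a partition of unity to get a globally smooth $\Phi_0$ with the right zero set, signs on $M_\pm$, and regularity at $Z$ (since at a point of $Z$ the summed differential is a nonnegative combination of covectors all pairing positively with normals into $M_+$), and then average $\Phi_0$ over $S^1$ to restore invariance, noting that each step preserves the sign and regularity properties because $Z$ and $M_\pm$ are $S^1$-invariant. Your construction instead builds the function directly and invariantly from the geometry, via the equivariant normal exponential collar and a bump function; it avoids having to check that partition-of-unity and averaging preserve the required properties, at the cost of invoking the (equivariant) tubular neighborhood theorem. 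Both uses of $S^1$-compactness (for you, the existence of an invariant collar; for the paper, the averaging integral) are comparable in strength.

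One technical point worth flagging in your version: you replace the collar by $Z\times(-\varepsilon,\varepsilon)$ after rescaling and then extend by the constants $\pm\varepsilon/2$ outside $U$. For this extension to be smooth you actually need the closure of $\{|t|\le\varepsilon/2\}$ in $M$ to lie in $U$, i.e.\ the collar should be chosen ``proper'' in the normal direction; this is automatic when $Z$ is compact (e.g.\ in Section 5, where $M$ is compact), and in general can be arranged by a further shrinking of the width function, but it is not immediate from just ``an embedding of $Z\times(-\varepsilon,\varepsilon)$''. Either adding a sentence about choosing a proper collar, or noting that one can alternatively run the paper's partition-of-unity argument, would close this small gap.
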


To prove this claim, first define $\Phi$ locally
on a chart, use a partition of unity to get a
globally well defined function on the whole
manifold, and then average with respect to the
group action to get $S^1$-invariance.

 This function $\Phi$ plays the role of a
`moment map' for the $S^1$ action. To define
\emph{the cut space} $M_{cut}^+$, first introduce
an $S^1$-action on $M\times\mathbb{C}$
$$a\cdot(m,z)=(a\cdot m, a^{-1}z)$$ and then let
$M_{cut}^+=\left\{(m,z)|\Phi(m)=|z|^2\right\}/S^1$.
The cut space $M_{cut}^-$ is defined similarly,
using the diagonal action on $M\times\mathbb{C}$
$$a\cdot(m,z)=(a\cdot m, a\cdot z)$$ and by
setting
$M_{cut}^-=\left\{(m,z)|\Phi(m)=-|z|^2\right\}/S^1$.

\begin{remark}
The orientation and the Riemannian metric on $M$
(and on $\mathbb{C}$) descend to the cut spaces
$M_{cut}^\pm$ as follows. $M\times\mathbb{C}$ is
naturally an oriented Riemannian manifold.
Consider the map
$$\widetilde{\Phi}:M\times\mathbb{C}\to\mathbb{R}\qquad \widetilde{\Phi}(m,z)=\Phi(m)-|z|^2$$
Zero is a regular value of $\widetilde\Phi$, and
therefore $\widetilde{Z}=\widetilde\Phi^{-1}(0)$
is a manifold. It inherits a metric and is
co-oriented (hence oriented). Since $S^1$ acts
freely on $\widetilde Z$, the quotient
$M_{cut}^+=\widetilde Z/S^1$ is an oriented
Riemannian manifold (see Remark \ref{quotients}).

A similar procedure, using
$\widetilde{\Phi}(m,z)=\Phi(m)+|z|^2$, is carried
out in order to get an orientation and a metric
on $M_{cut}^-$.\\
We also have an $S^1$ action of the cut spaces
(see Remark \ref{M-action}).
\end{remark}

The purpose of this subsection is to describe how
to get spin$^c$ structures on $M_{cut}^\pm$ from
the given spin$^c$ structure on $M$. We start by
constructing a spin$^c$ structure on $M_{cut}^+$.

\begin{step}
Consider $\mathbb{C}$ with its natural structure
as an oriented Riemannian manifold, and let
$$P_\mathbb{C}=\mathbb{C}\times
Spin^c(2)\longrightarrow
SOF(\mathbb{C})=\mathbb{C}\times SO(2)
\longrightarrow\mathbb{C}$$ be the trivial
spin$^c$-structure on $\mathbb{C}$. Turn it into
an $S^1$-equivariant spin$^c$ structure by
letting $S^1$ act on $P_\mathbb{C}$:
$$e^{i\theta}\cdot(z,[a,b])=(e^{-i\theta}z,[x_{-\theta/2}\cdot a,e^{i\theta/2}\cdot b])
\qquad z\in\mathbb{C}\ ,\ [a,b]\in Spin^c(2) $$ where
$x_{\theta}=\cos\theta+\sin\theta\cdot e_1 e_2 \in
Spin(2)$.\\
Here is a diagram for this structure.

$$\begin{CD}
S^1\times P_{\mathbb{C}} @>>> P_\mathbb{C} @<<< P_\mathbb{C}\times Spin^c(2) \\
@VVV @VVV @VVV \\
S^1\times SOF(\mathbb{C}) @>>>
SOF(\mathbb{C}) @<<< SOF(\mathbb{C})\times SO(2) \\
@VVV @VVV \\
S^1\times\mathbb{C} @>>> \mathbb{C} \\
\end{CD}$$
\end{step}

\begin{step}
Taking the product of the spin$^c$ structures $P$
(on $M$) and $P_\mathbb{C}$ (on $\mathbb{C}$), we
get an ($S^1$ equivariant) spin$^c$ structure
$P_{M\times\mathbb{C}}$ on $M\times\mathbb{C}$
(see \S\ref{product}).
\end{step}

\begin{step}
It is easy to check that
$$ \widetilde{Z}=\{(m,z)|\Phi(m)=|z|^2\}\subset M\times\mathbb{C} $$
is an $S^1$-invariant co-oriented submanifold of
co-dimension one, and therefore we can restrict
$P_{M\times\mathbb{C}}$ and get an
$S^1$-equivariant spin$^c$ structure
$P_{\widetilde{Z}}$ on $\widetilde{Z}$ (see
\S\ref{rest}).
\end{step}

\begin{step}
Since $P_{\widetilde Z}\to SOF(\widetilde Z) \to
\widetilde Z$ is an $S^1$-equivariant spin$^c$
structure, we can take the quotient by the
$S^1$-action to get a spin$^c$ structure
$P_{cut}^+$ on $M_{cut}^+=\widetilde Z /S^1$ (see
\S\ref{quot-spin-c}).

\begin{remark} \label{M-action} The spin$^c$ structure $P_{cut}^+$
can be turned into an $S^1$-equivariant one. This
is done by observing that we actually have two
$S^1$ actions on $M\times\mathbb{C}$: the
anti-diagonal action $a\cdot(m,z)=(a\cdot m,
a^{-1}\cdot z)$ and the \emph{M-action}
$a\cdot(m,z)=(a\cdot m, z)$. These actions
commute with each other, and the M-action
naturally decends to the cut space $M_{cut}^+$
and lifts to the spin$^c$ structure $P_{cut}^+$.
\end{remark}
\end{step}

Let us now describe briefly the analogous
construction for $M_{cut}^-$.

\setcounter{step}{0}
\begin{step}
Define $P_\mathbb{C}$ as before, but with the action
$$e^{i\theta}\cdot(z,[a,b])=(e^{i\theta}z,[x_{\theta/2}\cdot a,e^{i\theta/2}\cdot b])
 $$
\end{step}

\begin{step}
Define the spin$^c$ structure
$P_{M\times\mathbb{C}}$ on $M\times\mathbb{C}$ as
before.
\end{step}

\begin{step}
As before, replacing $\widetilde{Z}$ with
$\{(m,z)|\Phi(m)=-|z|^2\}\subset M\times\mathbb{C}$ .
\end{step}

\begin{step}
Repeat as before to get a spin$^c$ structure
$P_{cut}^-$ on $M_{cut}^-$.
\end{step}

\begin{remark}\label{det_line_bndl}
In step 1 we defined a spin$^c$ structure on $\mathbb{C}$. The
corresponding determinant line bundle is the trivial line bundle
$\mathbb{L}_\mathbb{C}=\mathbb{C}\times\mathbb{C}$ over $\mathbb{C}$
(with projection $(z,b)\mapsto z$). The $S^1$ action on
$\mathbb{L}_\mathbb{C}$ is given by
\[
a\cdot(z,b)=
    \begin{cases}
    (a^{-1}\cdot z,a\cdot b) &\text{for $P_{cut}^+$}\\
    \\
    (a\cdot z, a\cdot b)     &\text{for $P_{cut}^-$}\\
    \end{cases}
\]
If $\mathbb{L}$ is the determinant line bundle of
the given spin$^c$ structure on $M$, then the
determinant line bundle on $M_{cut}^\pm$ is given
by

$$
\mathbb{L}_{cut}^\pm=\left[\left(\mathbb{L}\boxtimes\mathbb{L}_\mathbb{C}\right)|_
{\widetilde{Z}}\right]/S^1$$ where we divide by
the diagonal action of $S^1$ on
$\mathbb{L}\times\mathbb{L_C}$. This is an
$S^1$-equivariant complex line bundle (with
respect to the M-action).
\end{remark}

\section{The generalized Kostant formula for isolated fixed
points}\label{iso}

\noindent Assume that the following data is given:
\begin{enumerate}
\item An oriented compact Riemannian manifold $M$ of dimension
$2m$.
\item $T=\mathbb{T}^n$ an $n$-dimensional torus that acts on $M$ by
isometries.
\item $P\to SOF(M)$ a $T$-equivariant spin$^c$ structure, with
determinant line bundle $\mathbb{L}$.
\item A $U(1)$-invariant connection on $P_1=P/Spin(2m)$.
\end{enumerate}
As we saw in \S\ref{spin^c quantization}, this data determines a
complex virtual representation $Q(M)=ker(D^+)-coker(D^+)$ of
$T$.  Denote by $\chi\colon T\to\mathbb{C}$ its character.\\

\begin{lemma}\label{weight-lemma}
Let $x\in M^T$ be a fixed point, and choose a
$T$-invariant complex structure $J:T_xM\to T_xM$.
Denote by
$\alpha_1,\dots,\alpha_m\in\mathfrak{t}^*=Lie(T)^*$
the weights of the action $T\circlearrowright
T_xM$, and by $\mu$ the weight of
$T\circlearrowright\mathbb{L}_x$. Then
$\frac{1}{2}\left(\mu-\sum_{j=1}^m\alpha_j\right)$
is in the weight lattice of \ $T$.
\end{lemma}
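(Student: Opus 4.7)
The plan is to lift the $T$-action at $x$ to a homomorphism $\phi\colon T\to Spin^c(2m)$, compare it with the canonical $Spin^c$-lift of the unitary action produced from $J$, and read off the lemma from the discrepancy between the two lifts.

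First, since $T$ fixes $x$ and the $T$-action on $P$ commutes with the right $Spin^c(2m)$-action, fixing any point $p\in P_x$ yields a unique $\phi(t)\in Spin^c(2m)$ with $t\cdot p=p\cdot\phi(t)$, and $\phi\colon T\to Spin^c(2m)$ is a Lie group homomorphism. Using the frame $\Lambda(p)\colon\mathbb{R}^{2m}\xrightarrow{\sim}T_xM$, the composition $\lambda^c\circ\phi$ is precisely the $T$-representation on $T_xM$; and because $\mathbb{L}_x=P_x\times_{Spin^c(2m)}\mathbb{C}$ with $[x,z]$ acting on $\mathbb{C}$ by $z^2$, the character of $\mathbb{L}_x$ is $\det^c\circ\phi=\mu$, where $\det^c\colon Spin^c(2m)\to U(1)$ sends $[x,z]\mapsto z^2$.

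Next, I would invoke the canonical group homomorphism $\iota\colon U(m)\to Spin^c(2m)$ lifting the inclusion $U(m)\hookrightarrow SO(2m)$ and satisfying $\det^c\circ\iota=\det_{\mathbb{C}}$ (see Appendix D of \cite{LM}); on the maximal torus it is explicitly
$$(e^{i\theta_1},\dots,e^{i\theta_m})\;\longmapsto\;\bigl[\,\textstyle\prod_j(\cos(\theta_j/2)+\sin(\theta_j/2)\,e_{2j-1}e_{2j}),\;e^{i(\theta_1+\dots+\theta_m)/2}\,\bigr],$$
and the identification $[A,z]=[-A,-z]$ in $Spin^c(2m)$ is exactly what makes this well defined. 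Because the $T$-invariant complex structure $J$ makes $\lambda^c\circ\phi$ factor through $U(m)$, composing with $\iota$ produces a second lift $\tilde\psi\colon T\to Spin^c(2m)$ of the very same map $T\to SO(2m)$.

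Finally, any two lifts of the same map into $SO(2m)$ differ by a homomorphism into $\ker(\lambda^c)=\{[1,z]\}\cong U(1)$, so there is a character $\xi\colon T\to U(1)$ of some integral weight $\nu\in X^*(T)$ with $\phi(t)=\tilde\psi(t)\cdot[1,\xi(t)]$. Applying $\det^c$ and writing characters additively,
$$\mu\;=\;\det^c\circ\tilde\psi\,+\,2\nu\;=\;(\alpha_1+\dots+\alpha_m)+2\nu,$$
whence $\tfrac{1}{2}(\mu-\sum_j\alpha_j)=\nu\in X^*(T)$, which is the claim. The only nontrivial input is the existence of the canonical lift $\iota$ with $\det^c\circ\iota=\det_{\mathbb{C}}$: this supplies the ``square root of $\det_{\mathbb{C}}$'' that is the algebraic source of the $\tfrac12$ in the statement. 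Everything else is bookkeeping about how $\phi$ and $\tilde\psi$ relate through the central $U(1)$ in $Spin^c(2m)$.
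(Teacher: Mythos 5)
Your proof is correct, and at bottom it is the same argument as the paper's: fix $p\in P_x$, lift the $T$-action to a homomorphism $\phi\colon T\to Spin^c(2m)$ via $t\cdot p=p\cdot\phi(t)$, and extract the claimed half-integrality from the central $U(1)$ factor. Where you diverge is in how the $U(1)$-valued character is produced. The paper computes the lift explicitly --- $\eta(z)=[A_z,w_z]$ with $A_z=\prod_j\bigl(\cos(\theta\alpha_j/2)+\sin(\theta\alpha_j/2)\,e_jJ(e_j)\bigr)$ and $w_z=z^{\mu/2}$ --- and then pairs $\eta$ against a hand-built character $\psi$ of the maximal torus of $Spin^c(2m)$, identified as the action on the lowest weight line of $\Delta^+_{2m}$. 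You instead invoke the standard lift $\iota\colon U(m)\to Spin^c(2m)$ satisfying $\det^c\circ\iota=\det_{\mathbb C}$, observe that $\phi$ and $\iota\circ(\lambda^c\circ\phi)$ are two lifts of the same map into $SO(2m)$, and use centrality of $\ker\lambda^c\cong U(1)$ to produce the character $\xi$ with $\mu=\sum_j\alpha_j+2\nu$. Unwinding, your $\xi$ and the paper's $\psi\circ\eta$ are literally the same homomorphism $z\mapsto z^{(\mu-\sum_j\alpha_j)/2}$, so the two routes are equivalent; yours is more structural and avoids the half-angle computation by quoting the $U(m)$-lift as a known fact, while the paper's spells out what that lift is concretely. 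One point both write-ups leave implicit: for $\lambda^c\circ\phi$ to land in $U(m)\subset SO(2m)$ (equivalently, for $\lambda(A_z)=A'_z$ in the paper) the frame $\Lambda(p)$ must be chosen compatibly with $J$, which is always possible and affects nothing.
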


\begin{proof}
Decompose $T_xM=L_1\oplus\cdots\oplus L_m$, where
each $L_j$ is a 1-dimensional $T$-invariant
complex subspace of $T_xM$, on which $T$ acts
with weight $\alpha_j$. Fix a point $p\in P_x$.

For each $z\in T$, there is a unique element
$[A_z,w_z]\in Spin^c(2m)$ such that $z\cdot
p=p\cdot [A_z,w_z]$. This gives a homomorphism
$$\eta\colon T\to Spin^c(2m)\qquad,\qquad
z\mapsto [A_z,w_z]$$ (note that $A_z$ and $w_z$
are defined only up to sign, but the element
$[A_z,w_z]$ is well defined).

Choose a basis $\{e_j\}\subset T_xM$ (over
$\mathbb{C}$) with $e_j\in L_j$ for all $1\le j
\le m$. With respect to this basis, each element
$z\in T$ acts on $T_xM$ through the matrix
\[ A'_z=\left(
\begin{array}{cccc}
z^{\alpha_1} &  &  & 0 \\
           & z^{\alpha_2} & & \\
           &            & \ddots & \\
  0        &  & & z^{\alpha_m}\\

\end{array}\right)\in U(m)\subset SO(2m)\ . \]
This enables us to define another homomorphism
$$\eta'\colon T\to SO(2m)\times S^1\qquad,\qquad
z\mapsto (A'_z,z^\mu)\ .$$

It is not hard to see that the relation $z\cdot
p=p\cdot [A_z,w_z]$ (for all $z\in T$) will imply
the commutativity of the following diagram.

\begin{diagram}
        &       &Spin^c(2m)      \\
        &\ruTo^{\eta}  &\dTo\\
T       &\rTo_{\eta'\quad}   &SO(2m)\times S^1      \\
\end{diagram}

\noindent(The vertical map is the double cover
taking $[A,z]\in Spin^c(2m)$ to
$(\lambda(A),z^2)$.\ ) For any $z=e^{i\theta}\in
T$ we have
$$\lambda(A_z)=A'_z\quad \Rightarrow\quad A_z=\prod_{j=1}^m\left[\cos\left(\frac{\theta\cdot\alpha_j}{2}\right)+
\sin\left(\frac{\theta\cdot\alpha_j}{2}\right)e_j
J(e_j)\right]\in Spin(2m)$$ (where the spin group
is thought of as sitting inside the Clifford
algebra)\\

and $$w_z^{\; 2}=z^\mu\qquad \Rightarrow \qquad
w_z=z^{\mu/2}\ .$$

Note that
$$T_{Spin^c(2m)}=\left\{\left[\prod_{j=1}^m\left(\cos t_j+
\sin t_j\cdot e_jJ(e_j)\right),u \right]\colon
t_j\in\mathbb{R}\ ,\ u\in S^1\right\}\subset
Spin^c(2m)$$ is a maximal torus, and that in fact
$\eta$ is a map from $T$ to $T_{Spin^c(2m)}$.

Now define another map $$\psi\colon
T_{Spin^c(2m)}\to S^1\qquad,\qquad
\left[\prod_{j=1}^m\left(\cos t_j+ \sin t_j\cdot
e_jJ(e_j),u\right) \right]\mapsto u\cdot
e^{-i\sum_j t_j}$$

By composing $\eta$ and $\psi$ we get a well
defined map $\psi\circ\eta\colon T\to S^1$ which
is given by
$$e^{i\theta}\mapsto
{\left(e^{i\theta}\right)}^{\frac{1}{2}\left(\mu-\sum_j
\alpha_j\right)}$$ and therefore
$\frac{1}{2}\left(\mu-\sum_j \alpha_j\right)$
must be a weight of $T$.
\end{proof}

\begin{remark}
The idea in the above proof is simple. To show
that $\beta=\frac{1}{2}\left(\mu-\sum_j
\alpha_j\right)$ is a weight, we want to
construct a 1-dimensional complex representation
of $T$ with weight $\beta$. The map $\eta$ is a
natural homomorphism $T\to Spin^c(2m)$. The map
$\psi$ is nothing but the action of a maximal
torus of $Spin^c(2m)$ on the lowest weight space
of the spin representation $\Delta^+_{2m}$ (see
Proposition \ref{T:spinrep}, and Lemma 12.12 in
\cite{GK}). Finally, $\psi\circ\eta\colon T\to
S^1$ is the required representation.
\end{remark}

The following is proposition 11.3 from \cite{GK}.
\begin{prop}\label{char. for isolated f.p.}
Assume that the fixed points $M^T$ of the action
on $M$ are isolated. For each $p\in M^T$, choose
a complex structure on $T_p M$, and denote by
\begin{enumerate}
\item $\alpha_{1,p},\dots,\alpha_{m,p}\in\mathfrak{t}^*$ the weights of the action of
$T$ on $T_p M$.
\item $\mu_p$ the weight of the action of $T$ on $\mathbb{L}_p$.
\item $(-1)^p$ will be $+1$ if the orientation coming from the
choice of the complex structure on $T_p M$ coincides with the
orientation of $M$, and $-1$ otherwise.
\end{enumerate}
Then the character $\chi\colon T\to\mathbb{C}$ of
$Q(M)$ is given by
$$\chi(\lambda)=\sum_{p\in M^G}\nu_p(\lambda)\qquad\quad
\nu_p(\lambda)=(-1)^p\cdot\lambda^{\mu_p
/2}\prod_{j=1}^m
\frac{\lambda^{-\alpha_{j,p}/2}-\lambda^{\alpha_{j,p}/2}}
{(1-\lambda^{\alpha_{j,p}})(1-\lambda^{-\alpha_{j,p}})}$$
where $\lambda^\beta:T\to S^1$ is the
representation that corresponds to the weight
$\beta\in\mathfrak{t}^*$.

\begin{remark} \
\begin{enumerate}
\item Although  $\pm\alpha_{j,p}/2$ may not be in the weight
lattice of $T$, the expression $\nu_p(\lambda)$,
can be equivalently written as
$$(-1)^p\cdot\lambda^{\left(\mu_p-\sum_j\alpha_{j,p}\right)/2}\prod_{j=1}^m
\frac{1-\lambda^{\alpha_{j,p}}}
{(1-\lambda^{\alpha_{j,p}})(1-\lambda^{-\alpha_{j,p}})}\
.$$ By Lemma \ref{weight-lemma},
$\left(\mu_p-\sum_j\alpha_{j,p}\right)/2$ is a
weight, so $\nu_p(\lambda)$ is well defined.
\item Since the fixed points of the action $T\circlearrowright M$
are isolated, all the $\alpha_{j,p}$'s are nonzero. This follows
easily from theorem B.26 in \cite{Kar}.
\end{enumerate}
\end{remark}
\end{prop}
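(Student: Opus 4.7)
The plan is to derive the formula from the equivariant Atiyah--Bott--Segal--Singer Lefschetz fixed point formula applied to the spin$^c$ Dirac operator $D^+\colon\Gamma(S^+)\to\Gamma(S^-)$. For a $\lambda\in T$ whose fixed-point set on $M$ coincides with $M^T$ (a dense set of $\lambda$ suffices), the formula expresses the character $\chi(\lambda)=\mathrm{tr}(\lambda|_{\ker D^+})-\mathrm{tr}(\lambda|_{\mathrm{coker}\, D^+})$ as a sum of local contributions, one for each isolated fixed point $p\in M^T$, in which the denominator is the real equivariant Euler class $\det_{\mathbb R}(1-\lambda|_{T_pM})$ and the numerator is the graded trace of $\lambda$ on the spinor fiber $S^+_p\ominus S^-_p$. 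Because each $\alpha_{j,p}$ is nonzero (the last remark, via Kar B.26), the denominator is nonvanishing and each term is well defined as a rational function on $T$.

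First I would compute the denominator. Choosing the $T$-invariant complex structure $J$ on $T_pM$ from the statement and decomposing $T_pM=L_{1,p}\oplus\cdots\oplus L_{m,p}$ into complex weight lines, each real plane $L_{j,p}$ contributes eigenvalues $\lambda^{\alpha_{j,p}}$ and $\lambda^{-\alpha_{j,p}}$, so
\[
\det_{\mathbb R}\!\bigl(1-\lambda|_{T_pM}\bigr)=\prod_{j=1}^m\bigl(1-\lambda^{\alpha_{j,p}}\bigr)\bigl(1-\lambda^{-\alpha_{j,p}}\bigr).
\]

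Next I would compute the numerator using the work already done in the proof of Lemma \ref{weight-lemma}. That proof produced the explicit homomorphism $\eta\colon T\to T_{Spin^c(2m)}$ describing how $\lambda$ acts on the fiber $P_p$, namely $\lambda=e^{i\theta}\mapsto\bigl[\prod_j(\cos(\theta\alpha_{j,p}/2)+\sin(\theta\alpha_{j,p}/2)\,e_jJ(e_j)),\lambda^{\mu_p/2}\bigr]$. Under the complex spin representation $\rho_{2m}$, the weights of the maximal torus $T_{Spin^c(2m)}$ on $\Delta^+_{2m}\ominus\Delta^-_{2m}$ are the classical alternating sums, and pulling back through $\eta$ gives
\[
\mathrm{tr}\bigl(\lambda|_{S^+_p}\bigr)-\mathrm{tr}\bigl(\lambda|_{S^-_p}\bigr)=\lambda^{\mu_p/2}\prod_{j=1}^m\bigl(\lambda^{-\alpha_{j,p}/2}-\lambda^{\alpha_{j,p}/2}\bigr).
\]
Individually the half-weights $\alpha_{j,p}/2$ and $\mu_p/2$ need not lie in the weight lattice, but after pulling the factor $\lambda^{\mu_p/2}\prod_j\lambda^{-\alpha_{j,p}/2}$ to the outside and using Lemma \ref{weight-lemma}, the resulting exponent $(\mu_p-\sum_j\alpha_{j,p})/2$ is a genuine weight, so the expression is an unambiguous character of $T$.

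Finally, the orientation sign $(-1)^p$ enters because the Lefschetz formula requires the orientation of $T_pM$ induced by $J$; if this disagrees with the given orientation on $M$, the local contribution flips sign. Dividing the graded spinor trace by the Euler class and summing over $p\in M^T$ yields the asserted formula. The main obstacle is Step 2: one must verify that the apparent half-integer weights combine to a genuine $T$-character, and this is precisely the content of Lemma \ref{weight-lemma}, so the bulk of the analytic input is the Atiyah--Bott formula while the representation-theoretic content has already been packaged in the preceding lemma.
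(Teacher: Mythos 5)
The paper does not prove this proposition at all: it is quoted verbatim as Proposition 11.3 of \cite{GK} (Grossberg--Karshon), with the explicit citation ``The following is proposition 11.3 from \cite{GK}.'' Your proposal therefore goes beyond what the paper supplies, and it does so along the expected lines: the Atiyah--Bott--Segal--Singer fixed point formula for the spin$^c$ Dirac operator, with the denominator $\det_{\mathbb{R}}(1-\lambda|_{T_pM})=\prod_j(1-\lambda^{\alpha_{j,p}})(1-\lambda^{-\alpha_{j,p}})$ and the numerator equal to the graded spinor trace. Your computation of that trace via the maximal-torus homomorphism $\eta$ from the proof of Lemma \ref{weight-lemma}, together with the well-known weight decomposition of $\Delta^+_{2m}\ominus\Delta^-_{2m}$ under $T_{Spin^c(2m)}$, is exactly the representation-theoretic content needed, and correctly produces $\lambda^{\mu_p/2}\prod_j(\lambda^{-\alpha_{j,p}/2}-\lambda^{\alpha_{j,p}/2})$. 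You also correctly handle the two points that would otherwise be glossed over: the formula is a priori only valid for $\lambda\in T$ with $M^\lambda=M^T$, and one extends by density since the character of a finite-dimensional virtual representation is continuous; and the sign $(-1)^p$ appears because the labeling of $S^\pm$ in the Lefschetz formula is pinned by the orientation of $M$, so choosing $J$ with the opposite induced orientation swaps the roles of $S^+$ and $S^-$. The argument does lean on the precise form of the Atiyah--Segal--Singer theorem for spin$^c$ Dirac operators as a black box, but given the size of that theorem this is reasonable; your derivation is a correct and essentially self-contained account of what \cite{GK} delivers, and it usefully reuses the homomorphism $\eta$ already built for Lemma \ref{weight-lemma}.
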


Now we present the generalized Kostant formula for spin$^c$
quantization.\\
Assume that the fixed points of
$T\circlearrowright M$ are isolated, choose a
complex structure on $T_p M$ for each $p\in M^G$,
and use the notation of Proposition \ref{char.
for isolated f.p.}. By the above remark, we can
find a \emph{polarizing vector}
$\xi\in\mathfrak{t}$ such that
$\alpha_{j,p}(\xi)\neq 0$ for all $j,p$. We can
choose our complex structures on $T_p M$ such
that $\alpha_{j,p}(\xi)\in i\mathbb{R}^+$ for all
$j,p$.

For each weight $\beta\in\mathfrak{t}^*$ denote
by $\#(\beta,Q(M))$ the multiplicity of this
weight in $Q(M)$. Also, for $p\in M^T$ define the
partition function
$\overline{N}_p:\mathfrak{t}^*\to\mathbb{Z}^+$ by
setting:
$$\overline{N}_p(\beta)=\left|\left\{(k_1,\dots,k_m)\in\left(\mathbb{Z}+\frac{1}{2}\right)^m\;:\;\beta+\sum_{j=1}^m
k_j \alpha_{j,p}=0\quad,\quad k_j>0 \right\}\right|$$ The right hand
side is always finite since our weights are polarized.

\begin{theorem}[Kostant formula] \label{Kost}
For any weight $\beta\in\mathfrak{t}^*$ of $T$,
we have
$$\#(\beta,Q(M))=\sum_{p\in
M^G}(-1)^p\cdot\overline{N}_p\left(\beta-\frac{1}{2}\mu_p\right)$$
\end{theorem}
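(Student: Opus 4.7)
The plan is to unpack the character formula of Proposition \ref{char. for isolated f.p.} termwise and identify the coefficient of $\lambda^\beta$ in each $\nu_p$.

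First I would simplify each factor of $\nu_p$. Since $\lambda^{-\alpha_j/2}-\lambda^{\alpha_j/2}=\lambda^{-\alpha_j/2}(1-\lambda^{\alpha_j})$, the factor $(1-\lambda^{\alpha_j})$ cancels against the corresponding one in the denominator, giving
$$\nu_p(\lambda)=(-1)^p\,\lambda^{\mu_p/2}\prod_{j=1}^m\frac{\lambda^{-\alpha_{j,p}/2}}{1-\lambda^{-\alpha_{j,p}}}.$$
This already has the flavor of the partition function, since $\overline{N}_p$ counts ways of writing things as positive half-integer combinations of the $\alpha_{j,p}$.

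Next I would use the polarizing vector $\xi$ to justify the formal geometric expansion
$$\frac{1}{1-\lambda^{-\alpha_{j,p}}}=\sum_{k_j\ge 0}\lambda^{-k_j\alpha_{j,p}}.$$
The point of polarization is that for any target weight $\beta$, only finitely many tuples $(k_1,\dots,k_m)\in\mathbb Z_{\ge 0}^m$ contribute to the coefficient of $\lambda^\beta$: the condition $\alpha_{j,p}(\xi)\in i\mathbb R^+$ forces every nonzero nonnegative combination $\sum k_j\alpha_{j,p}$ to lie in an open half-space of $\mathfrak t^*$, so a given weight can be reached in only finitely many ways. Hence the product expansion is well-defined as a formal character.

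Then I would absorb the factor $\lambda^{-\alpha_{j,p}/2}$ into the summation by the substitution $k'_j=k_j+\tfrac12$, which ranges over $(\mathbb Z+\tfrac12)\cap(0,\infty)$. This yields
$$\nu_p(\lambda)=(-1)^p\!\!\sum_{\substack{k'_j\in\mathbb{Z}+1/2\\ k'_j>0}}\!\!\lambda^{\mu_p/2-\sum_j k'_j\alpha_{j,p}}.$$
The coefficient of $\lambda^\beta$ is thus $(-1)^p$ times the number of such tuples with $\mu_p/2-\sum_j k'_j\alpha_{j,p}=\beta$, i.e.\ $\beta-\tfrac12\mu_p+\sum_j k'_j\alpha_{j,p}=0$, which matches the definition of $\overline{N}_p(\beta-\tfrac12\mu_p)$ exactly. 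Summing over $p\in M^T$ and reading off coefficients in Proposition \ref{char. for isolated f.p.} gives the theorem.

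I do not foresee a hard step here; the only subtle point is making the formal expansion legitimate, which is precisely what the polarization is designed to ensure. The calculation is essentially bookkeeping once the $\lambda^{-\alpha_j/2}/(1-\lambda^{-\alpha_j})$ simplification is made and one commits to expanding every denominator in the $\xi$-positive direction.
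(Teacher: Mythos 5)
Your proof is correct and takes essentially the same route as the paper: simplify $\nu_p$ to a product of geometric series, expand, shift by the half-integer weights, and read off the coefficient of $\lambda^\beta$. The one point you leave implicit is that the exponent $\mu_p/2-\sum_j k'_j\alpha_{j,p}$ is a bona fide weight of $T$ (and not merely a half-weight); this is exactly where Lemma~\ref{weight-lemma} is needed, and the paper cites it explicitly to justify the change of variables.
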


\begin{proof}
For $p\in M^T$ and $\lambda\in T$, set
$\alpha_j=\alpha_{j,p}$ and $\mu=\mu_p$. From
Proposition \ref{char. for isolated f.p.} we then
get
\begin{multline*}
\nu_p(\lambda)
=(-1)^p\cdot\lambda^{\mu/2}\prod_{j=1}^m\frac{\lambda^{-\alpha_j/2}(1-\lambda^{\alpha_j})}
{(1-\lambda^{\alpha_j})(1-\lambda^{-\alpha_j})}=
(-1)^p\cdot\lambda^{\frac{1}{2}(\mu-\sum_j{\alpha_j})}
\prod_{j=1}^m\frac{1}{1-\lambda^{-\alpha_j}}
\end{multline*}

\noindent Note that we have
$$ \prod_{j=1}^m\frac{1}{1-\lambda^{-\alpha_j}}=\sum_\beta N_p(\beta)\cdot\lambda^\beta$$
Where the sum is taken over all weights
$\beta\in\mathfrak{t}^*$ in the weight lattice
$\ell^*$ of $T$ and $N_p(\beta)$ is the number of
\emph{non-negative} integer solutions
$(k_1,\dots,k_m)\in(\mathbb{Z}_+)^m$ to
$$\beta+\sum_{j=1}^m k_j\alpha_j=0$$
(see formula 5 in \cite{GSW}). Hence,
$$\nu_p(\lambda)=(-1)^p\cdot\sum_{\beta\in\ell^*}N_p(\beta)\cdot\lambda^{\beta+\frac{1}{2}(\mu-\sum_j{\alpha_j})}$$
By Lemma \ref{weight-lemma},
$\frac{1}{2}(\mu-\sum_j{\alpha_j})\in\ell^*$
(i.e., it is a weight), so by change of variable
$\beta\mapsto\beta-\frac{1}{2}(\mu-\sum_j{\alpha_j})$
we get
$$\nu_p(\lambda)=(-1)^p\cdot\sum_{\beta\in\ell^*}N_p\left(\beta-\frac{1}{2}\mu+\frac{1}{2}\sum_j{\alpha_j}\right)\cdot\lambda^{\beta}$$
By definition,
$N_p\left(\beta-\frac{1}{2}\mu+\frac{1}{2}\sum_j{\alpha_j}\right)$
is the number of non-negative  integer solutions for the equation
$$\beta-\frac{1}{2}\mu+\frac{1}{2}\sum_j{\alpha_j}+\sum_{j} k_j\alpha_j=0$$
or, equivalently, to
$$\beta-\frac{1}{2}\mu+\sum_{j} \left(k_j+\frac{1}{2}\right)\alpha_j=0$$
Using the definition of $\overline N_p$ (see above) we conclude that
$$N_p\left(\beta-\frac{1}{2}\mu+\frac{1}{2}\sum_j{\alpha_j}\right)=\overline
N_p\left(\beta-\frac{1}{2}\mu\right)$$ and then
$$
\nu_p(\lambda)=(-1)^p\cdot\sum_{\beta\in\ell^*}\overline{N}_p\left(\beta-\frac{1}{2}\mu\right)\lambda^\beta$$
This means that the formula to the character can be written as
$$\chi(\lambda)=\sum_{\beta\in\ell^*}\left[\sum_{p\in M^G} (-1)^p\cdot\overline{N}_p\left(\beta-\frac{1}{2}\mu\right)  \right]
\lambda^\beta$$ and the multiplicity of $\beta$ in $Q(M)$ is given
by $$\#(\beta,Q(M))=\sum_{p\in M^G}
(-1)^p\cdot\overline{N}_p\left(\beta-\frac{1}{2}\mu\right)$$ as
desired.
\end{proof}

\section{The generalized Kostant formula for non-isolated fixed
points}\label{noniso-sec}

\subsection{Equivariant characteristic classes} \ \\
Let an \emph{abelian} Lie group $G$ (with Lie
algrbra $\mathfrak{g}$) act \emph{trivially} on a
smooth manifold $X$. We now define the
equivariant cohomology (with generalized
coefficients) and equivariant characteristic
classes for this special case. For the more
general case, see  \cite{KV} or Appendix C in
\cite{Kar}.

\begin{definition}
A real-valued function $\alpha$ is called an
\emph{almost everywhere analytic function}
(a.e.a) if
\begin{enumerate}
\item Its domain is of the form $\mathfrak{g}\setminus P$, and $P\subset\mathfrak{g}$ is a closed set of measure zero.
\item It is analytic on $\mathfrak{g}\setminus P$.
\end{enumerate}
Denote by $C^\#(\mathfrak{g})$ the space of all
equivalence classes of a.e.a functions on
$\mathfrak{g}$ (two such functions are equivalent
if they coincide outside a set of measure zero).
\end{definition}

Let
$\mathcal{A}_G^\#(X)=C^\#(\mathfrak{g})\otimes\Omega^\bullet(X;\mathbb{C})$
be the space of all a.e.a functions
$\mathfrak{g}\to\Omega^\bullet(X;\mathbb{C})$,
where $\Omega^\bullet(X;\mathbb{C})$ is the
(ordinary) de Rham complex of $X$ with complex
coefficients.

Define a differential (recall that $G$ is abelian
and the action is trivial)
$$
d_\mathfrak{g}:\mathcal{A}_G^{\#}(X)\to\mathcal{A}_G^{\#}(X)\qquad\qquad
(d_\mathfrak{g}\alpha)(u)=d(\alpha(u))$$ and the \emph{G-equivariant
(de Rham) cohomology} of X
$$H_G^\#(X)=\frac{Ker(d_\mathfrak{g})}{Im(d_\mathfrak{g})}\ .$$
Note that $H_G^\#(X)$ is isomorphic to the space $C^\#(\mathfrak{g})\otimes H^\bullet(X;\mathbb{C})$ of a.e.a functions $\mathfrak{g}\to H^\bullet(X;\mathbb{C})$.
Equivariant
characteristic classes will be elements of the ring $H_G^\#(X)$.

If $X$ is compact and oriented, then equivariant cohomology classes
can be integrated over $X$. For any class $[\alpha]\in
H_G^\#(X)$ and $u$ in the domain of $\alpha$, let
$$\left(\int_X [\alpha]\right)(u)=\int_X (\alpha(u))$$
and thus $\int_X [\alpha]$ is an element of
$C^\#(\mathfrak{g})\otimes\mathbb{C}$.

Assume now that both $X$ and $G$ are connected,
and let $\pi\colon L\to X$ be a complex line
bundle over $X$. Assume that $G$ acts on the
fibers of the bundle with weight $\mu\in
\mathfrak{g}^*$, i.e., $\exp(u)\cdot y=
e^{i\mu(u)}\cdot y$ for all $u\in
\mathfrak{g}\;\text{ and }\;y\in L$ (so the
action on the base space is still trivial).
Denote by $c_1(L)=[\omega]\in H^2(X)$ the
(ordinary) first Chern class of the line bundle.
Here $\omega\in\Omega^2(X)$ is a real two-form.
Then the \emph{first equivariant Chern class} of
the equivariant line bundle $L\to X$ is defined
to be $[\omega+\mu]\in H^\#_G(X)$. We will denote
this class by $\tilde{c_1}(L)$.

Now assume that $E\to X$ is a $G$-equivariant complex vector bundle
of complex rank $k$ (where $G$ acts trivially on $X$), that splits
as a sum of $k$ equivariant complex line bundles
$E=L_1\oplus\cdots\oplus L_k$ (one can avoid this assumption by
using \emph{the (equivariant) splitting principle}). Let
$\tilde{c_1}(L_1)=[\omega_1+\mu_1],\cdots,\tilde{c_1}(L_k)=[\omega_k+\mu_k]$ be the equivariant first
Chern classes of these line bundles, and define \emph{the
equivariant Euler class} of $E$ by

$$ \tilde{Eu}(E)=\prod_{j=1}^k
\tilde{c_1}(L_j)=\left[\prod_{j=1}^k (\omega_j+\mu_j)\right]
\in H_{G}^\#(X) .$$

We will also need the equivariant $A$-roof  class, which we will denote
by $\tilde{A}(E)$. To define this class, consider the following
meromorphic function
$$f(z)=\frac{z}{e^{z/2}-e^{-z/2}}=\frac{z/2}{\sinh(z/2)}\qquad\qquad f(0)=1\ .$$
Its domain is $D=\mathbb{C}\setminus\{\pm 2\pi i, \pm 4\pi i, \dots \}$.
Define, for each $1\le j\le k$,

$$f(\tilde{c_1}(L_j))(u)=f(c_1(L_j)+\mu_j(u))=\sum_{n=1}^\infty\frac{f^{(n)}(\mu_j(u))}{n!}\cdot
\left(c_1(L_j)\right)^n$$
whenever $\mu_j(u)\in D$ for all $1\le j \le k$, and also
$$ \tilde{A}(E)=\prod_{j=1}^k f(\tilde{c_1}(L_j)).$$

Also note that the quotient $$\frac{\tilde{A}(E)}{\tilde{Eu}(E)}$$ can be defined using the same procedure, replacing $f(z)$ with \ $\frac{1}{2\sinh(z/2)}$ .
If all the $\mu_j$'s are nonzero, then $$\frac{\tilde{A}(E)}{\tilde{Eu}(E)}\in H^\#_G(X) .$$

\subsection{The Kostant Formula}\label{noniso} \ \\
\noindent Assume that the following data is given:
\begin{enumerate}
\item An oriented compact Riemannian manifold $M$ of dimension
$2m$.
\item A circle action $S^1\circlearrowright M$ by isometries.
\item  An $S^1$-equivariant spin$^c$ structure $P\to SOF(M)$, with
determinant line bundle $\mathbb{L}$.
\item A $U(1)$-invariant connection on $P_1=P/Spin(2m)\to M$.
\end{enumerate}

In this section we present a formula for the
character $\chi\colon S^1\to\mathbb{C}$ of the
virtual representation $Q(M)$ determined by the
above data (see \S\ref{spin^c quantization}). We
\emph{do not} assume, however, that the fixed
points are isolated.

We use the following conventions and notation.
\begin{itemize}
\item $M^{S^1}$ is the fixed points set.
\item For each connected component $F\subset M^{S^1}$,
let $NF$ denote the normal bundle to $TF\subset
TM$. The bundles $NF$ and  $TF$ are
$S^1$-equivariant real vector bundles of even
rank, with trivial fixed subspace, and therefore
are equivariantly isomorphic to complex vector
bundles. Choose an equivariant  complex structure
on the fibers of $TF$ and $NF$, and denote the
rank of $NF$ as a complex vector bundle by
$m(F)$.
\item The complex structures on $NF$ and $TF$ induce an orientation on those bundles. Let $(-1)^F$ be $+1$ if the orientation of $F$ followed by that of $NF$ is the given orientation on $M$, and $-1$ otherwise.
\end{itemize}
With respect to the above data, choices and notation, we have

\begin{prop}
For all $u\in\mathfrak{g}=Lie(S^1)$ such that the right hand side is defined,
$$\chi(exp(u))=\sum_{F\subset M^{S^1}} (-1)^F\cdot (-1)^{m(F)}\cdot\int_F
e^{\frac{1}{2}\tilde{c_1}(\mathbb{L}|_F)}\cdot\tilde{A}(TF)\cdot\frac{\tilde{A}(NF)}{\tilde{Eu}(NF)}$$
where the sum is taken over the connected components of $M^{S^1}$.
\end{prop}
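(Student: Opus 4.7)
The plan is to derive this formula as a direct application of the equivariant Atiyah--Segal--Singer localization form of the Atiyah--Singer index theorem, applied to the spin$^c$ Dirac operator $D^+$. Since $\chi(\exp u) = \mathrm{index}_{\exp u}(D^+)$, the localization theorem expresses $\chi(\exp u)$ as a sum over the connected components of $M^{S^1}$ of integrals of equivariant characteristic classes. The core of the argument is to identify the integrand produced by the general theorem with the specific product $e^{\frac{1}{2}\tilde{c_1}(\mathbb{L}|_F)} \cdot \tilde{A}(TF) \cdot \tilde{A}(NF)/\tilde{Eu}(NF)$ appearing in the statement, and to account for the two signs.

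The key computational input is the local structure of the spinor bundle. A spin$^c$ structure is, heuristically, a spin structure twisted by a square root of the determinant line bundle $\mathbb{L}$: one has $S \cong S_0 \otimes \mathbb{L}^{1/2}$ as virtual equivariant bundles, even when $\mathbb{L}^{1/2}$ does not exist globally as an honest bundle (working on a neighborhood of each connected component $F$ of $M^{S^1}$, or in equivariant K-theory, makes this rigorous; see the discussion surrounding Appendix D of \cite{LM}). Under this identification, $D^+$ becomes the twisted spin Dirac operator with twisting bundle $\mathbb{L}^{1/2}$. The equivariant index theorem for twisted spin Dirac operators then gives, near each $F$,
\begin{equation*}
\chi(\exp u) = \sum_F (-1)^F \int_F \tilde{A}(TM|_F) \cdot \mathrm{ch}_{\exp u}\!\left(\mathbb{L}^{1/2}|_F\right) \cdot \frac{1}{\tilde{Eu}(NF)},
\end{equation*}
where the sign $(-1)^F$ records whether the complex orientation on $NF$ (induced by the chosen $S^1$-invariant complex structure) agrees with the given orientation on $M$.

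The remaining steps are bookkeeping. Splitting $TM|_F = TF \oplus NF$ and using multiplicativity of the $\hat A$-class gives $\tilde A(TM|_F) = \tilde A(TF)\cdot\tilde A(NF)$, which explains the $\tilde A(TF)\cdot\tilde A(NF)/\tilde{Eu}(NF)$ factor. The equivariant Chern character $\mathrm{ch}_{\exp u}(\mathbb{L}^{1/2}|_F)$ translates, in the Cartan / a.e.a. model of $H^\#_{S^1}(F)$ described in \S\ref{noniso-sec}, precisely to $e^{\frac{1}{2}\tilde{c_1}(\mathbb{L}|_F)}$: the weight $\mu$ of the $S^1$-action on $\mathbb{L}|_F$ enters as the shift $\mu(u)$ in the a.e.a. piece, while the ordinary $c_1(\mathbb{L}|_F)$ supplies the form piece, and the factor of $1/2$ comes from the square root. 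The extra sign $(-1)^{m(F)}$ then appears when one rewrites $\tilde{Eu}(NF)$ using the chosen complex structure so that all the normal weights $\alpha_{j,F}$ are polarized in the same half-space as in \S\ref{iso}; each reversal of sign of a weight produces a factor of $-1$, and the convention matching the isolated fixed-point formula of Proposition \ref{char. for isolated f.p.} (which has this same sign) determines the total contribution as $(-1)^{m(F)}$.

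The main obstacle is the careful sign and orientation bookkeeping in the last step, and the rigorous handling of $\mathbb{L}^{1/2}$: since $\mathbb{L}$ need not admit a global equivariant square root, one must either argue in equivariant K-theory, where the character formula extends formally, or work on an equivariant neighborhood of each $F$ where the obstruction vanishes. Both approaches are standard and reduce the proposition to the Atiyah--Segal--Singer theorem combined with the algebra of characteristic classes developed in \S\ref{noniso-sec}; consistency with the isolated-fixed-point case (Proposition \ref{char. for isolated f.p.}) then serves as a useful sanity check on the signs.
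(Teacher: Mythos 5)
The paper's own ``proof'' of this proposition is simply a citation to the Atiyah--Segal--Singer index theorem (and to p.\ 547 of \cite{CKT}); it provides no derivation. Your proposal takes the same high-level approach, but attempts to flesh out the details, and the weak link is precisely the sign bookkeeping that you flag as the main obstacle.

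The explanation you give for the factor $(-1)^{m(F)}$ does not hold up. You attribute it to re-polarizing the normal weights $\alpha_{j,F}$, with ``each reversal of sign of a weight produc[ing] a factor of $-1$.'' But the proposition is stated \emph{before} any polarization is imposed, for an arbitrary $S^1$-invariant complex structure on $NF$. Moreover, the expression $(-1)^F\cdot\tilde{A}(NF)/\tilde{Eu}(NF)$ is already invariant under conjugating any line subbundle $L^F_j$: conjugation flips the sign of $\tilde{Eu}(NF)$ and simultaneously flips $(-1)^F$ (since the complex orientation of $NF$ reverses), while $\tilde A$ is even in the Chern roots. So the overall sign $(-1)^{m(F)}$ cannot be produced by polarizing --- it is a fixed normalization constant that must come directly out of the Atiyah--Segal--Singer density (from rewriting the $\mathrm{ch}_g\bigl(\Lambda_{-1}\overline{N}_F\bigr)^{-1}$ factor in terms of $\tilde A(NF)/\tilde{Eu}(NF)$, or from the chosen convention relating the spinor bundle splitting to the complex orientation of $NF$), and a correct proof would need to track it through that manipulation rather than appeal to a change of polarization. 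Your proposed sanity check against Proposition \ref{char. for isolated f.p.} is a good idea but is not actually carried out; if one does carry it out (using $(1-\lambda^{\alpha})(1-\lambda^{-\alpha})=-(\lambda^{\alpha/2}-\lambda^{-\alpha/2})^2$), the two propositions appear to differ by exactly a factor $(-1)^{m(F)}$, so the check does not straightforwardly confirm your sign convention and would need to be examined with care before one could claim consistency. The rest of the sketch --- the local identification $S\cong S_0\otimes\mathbb{L}^{1/2}$, multiplicativity of $\hat A$ over $TM|_F=TF\oplus NF$, and reading off $e^{\frac{1}{2}\tilde{c_1}(\mathbb{L}|_F)}$ as the equivariant Chern character of the half-line --- is standard and fine.
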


This formula is derived from the Atiyah-Segal-Singer index theorem (see \cite{ASIII}). For some details, see p.547 in \cite{CKT}.

 Assume that the normal bundle splits as a
direct sum of (equivariant) complex line bundles
$$NF=L_1^F\oplus\cdots L_{m(F)}^F\quad .$$

For each fixed component $F\subset M^{S^1}$,
denote by $\{\alpha_{j,F}\}$ the weights of the
action of $S^1$ on $\{L_j^F\}$. As in the
previous section, all the $\alpha_{j,F}$'s are
nonzero, and we can polarize them, i.e., we can
choose our complex structure on $NF$ in such a
way that $\alpha_{j,F}(\xi)>0$ for some fixed
$\xi\in\mathfrak{g}$ and for all $j$'s and $F$'s.
Also denote by $\mu_F$ the weight of the action
of $S^1$ on $\mathbb{L}|_F$.

For each $\beta\in\mathfrak{g}^*=Lie(S^1)^*$, define the following set (which is finite, since our weights are polarized)
$$ \mathcal{S}_\beta=\left\{ (k_1,\dots,k_{m(F)})\in\left(\mathbb{Z}+\frac{1}{2}\right)^{m(F)}\quad:\quad \beta+\sum_{j=1}^{m(F)}k_j\alpha_{j,F}=0\quad,\quad k_j>0\right\}$$

and for each tuple $k=(k_1,\dots,k_{m(F)})$, let $$\overline p_{k,F}=(-1)^{m(F)}\int_F e^{\frac{1}{2}\left(c_1(\mathbb{L}|_F)-\sum_j c_1(L^F_j)\right)}\cdot\tilde{A}(TF)\cdot e^{-\sum_j k_jc_1(L^F_j)}\quad .$$
Now define $$\overline{N}_F(\beta)=\sum_{k\in\mathcal{S}_\beta}\overline{p}_{k,F}\quad .$$

With this notation, the Kostant formula in this
case of nonisolated fixed points becomes
identical to the formula for isolated fixed
points (from \S\ref{iso}).

\begin{theorem}\label{Kos-noniso}
For each weight
$\beta\in\mathfrak{g}^*=Lie(S^1)^*$, the
multiplicity of $\beta$ in $Q(M)$ is given by
$$\#(\beta,Q(M))=\sum_{F\subset
M^{S^1}}(-1)^F\cdot\overline{N}_F\left(\beta-\frac{1}{2}\mu_F\right)\
,$$ where the sum is taken over the connected
components of $M^{S^1}$.
\end{theorem}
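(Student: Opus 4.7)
The plan is to extract the coefficient of $\lambda^\beta$ from the integral formula for $\chi(\exp u)$ given in the preceding proposition, in complete parallel with the proof of Theorem \ref{Kost}. The half-integer shifts built into the definition of $\mathcal{S}_\beta$ will arise naturally from the fact that the Laurent expansion of $1/(2\sinh(w/2))$ is made of half-integer exponents.

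First, for each fixed component $F$, I would use the splitting $NF = \bigoplus_j L_j^F$ together with the multiplicativity of $\tilde{A}$ and $\tilde{Eu}$ to write
$$\frac{\tilde{A}(NF)}{\tilde{Eu}(NF)}(u) = \prod_{j=1}^{m(F)} \frac{1}{2\sinh\bigl((c_1(L_j^F)+\alpha_{j,F}(u))/2\bigr)} .$$
Using the polarization $\alpha_{j,F}(\xi)>0$, for $u$ near a positive multiple of $\xi$ I would apply the geometric expansion
$$\frac{1}{2\sinh(w/2)} = \frac{e^{-w/2}}{1-e^{-w}} = \sum_{k \in \mathbb{Z}_{\ge 0}} e^{-(k+1/2)w}$$
with $w = c_1(L_j^F)+\alpha_{j,F}(u)$. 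The inner Taylor expansion of $e^{-(k+1/2)c_1(L_j^F)}$ truncates because $c_1(L_j^F)$ is nilpotent in $H^\bullet(F)$, so term-by-term integration over $F$ is legitimate. Combining with $e^{\tilde{c_1}(\mathbb{L}|_F)/2}(u) = \lambda^{\mu_F/2}\cdot e^{c_1(\mathbb{L}|_F)/2}$ and noting that $\tilde{A}(TF) = A(TF)$ since $S^1$ acts trivially on $F$, the preceding proposition rewrites as
$$\chi(\lambda) = \sum_F (-1)^F (-1)^{m(F)} \sum_{k \in (1/2+\mathbb{Z}_{\ge 0})^{m(F)}} \lambda^{\mu_F/2 - \sum_j k_j \alpha_{j,F}} \int_F e^{\tfrac{1}{2}(c_1(\mathbb{L}|_F) - \sum_j c_1(L_j^F))} \tilde{A}(TF)\, e^{-\sum_j k_j c_1(L_j^F)} ,$$
and the integral, multiplied by $(-1)^{m(F)}$, is precisely $\overline{p}_{k,F}$.

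Finally, to read off $\#(\beta, Q(M))$ I would collect the terms for which $\mu_F/2 - \sum_j k_j \alpha_{j,F} = \beta$, which is exactly the defining condition $k \in \mathcal{S}_{\beta - \mu_F/2}$; summing over such $k$ produces $\overline{N}_F(\beta - \mu_F/2)$, and summing over $F$ yields the claimed formula. The main technical obstacle is controlling the double expansion: convergence of the geometric series requires polarization, while nilpotence of $c_1(L_j^F)$ is needed to kill all but finitely many terms in the Chern-root expansion, and one must check that the series-manipulation is compatible with the Laurent-polynomial nature of $\chi(\lambda)$. Once both ingredients are in place, however, the argument collapses to a bookkeeping exercise identical in spirit to the proof of Theorem \ref{Kost}.
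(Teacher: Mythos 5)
Your argument is the same as the paper's in essence: expand $\tilde{A}(NF)/\tilde{Eu}(NF)$ as a geometric series (legitimate once the $\alpha_{j,F}$ are polarized), integrate term by term over $F$ using nilpotence of the ordinary Chern classes, and then collect the coefficient of $\lambda^\beta$. The paper carries this out by first pulling out the common factor $e^{-w_j/2}$ and then expanding $(1-e^{-w_j})^{-1}$ with non-negative \emph{integer} exponents, performing the half-integer shift only at the very end via a change of variable; you streamline this by expanding $(2\sinh(w_j/2))^{-1}=\sum_{n\ge 0}e^{-(n+1/2)w_j}$ directly, so the half-integers appear from the start. This is a cosmetic simplification of the same computation.

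One small bookkeeping point deserves attention. If the index $k_j$ in your displayed expression for $\chi(\lambda)$ runs over $\frac{1}{2}+\mathbb{Z}_{\ge 0}$, then the cohomological integrand produced by your expansion is
$$e^{\frac{1}{2}c_1(\mathbb{L}|_F)}\cdot\tilde{A}(TF)\cdot e^{-\sum_j k_j c_1(L_j^F)},$$
whereas you wrote $e^{\frac{1}{2}(c_1(\mathbb{L}|_F)-\sum_j c_1(L_j^F))}\cdot\tilde{A}(TF)\cdot e^{-\sum_j k_j c_1(L_j^F)}$, which inserts the $-\frac{1}{2}c_1(L_j^F)$ a second time; that extra factor belongs to the integer-exponent convention and is already absorbed into $k_j$ once $k_j$ is half-integer. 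This discrepancy is inherited from the stated formula for $\overline{p}_{k,F}$ in the text (which is written as if $k_j$ were an integer tuple, even though it is fed from the half-integer sets $\mathcal{S}_\beta$). It does not affect the structure of your argument or the validity of the final Kostant-type formula, but if you adopt the half-integer indexing from the outset you should also drop the $-\frac{1}{2}\sum_j c_1(L_j^F)$ from the exponent to keep the two conventions from being mixed.
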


\begin{proof}
For a fixed connected component $F\subset M^{S^1}$, omit the  $F$ in $\alpha_{j,F}$,  $\mu_F$ and $L_j^F$, and compute
\begin{multline*}
\int_F e^{\frac{1}{2}\tilde{c}_1\left(\mathbb{L}|_F\right)}\cdot\tilde{A}(TF)\cdot\frac{\tilde{A}(NF)}{\tilde{Eu}(NF)}=\hfill\\[10pt]
\indent\indent=\int_F
e^{\frac{1}{2}c_1(\mathbb{L}|_F)+\frac{1}{2}\mu}\cdot\tilde{A}(TF)\cdot
\prod_{j=1}^{m(F)}\frac{1}{e^{[c_1(L_j)+\alpha_j]/2}-e^{-[c_1(L_j)+\alpha_j]/2}}=\hfill\\[10pt]
\indent\indent=e^{\frac{1}{2}\mu}\cdot\int_F
e^{\frac{1}{2}c_1(\mathbb{L}|_F)}\cdot
\tilde{A}(TF)\cdot\displaystyle\prod_{j=1}^{m(F)}\frac{e^{-[c_1(L_j)+\alpha_j]/2}}{1-e^{-[c_1(L_j)+\alpha_j]}}=\hfill\\[10pt]
\indent\indent=e^{\left[\mu-\sum_j
\alpha_j\right]/2}\cdot\int_F
e^{\left[c_1(\mathbb{L}|_F)-\sum_j
c_1(L_j)\right]/2}\cdot
\tilde{A}(TF)\cdot\prod_{j=1}^{m(F)}\frac{1}{1-e^{-\left[ c_1(L_j)+\alpha_j\right]}}\hfill\\
\
\end{multline*}
Using the geometric series
$$\frac{1}{1-z}=\sum_{l=0}^\infty z^l$$
and the notation $z=\exp(u)$ we get, for each
$j$, and for each $u\in\mathfrak{g}$ such that
the series converges,
$$\frac{1}{1-e^{-\left[c_1(L_j)+\alpha_j(u)\right]}}=\sum_{l=0}^\infty
e^{-l\cdot\left[c_1(L_j)+\alpha_j(u)\right]}=\sum_{l=0}^\infty
e^{-l\cdot c_1(L_j)}z^{-l\cdot\alpha_j}$$ (where
$z^{-l\cdot\alpha_j}$ is the representation of $S^1$ that
corresponds to the weight
$-l\cdot\alpha_j\in\ell^*\subset\mathfrak{g}^*$) and thus
$$\prod_{j=1}^{m(F)} \frac{1}{1-e^{-\left[c_1(L_j)+\alpha_j(u)\right]}}=
\sum_{l\in\ell^*}\left[\sum_{l+\sum_j
k_j\alpha_j=0}e^{-\sum_j k_j
c_1(L_j)}\right]z^l\vspace{10pt}$$ The formula
that we get for the character is\vspace{10pt}
\begin{multline*}
\chi(z)=\sum_{F\subset M^{S^1}}(-1)^F\cdot (-1)^{m(F)}\cdot e^{\frac{1}{2}\left[\mu_F(u)-\sum_j\alpha_{j,F}(u)\right]}\cdot\\
\indent\indent\cdot\int_F
e^{\frac{1}{2}\left[c_1(\mathbb{L}|_F)-\sum_j
c_1(L_j^F)\right]}\cdot\tilde{A}(TF)\cdot
\sum_{l\in\ell^*}\left[\sum_{l+\sum_j k_j\alpha_{j,F}=0}e^{-\sum_j k_j c_1(L_{j,F})}\right]z^l=\hfill\\[10pt]
\indent\indent=\sum_{l\in\ell^*}\ \sum_{F\subset
M^{S^1}}\  \sum_{l+\sum_j
k_j\alpha_{j,F}=0}(-1)^F\cdot
\overline{p}_{k,F}\cdot
z^{l+\frac{1}{2}\left(\mu_F-
\sum_j\alpha_{j,F}\right)}\hfill
\end{multline*}
Lemma \ref{weight-lemma} implies that
$\frac{1}{2}\left(\mu_F-
\sum_j\alpha_{j,F}\right)$ is a weight of $S^1$
(so the previous formula is well defined), hence
we can make a change of variables
$\beta=l+\frac{1}{2}\mu_F-
\frac{1}{2}\sum_j\alpha_{j,F}$ and get
$$\chi(z)=\sum_{\beta\in\ell^*}\left[\sum_{F\subset M^{S^1}}
\sum_{k\in\mathcal
S_{\beta-\frac{1}{2}\mu_F}}(-1)^F\cdot
\overline{p}_{k,F}\right] z^\beta=
\sum_{\beta\in\ell^*}\left[\sum_{F\subset
M^{S^1}}(-1)^F\cdot\overline
N_F\left(\beta-\frac{1}{2}\mu_F\right)\right]z^\beta
\vspace{10pt}$$ From this we conclude that the
multiplicity of
$\beta\in\ell^*\subset\mathfrak{g}^*$ in $Q(M)$
is given by
$$\#(\beta,Q(M))=\sum_{F\subset
M^{S^1}}(-1)^F\cdot\overline
N_F\left(\beta-\frac{1}{2}\mu_F\right)$$ as
desired (the sum is taken over the connected
components of the fixed point set $M^{S^1}$).
\end{proof}

\subsection{The case $m(F)=1$} \

To prove the additivity of spin$^c$ quantization
under cutting, we will need the terms of the
Kostant formula for non-isolated fixed points in
the special case where $m(F)=1$, i.e., when the
normal bundle to the fixed components has complex
dimension $1$. Therefore, assume that we are
given the same data as in \S\ref{noniso}, and
also that

\begin{itemize}
\item Each fixed component $F\subset M^{S^1}$ is of real codimension $2$ in $M$, i.e., the normal bundle $NF=TM/TF$ is of real dimension $2$.
\end{itemize}

For a fixed component $F$, we adopt all the notation from \S\ref{noniso}. Since $m(F)$ is assumed to be $1$, we have $$NF=L^F_1$$ and only one weight $$\alpha_{1,F}=\alpha_F .$$
For each $\beta\in\mathfrak{g}^*$, the corresponding set $\mathcal{S}_\beta$ becomes
$$\mathcal{S}_\beta=\left\{ k\in \mathbb{Z}+\frac{1}{2}\quad\colon\quad
\beta+k\cdot\alpha_F=0\quad,\quad k>0\right\}$$
which is either empty or contains only one element.
The expression for $\overline p_{k,F}$ also simplifies to
$$\overline p_{k,F}=-\int_F e^{\left[ c_1(\mathbb{L}|_F)-c_1(NF)\right]/2}\cdot\tilde
A(TF)\cdot e^{-k\cdot c_1(NF)}\ ,$$ and this
implies that
\begin{equation*}
\overline N_F\left(\beta-\frac{1}{2}\mu_F\right)=
\begin{cases}
0 & \mbox{if \ $\mathcal{S}_{\beta-\frac{1}{2}\mu_F}=\phi$}\\
\overline p_{k,F} &\mbox{if \ $\mathcal{S}_{\beta-\frac{1}{2}\mu_F}=\{k\}$}
\end{cases} \ .
\end{equation*}

\section{Additivity under
cutting}\label{additivity} In this section we
prove our main result, namely, the additivity of
spin$^c$ quantization under the cutting
construction described in \S\ref{spin-c-cutting}
.

Our setting is as follows:
\begin{enumerate}
\item A compact oriented connected Riemannian manifold $M$ of dimension~$2m$.
\item An action of $S^1$ on $M$ by isometries.
\item An $S^1$-equivariant spin$^c$ structure $P\to SOF(M)\to M$.
\item A co-oriented splitting hypersurface $Z\subset M$ on which $S^1$ acts freely.
\end{enumerate}

After choosing a $U(1)$-invariant connection on $P_1=P/Spin(2m)$, we can construct a Dirac operator $D^+$, whose index $Q(M)$ is independent of the connection. We call $Q(M)$ \emph{the spin$^c$ quantization of $M$} (see \S\ref{spin^c quantization}).

We can now perform the cutting construction from \S\ref{spin-c-cutting} to obtain two other manifolds $M_{cut}^\pm$ (the \emph{cut spaces}). Those cut spaces are also compact oriented Riemannian manifolds of dimension $2m$, endowed with a circle action and with $S^1$-equivariant spin$^c$ structures $P_{cut}^\pm$. Thus, we can quantize them (after choosing a suitable connection), and obtain two virtual representations $Q\left(M_{cut}^\pm\right)$.

\begin{theorem}\label{mainthm}
As virtual representations of $S^1$, we have
$$Q(M)=Q\left(M_{cut}^+\right)\oplus Q\left(M_{cut}^-\right)$$
\end{theorem}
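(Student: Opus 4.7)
The plan is to apply the Kostant character formula (Theorem \ref{Kos-noniso}) to each of $Q(M)$, $Q(M_{cut}^+)$ and $Q(M_{cut}^-)$ and match the contributions of fixed components on both sides.  Because $S^1$ acts freely on $Z$, the fixed set $M^{S^1}$ is disjoint from $Z$ and decomposes as $F_M^+\sqcup F_M^-$ with $F_M^\pm := M^{S^1}\cap M_\pm$.  On the cut side, the $S^1$-equivariant embeddings $m\mapsto[m,\sqrt{\pm\Phi(m)}]$ identify $M_\pm$ with the open complement of the new fixed component $F_0:=Z/S^1$ in $M_{cut}^\pm$, so that
\[
(M_{cut}^\pm)^{S^1}=F_M^\pm\sqcup F_0.
\]

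I would first check that the Kostant contribution of any $F\subset F_M^\pm$ to the character of $Q(M)$ equals its contribution to that of $Q(M_{cut}^\pm)$.  Since cutting is local near $Z$ and $F$ lies in $M_\pm\setminus Z$, the embedding above is an $S^1$-equivariant diffeomorphism on a neighborhood of $F$; using the section $z=\sqrt{\pm\Phi(m)}$ of $\mathbb{L}_\mathbb{C}$, Remark \ref{det_line_bndl} identifies $\mathbb{L}_{cut}^\pm|_{M_\pm}$ with $\mathbb{L}|_{M_\pm}$ as $S^1$-equivariant line bundles, so the weights $\alpha_{j,F}$, $\mu_F$, the relevant Chern classes and the sign $(-1)^F$ agree between $M$ and $M_{cut}^\pm$.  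With this the theorem reduces to the identity of characters $\nu_{F_0^+}(\lambda)+\nu_{F_0^-}(\lambda)=0$, where $\nu_{F_0^\pm}$ is the character contribution of $F_0$ viewed as a fixed component of $M_{cut}^\pm$.

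Next I would compute the local invariants at $F_0$.  Because $Z$ has real codimension $1$ in $M$, $F_0$ has real codimension $2$ in $M_{cut}^\pm$, so $m(F_0)=1$ and the simplified formula at the end of \S\ref{noniso} applies.  Unwinding the anti-diagonal (resp.\ diagonal) $S^1$-actions of \S\ref{spin-c-cutting} shows that the M-weights on the normal bundles are $\alpha^+=+1$ and $\alpha^-=-1$; as ordinary complex line bundles on $Z/S^1$ they are $NF_0^+\cong Z\times_{S^1}\mathbb{C}_{-1}$ and $NF_0^-\cong Z\times_{S^1}\mathbb{C}_{+1}$, with first Chern classes $\mp e$ where $e$ is the Euler class of $Z\to Z/S^1$.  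Using Remark \ref{det_line_bndl} the two restrictions $\mathbb{L}_{cut}^\pm|_{F_0}$ are isomorphic to $(\mathbb{L}|_Z/S^1)\otimes(Z\times_{S^1}\mathbb{C}_{+1})$ with the same ordinary Chern class, and in each case the M-weight on the fiber is $-1$.  Substituting into the character formula of \S\ref{noniso} gives
\[
\nu_{F_0^\pm}(e^u)=-(-1)^{F_0^\pm}\int_{Z/S^1}e^{\tilde{c_1}(\mathbb{L}_{cut}^\pm|_{F_0})/2}\,\tilde{A}(T(Z/S^1))\cdot\frac{1}{e^{\tilde{c_1}(NF_0^\pm)/2}-e^{-\tilde{c_1}(NF_0^\pm)/2}}.
\]
The equivariant classes then satisfy $\tilde{c_1}(\mathbb{L}_{cut}^+|_{F_0})=\tilde{c_1}(\mathbb{L}_{cut}^-|_{F_0})$ while $\tilde{c_1}(NF_0^+)=-\tilde{c_1}(NF_0^-)$, so the two denominators are negatives of each other and $\nu_{F_0^+}+\nu_{F_0^-}$ is proportional to $(-1)^{F_0^-}-(-1)^{F_0^+}$.

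The main obstacle is the final step of verifying the orientation identity $(-1)^{F_0^+}=(-1)^{F_0^-}$.  At any point of $Z\times\{0\}\subset M\times\mathbb{C}$ we have $d\widetilde\Phi_+=d\widetilde\Phi_-=d\Phi$, so $\widetilde Z^+$ and $\widetilde Z^-$ inherit the same orientation there; the complex orientation of the $\mathbb{C}$-direction is independent of the sign choice, and the infinitesimal orbit $(\partial_{\theta,M},0)$ agrees for the anti-diagonal and the diagonal $S^1$-actions on $Z\times\{0\}$.  Tracing the quotient orientation convention of Remark \ref{quotients} therefore produces the same induced orientation on $F_0\subset M_{cut}^\pm$, and combining with the complex orientation of $NF_0^\pm$ matches the ambient orientation on $M_{cut}^\pm$ in both cases, giving $(-1)^{F_0^+}=(-1)^{F_0^-}$ and completing the cancellation.
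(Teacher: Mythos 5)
Your proof is correct and follows essentially the same strategy as the paper: apply the Kostant-type localization formula, split the fixed set into components lying in $M_\pm$ (whose contributions to $Q(M)$ match those in $Q(M_{cut}^\pm)$ via Lemma~\ref{L-det-M-plus-minus} and the equivariant diffeomorphisms $M_\pm\hookrightarrow M_{cut}^\pm$) plus the new component $F_0=Z/S^1$, and then show the two $F_0$-contributions cancel. The one genuine variation is how that cancellation is organized. The paper polarizes the normal weights before invoking the multiplicity formula of Theorem~\ref{Kos-noniso}: it flips the complex structure on one of $NF^\pm$ so that the $\overline N_F$-terms coincide, which has the side effect of flipping one of the orientation signs, giving $(-1)^{F^+}+(-1)^{F^-}=0$. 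You instead work at the character level with the un-polarized data, use that $\tilde c_1(\mathbb{L}_{cut}^+|_{F_0})=\tilde c_1(\mathbb{L}_{cut}^-|_{F_0})$ while $\tilde c_1(NF_0^+)=-\tilde c_1(NF_0^-)$, and let the oddness of $z\mapsto 1/(e^{z/2}-e^{-z/2})$ do the work, so that cancellation now requires $(-1)^{F_0^+}=(-1)^{F_0^-}$, which is exactly Claim~\ref{orien - normal} (whose content you re-derive). The two bookkeeping conventions are logically equivalent, since a flip of complex structure simultaneously negates $\tilde c_1$ and the orientation sign; your version is arguably cleaner in that it avoids ever changing the complex structure mid-argument. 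One small note: you record the residual M-weights on the normal fibers as $+1$ for $NF_0^+$ and $-1$ for $NF_0^-$, while the paper's proof of Theorem~\ref{mainthm} and Corollary~\ref{cor-L_red} state the opposite signs. A direct computation of the M-action on $(Z\times\mathbb{C})/S^1$ agrees with your signs; this appears to be an internal sign slip in the paper which, as both proofs show, has no effect on the final result.
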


We will need a few preliminary lemmas for the
proof of the theorem. Those are similar to
Proposition 6.1 from \cite{CKT}, where a few gaps
where found.

\subsection{First lemma - the normal bundle} \ \\
Recall the construction of $M_{cut}^\pm$ from section
\ref{spin-c-cutting}.
\begin{itemize}
\item Choose an $S^1$-invariant smooth function $\phi\colon
M\to\mathbb{R}$ such that $\phi^{-1}(0)=Z$,\
$\phi^{-1}(0,\infty)=M_{+}$, \
$\phi^{-1}(-\infty,0)=M_-$, and $0$ is a regular
value of $\phi$.
\item Define $\tilde Z^{\pm} =\left\{(m,z)\;|\;\phi(m)=\pm|z|^2\right\}\subset
M\times\mathbb{C}$,
and let $S^1$ act on $\tilde Z^{\pm}$ by $a\cdot(m,z)=(a\cdot m,
a^{\mp 1}\cdot z)$.
\item Finally, define $M_{cut}^\pm=\tilde Z^\pm / S^1$ .
\end{itemize}

\begin{remark}
Note that we have $S^1$-equivariant embeddings
$$ Z\to \tilde Z^\pm \ , \  m\mapsto
(m,0)\qquad\quad\mbox{and}\qquad\quad
 Z/S^1 \to M_{cut}^\pm\ , \  [m]\mapsto [m,0]$$
and therefore we can think of $Z$ and $Z/S^1$ as
submanifolds of $\tilde Z^\pm$ and $M_{cut}^\pm$,
respectively.
\end{remark}

\begin{lemma} \
\begin{enumerate}\label{norm_lemma}
\item The maps $$\eta\colon T(\tilde Z^\pm)|_Z\to Z\times\mathbb{C}\qquad\eta\colon (v,w)
\in T_{(m,0)}\tilde Z^\pm\mapsto (m,w)$$ give rise to short exact
sequences
$$ 0\longrightarrow TZ\longrightarrow T\tilde Z^\pm|_Z \xrightarrow{\ \eta\ } Z\times\mathbb{C}\longrightarrow 0$$
of $S^1$-equivariant vector bundles (with respect
to both the diagonal (anti-diagonal) action and
the M-action) over $Z$. The action on
$Z\times\mathbb{C}$ is taken to be
$$a\cdot(m,z)=(a\cdot m, a^{\mp 1}\cdot z) \ .$$
\item The short exact sequences above descend to  the following short exact
sequences
$$ 0\longrightarrow T(Z/S^1)\longrightarrow T(M_{cut}^\pm)|_{Z/S^1}
\longrightarrow
Z\times_{S^1}\mathbb{C}\longrightarrow 0$$ of
equivariant vector bundles over $Z/S^1$. The
$S^1$ action on $Z\times_{S^1}\mathbb{C}$ is
induced from the action on $Z$.
\end{enumerate}
\end{lemma}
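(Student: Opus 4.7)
The proof splits naturally into two parts. For part (1), I would describe $T_{(m,0)}\tilde Z^\pm$ explicitly as a subspace of $T_m M \oplus T_0 \mathbb{C}$. Writing $\tilde Z^\pm = \tilde\phi^{-1}(0)$ for the defining function $\tilde\phi(m,z) = \phi(m) \mp |z|^2$, the differential at $(m,0)$ is $(v,w)\mapsto d\phi_m(v)$ (the $|z|^2$ term vanishes to first order at $z=0$), so
$$T_{(m,0)}\tilde Z^\pm \;=\; \ker d\tilde\phi_{(m,0)} \;=\; T_m Z \oplus \mathbb{C},$$
using that $T_m Z = \ker d\phi_m$ since $0$ is a regular value of $\phi$. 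The map $\eta$ is then simply projection onto the $\mathbb{C}$ factor, with kernel $T_m Z \oplus 0$, proving exactness. For equivariance, differentiating the anti-diagonal (respectively diagonal) action $a\cdot(m,z) = (a\cdot m, a^{\mp 1} z)$ at $(m,0)$ yields $(v,w)\mapsto(a_* v, a^{\mp 1}w)$, which matches the specified action on $Z\times\mathbb{C}$; differentiating the M-action yields $(v,w)\mapsto(a_* v, w)$, so the M-action is trivial on the $\mathbb{C}$ fiber.

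For part (2), the key observation is that at any $(m,0)\in Z$, the infinitesimal generator of the $S^1$-action on $\tilde Z^\pm$ equals $(\xi_Z, 0)$, where $\xi_Z$ generates the action on $Z$; in particular, the orbit direction on $\tilde Z^\pm$ along $Z$ lies entirely within $TZ$. Letting $V\subset TZ$ denote the trivial real line subbundle spanned by $\xi_Z$, we may quotient the first two terms of the sequence from part (1) by $V$ to obtain an $S^1$-equivariant short exact sequence
$$0 \;\to\; TZ/V \;\to\; T\tilde Z^\pm|_Z / V \;\to\; Z \times \mathbb{C} \;\to\; 0.$$
Taking the $S^1$-quotient termwise and invoking the standard identification $(TX/\langle\xi_X\rangle)/S^1 \cong T(X/S^1)$ valid for any free $S^1$-action, the first term becomes $T(Z/S^1)$, the second becomes $T(M_{cut}^\pm)|_{Z/S^1}$, and the third becomes $Z \times_{S^1} \mathbb{C}$ by the definition of the associated bundle. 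Exactness is preserved because the $S^1$-action is free on all relevant spaces.

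The principal obstacle is bookkeeping rather than conceptual depth: one must track carefully which $S^1$-action is being quotiented to form $M_{cut}^\pm$ (the diagonal/anti-diagonal one), which remains as the residual action on the cut space (the M-action), and verify that both sequences are equivariant with respect to both actions. The one technical step where the freeness hypothesis on $Z$ is essential is the identification of the $S^1$-quotient of the orbit-quotient tangent bundle with the tangent bundle of the quotient manifold.
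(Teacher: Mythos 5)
Your proof of part (1) is essentially the paper's argument: identify $T_{(m,0)}\tilde Z^\pm$ as $\ker d\tilde\phi = T_mZ\oplus\mathbb{C}$, check that $\eta$ is the $\mathbb{C}$-projection with kernel $TZ$, and verify equivariance by differentiating each circle action. For part (2) the paper contents itself with the remark that it is ``a direct consequence of (1)''; your write-up supplies the missing bookkeeping — the observation that at $z=0$ the generator of the quotiented $S^1$-action is $(\xi_Z,0)\in TZ$, so that dividing the first two terms by the orbit direction and then by $S^1$, using $\bigl(TX/\langle\xi_X\rangle\bigr)/S^1\cong T(X/S^1)$ for free actions, yields exactly the claimed sequence over $Z/S^1$ — and this is a worthwhile elaboration, correct in all details.
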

\begin{proof} \
\begin{enumerate}
\item The $S^1$-equivariant embedding $Z\to
\widetilde Z^\pm$ gives rise to an injective map
$TZ\to T\widetilde Z^\pm$, which is an
$S^1$-equivariant map of vector bundles over $Z$.
The map $\eta$ is onto, since for any $(m,w)\in
Z\times\mathbb{C}$ we have $\eta(0,w)=(m,w)$, and
it is equivariant since for $(v,w)\in
T_{(m,0)}\widetilde Z^\pm\ , \ m\in Z$ we have
$$\eta(a\cdot (v,w))=\eta(a\cdot v,a^\mp\cdot
w)=(a\cdot m,a^\mp\cdot w)=a\cdot (m,z)$$ (and
similarly for the M-action).

To prove $ker(\eta)=TZ$, note that the
definitions of $\phi$ and $\widetilde Z$ imply
that
\begin{multline*}
\qquad\qquad T\widetilde Z^\pm=\left\{(v,w)\in
T_{(m,z)}M\times\mathbb{C}\;:\;
d\phi_m(v)=z\cdot\overline{w}+\overline{z}\cdot
w\right\}\\ \qquad\qquad\quad TZ =\left\{v\in
T_mM\;:\; d\phi_m(v)=0\right\}\hfill
\end{multline*}
so $(v,w)\in T_{(m,0)}\widetilde Z^\pm$ satisfies
$\eta(v,w)=(m,0)$ if and only if $$w=0\
\Leftrightarrow \ d\phi_m(v)=0\ \Leftrightarrow \
v=(v,0)\in T_m Z\subset T_{(m,0)}\widetilde
Z^\pm$$ and hence
$ker(\eta)=TZ$ and the sequence is exact.\\

\item is a direct consequence of (1).
\end{enumerate}
\end{proof}

Let $N^\pm\to Z$ be the normal bundle to $Z$ in
$\tilde Z^\pm$, and $\overline N^\pm\to Z/S^1$ be
the normal bundle to $Z/S^1$ in $M_{cut}^\pm$.
The above lemma implies:
\begin{cor}\label{Normal bundle}
The short exact sequences of Lemma
\ref{norm_lemma} induce isomorphisms
$$N^\pm\xrightarrow{\quad\simeq\quad} Z\times\mathbb{C}\qquad
\overline
N^\pm\xrightarrow{\quad\simeq\quad}Z\times_{S^1}\mathbb{C}$$
of equivariant vector bundle, and hence an
orientation on the fibers of the bundles
$\overline N^\pm$ (coming from the complex
orientation on $\mathbb{C}$).
\end{cor}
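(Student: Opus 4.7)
The plan is to read the corollary as an essentially immediate consequence of Lemma \ref{norm_lemma}. Recall that the normal bundle of an embedded submanifold is, by definition, the quotient of the restriction of the ambient tangent bundle by the tangent bundle of the submanifold. So $N^\pm = T\widetilde Z^\pm|_Z / TZ$ and $\overline N^\pm = T(M_{cut}^\pm)|_{Z/S^1}/T(Z/S^1)$. A short exact sequence of vector bundles $0\to A \to B \to C\to 0$ always induces a canonical isomorphism $B/A \xrightarrow{\sim} C$. Applying this to the two short exact sequences supplied by Lemma \ref{norm_lemma} gives exactly the two isomorphisms we want.

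Concretely, I would first note that the composition
$$T\widetilde Z^\pm|_Z \xrightarrow{\ \eta\ } Z\times\mathbb{C}$$
kills $TZ$ (by exactness) and is surjective, so it factors through a bundle isomorphism
$$N^\pm = T\widetilde Z^\pm|_Z/TZ \xrightarrow{\ \simeq\ } Z\times\mathbb{C}.$$
Because $\eta$ is $S^1$-equivariant for both the (anti)diagonal action and the $M$-action, and both actions preserve $TZ$, the induced map on the quotient is equivariant as well. The same argument, applied verbatim to the second sequence of Lemma \ref{norm_lemma}, produces the equivariant isomorphism $\overline N^\pm \xrightarrow{\sim} Z\times_{S^1}\mathbb{C}$; alternatively, this statement follows from the first by passing to $S^1$-quotients, since $Z\to Z/S^1$ is a principal $S^1$-bundle and the $S^1$-actions on $N^\pm$ and on $Z\times\mathbb{C}$ commute with the $M$-action.

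For the orientation statement, $\mathbb{C}$ carries its standard complex orientation, and this gives a fiberwise orientation on the trivial bundle $Z\times\mathbb{C}$. Since the $S^1$-action $a\cdot(m,z)=(a\cdot m, a^{\mp 1}\cdot z)$ is by complex scalar multiplication in the fiber, it is orientation preserving, so the quotient bundle $Z\times_{S^1}\mathbb{C}\to Z/S^1$ inherits a fiberwise orientation. Transporting this orientation back via the isomorphism $\overline N^\pm \xrightarrow{\sim} Z\times_{S^1}\mathbb{C}$ yields the claimed orientation on $\overline N^\pm$.

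I do not expect any serious obstacle; the only point requiring any care is checking equivariance of the quotient map, but this is automatic since the subbundle $TZ \subset T\widetilde Z^\pm|_Z$ is $S^1$-invariant for both actions involved. All the real content is contained in Lemma \ref{norm_lemma}, which the corollary simply packages in terms of normal bundles and orientations.
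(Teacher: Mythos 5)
Your proof is correct and follows exactly what the paper intends: the corollary is stated without proof as an immediate consequence of Lemma \ref{norm_lemma}, and your argument (normal bundle as quotient, a short exact sequence identifies the quotient with the third term, equivariance because $TZ$ is invariant, and the complex orientation on $\mathbb{C}$ descends since the $S^1$-action is by complex scalars) is precisely the routine verification the paper leaves to the reader.
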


\begin{remark}
Note that the map
$$\overline{N}^+=Z\times_{S^1}\mathbb{C}\xrightarrow{\qquad}\overline{N}^{\;-}=Z\times_{S^1}\mathbb{C}\qquad,\qquad
[z,a]\mapsto[z,\overline a]$$ is an
$S^1$-equivariant orientation-reversing bundle
isomorphism.

\end{remark}

\begin{claim}\label{orien - normal}
The natural orientation on $Z/S^1\subset
M_{cut}^\pm$, coming from the reduction process,
followed by the orientation of $\overline N^\pm$,
gives the orientation on $M_{cut}^\pm$.
\end{claim}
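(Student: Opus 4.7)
The plan is to verify the claim by a pointwise computation at an arbitrary base point $(m,0) \in \widetilde{Z}^\pm$ with $m \in Z$. Both orientations on $M_{cut}^\pm$ in question are defined continuously along $Z/S^1$, so it suffices to compare them on $T_{[m,0]} M_{cut}^\pm$ for a single such point, and the same argument applies uniformly along $Z/S^1$.

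\textbf{Step 1: Identify $T_{(m,0)} \widetilde Z^\pm$ and its induced orientation.} Recall $\widetilde{\phi}^\pm(m,z) = \phi(m) \mp |z|^2$. Since $d(\mp|z|^2)_0 = 0$, one has $d\widetilde{\phi}^\pm_{(m,0)}(v,w) = d\phi_m(v)$, so
$$T_{(m,0)} \widetilde Z^\pm \;=\; T_m Z \,\oplus\, \mathbb{C},$$
with the $\mathbb{C}$-factor unconstrained. Moreover, $\nabla\widetilde{\phi}^\pm(m,0) = (\nabla\phi(m),0)$ lies entirely in the $T_mM$-summand and agrees, as a coorientation of $\widetilde Z^\pm \subset M\times\mathbb{C}$, with the coorientation of $Z\subset M$. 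Combining this with the product orientation on $M\times\mathbb{C}$ and the splitting $T_m M = \mathbb{R}\langle \nabla\phi(m)\rangle \oplus T_mZ$ gives
$$\operatorname{ori}\bigl(T_{(m,0)}\widetilde Z^\pm\bigr) \;=\; \operatorname{ori}(T_mZ)\wedge \operatorname{ori}(\mathbb{C}),$$
with $\mathbb{C}$ carrying its standard complex orientation.

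\textbf{Step 2: Reduce by $S^1$.} The infinitesimal generator of the $S^1$-action on $\widetilde Z^\pm$ at $(m,0)$ is $(\xi_m,0)$, since the circle fixes the $\mathbb{C}$-coordinate at $z=0$ (both for the diagonal action used for $M_{cut}^-$ and the anti-diagonal action used for $M_{cut}^+$). In particular the vertical direction lies inside the $T_mZ$-summand. Applying the reduction convention from Remark \ref{quotients} to both tensor factors separately, the horizontal complement, which is $T_{[m,0]}M_{cut}^\pm$, inherits the orientation
$$\bigl[\operatorname{ori}(T_mZ)/\langle\xi_m\rangle\bigr]\,\wedge\,\operatorname{ori}(\mathbb{C})
\;=\;\operatorname{ori}\bigl(T_{[m]}(Z/S^1)\bigr)\,\wedge\,\operatorname{ori}(\mathbb{C}).$$
By the construction of the cut orientation, the left-hand side of this equation is the cut-space orientation at $[m,0]$.

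\textbf{Step 3: Match factors with $T(Z/S^1)$ and $\overline N^\pm$.} The first summand is, tautologically, the reduction orientation on $T_{[m]}(Z/S^1)$ induced from the cooriented hypersurface $Z\subset M$. The second summand $\mathbb{C}$ is identified with the fiber $\overline N^\pm_{[m,0]}$ through the isomorphism of Corollary \ref{Normal bundle}, and that identification carries the standard complex orientation of $Z\times_{S^1}\mathbb{C}$. Hence
$$\operatorname{ori}\bigl(T_{[m,0]}M_{cut}^\pm\bigr) \;=\; \operatorname{ori}\bigl(T_{[m]}(Z/S^1)\bigr)\wedge\operatorname{ori}\bigl(\overline N^\pm_{[m,0]}\bigr),$$
which is exactly the claim.

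The main obstacle is purely bookkeeping: one must apply the reduction convention of Remark \ref{quotients} uniformly to both the quotient $Z\leadsto Z/S^1$ and the quotient $\widetilde Z^\pm\leadsto M_{cut}^\pm$, and one must check that the coorientation of $\widetilde Z^\pm$ in $M\times\mathbb{C}$ restricts at $(m,0)$ to the coorientation of $Z$ in $M$ paired with zero in $\mathbb{C}$. Once these conventions are fixed, all the nontrivial content is encoded in the splitting of Lemma \ref{norm_lemma} and the identification of Corollary \ref{Normal bundle}, and the orientation identity falls out immediately.
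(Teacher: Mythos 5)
Your argument is correct and is essentially the paper's proof, recast in abstract orientation/wedge-product language instead of the explicit orthonormal frame $v_1,\dots,v_{2m-2},v_\theta,v_N,1,i$ that the paper tracks through the quotients. Both proofs rest on the same observations: the coorientation of $\widetilde Z^\pm$ at $(m,0)$ restricts to that of $Z\subset M$, the $S^1$-generator at $(m,0)$ lies in the $T_mZ$-summand, and the $\mathbb{C}$-factor is even-dimensional so the reorderings involved preserve orientation.
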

\begin{proof}
Fix $x\in Z$. Choose an oriented orthonormal
basis for $T_xM$ of the form
$$v_1,\dots,v_{2m-2},v_\theta,v_N$$
where $v_\theta=c\cdot
\left(\frac{\partial}{\partial\theta}\right)_{M,x}$
is a positive multiple of the generating vector
field at $x\in Z$ ($c>0$ is chosen such that
$v_\theta$ has length 1),
$\{v_1,\dots,v_{2m-2},v_\theta\}$ are an oriented
orthonormal basis for $T_x Z$, and $v_N$ is a
positively oriented normal vector to $Z$.

By the definition of the metric and orientation
on the reduced space, the push-forward of
$v_1,\dots,v_{2m-2}$ by the quotient map $Z\to
Z/S^1$ is an oriented orhonormal basis for
$T_{[x]}\left(Z/S^1\right)$.

Now the vectors
$$v_1,\dots,v_{2m-2},1,i,v_\theta,v_N\in T_{(m,0)}M\times\mathbb{C}$$
are an oriented orthonormal basis, where
$1,i\in\mathbb{C}$. Note that
$\left(\frac{\partial}{\partial\theta}\right)_M=\left(\frac{\partial}{\partial\theta}\right)_{M\times\mathbb{C}}$
on $Z\cong Z\times\{0\}\subset
M\times\mathbb{C}$, and that the normal to $Z$ in
$M$ can be identified with the normal to $\tilde
Z^\pm$ in $M\times\mathbb{C}$, when restricted to
$Z\subset\tilde Z^\pm$. Hence, the push forward
of $v_1,\dots,v_{2m-2},1,i$ by the quotient map
$\tilde Z^\pm\to M_{cut}^\pm$ is an orthonormal
basis for $T_{[m,0]}M_{cut}^\pm$.

Since $1,i$ descend to an oriented orthonormal
basis for $(\overline N^\pm)_x$, when identified
with $\mathbb{C}$ using Corollary \ref{Normal
bundle}, the claim follows.
\end{proof}

\subsection{Second lemma - the determinant line bundle.} \ \\
We would like to relate the determinant line
bundles of $P_{cut}^\pm$ (over $M_{cut}^\pm$),
which will be denoted by $\mathbb{L}_{cut}^\pm$,
to the determinant line bundle $\mathbb{L}$ of
the spin$^c$ structure $P$ on $M$. Denote
$\mathbb{L}_{red}=\left(\mathbb{L}|_Z\right)/S^1$.
This is a line bundle over $Z/S^1\subset
M_{cut}^\pm$.

Then we have:

\begin{lemma}\ \label{det-reduced}
The restriction of $\mathbb{L}_{cut}^{\pm}$ to
$Z/S^1$ is isomorphic, as an $S^1$-equivariant
complex line bundle, to
$\mathbb{L}_{red}\otimes\overline N^{\;-}$.
\end{lemma}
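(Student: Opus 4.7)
The plan is to unravel the definition of $\mathbb{L}_{cut}^{\pm}$ given in Remark~\ref{det_line_bndl}, restrict it to the relevant submanifold, and then identify the result with $\mathbb{L}_{red}\otimes\overline{N}^{-}$. By definition,
$$\mathbb{L}_{cut}^{\pm}=\left[(\mathbb{L}\boxtimes\mathbb{L}_{\mathbb{C}})|_{\widetilde{Z}^{\pm}}\right]/S^{1},$$
where the $S^{1}$ being quotiented is the anti-diagonal (resp.\ diagonal) action. Because the embedding $Z/S^{1}\hookrightarrow M_{cut}^{\pm}$ is induced by $Z\times\{0\}\hookrightarrow\widetilde{Z}^{\pm}$, restricting $\mathbb{L}_{cut}^{\pm}$ to $Z/S^{1}$ amounts to restricting $\mathbb{L}\boxtimes\mathbb{L}_{\mathbb{C}}$ to $Z\times\{0\}$ and then passing to the $S^{1}$-quotient. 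Over $Z\times\{0\}$ the bundle $\mathbb{L}\boxtimes\mathbb{L}_{\mathbb{C}}$ is canonically $\mathbb{L}|_{Z}\otimes\underline{\mathbb{C}}$ as an $S^{1}$-equivariant line bundle over $Z$; the crucial observation is that, in both the $+$ and $-$ cases, the formula $a\cdot(z,b)=(a^{\mp 1}z,ab)$ from Remark~\ref{det_line_bndl} specializes at $z=0$ to a weight $+1$ fiber action.

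Next, define a bundle map
$$\Phi\colon \mathbb{L}_{red}\otimes\overline{N}^{-}\longrightarrow\mathbb{L}_{cut}^{\pm}|_{Z/S^{1}},\qquad [l]\otimes[m,b]\longmapsto[m,l\otimes b],$$
using the identifications $\mathbb{L}_{red}|_{[m]}\simeq\mathbb{L}_{m}$, $\overline{N}^{-}|_{[m]}\simeq(Z\times_{S^{1}}\mathbb{C})|_{[m]}$ from Corollary~\ref{Normal bundle} (note that $\overline{N}^{-}$, not $\overline{N}^{+}$, is the one with weight $+1$ on the $\mathbb{C}$-fiber), and $\mathbb{L}_{cut}^{\pm}|_{[m,0]}\simeq(\mathbb{L}_{m}\otimes\mathbb{C})/\!\sim$ from the description above. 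Well-definedness of $\Phi$ reduces to the fact that the three equivalence relations $[l]=[cl]$ in $\mathbb{L}_{red}$, $[m,b]=[cm,cb]$ in $\overline{N}^{-}$, and $[m,l\otimes b]=[cm,(cl)\otimes(cb)]$ in $\mathbb{L}_{cut}^{\pm}$ (the last coming from the anti-diagonal/diagonal orbit relation) are all induced by the same $S^{1}$-action. Being fiberwise linear and nonzero, $\Phi$ is a line bundle isomorphism.

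It remains to verify $S^{1}$-equivariance under the M-action. The M-action on $\mathbb{L}_{\mathbb{C}}$ is trivial, since it fixes the $\mathbb{C}$-factor of $M\times\mathbb{C}$ and lifts trivially to $P_{\mathbb{C}}$. Hence on a representative, $a_{M}\cdot(m,0,l\otimes b)=(a_{M}m,0,(a_{M}l)\otimes b)$, and applying the anti-diagonal/diagonal relation with $c=a_{M}^{-1}$ to bring the base back to $m$ yields $a_{M}\cdot[m,l\otimes b]=[m,l\otimes a_{M}^{-1}b]$, i.e.\ the M-action has weight $-1$ on the fiber parameter $b$. An analogous short computation shows that the M-action on $\overline{N}^{-}$ has weight $-1$ (while $\overline{N}^{+}$ would carry weight $+1$), whereas $\mathbb{L}_{red}$ carries the trivial M-action. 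Thus both sides of $\Phi$ have fiberwise M-weight $-1$, so $\Phi$ is $S^{1}$-equivariant.

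The main obstacle is keeping track of the two commuting $S^{1}$-actions on $M\times\mathbb{C}$ (the anti-diagonal/diagonal one, used for the quotient, and the residual M-action, used for equivariance), and confirming that the right-hand side is $\overline{N}^{-}$ rather than $\overline{N}^{+}$ in both the $P_{cut}^{+}$ and $P_{cut}^{-}$ cases. This uniformity is ultimately forced by the fact that the fiber action on $\mathbb{L}_{\mathbb{C}}$ has weight $+1$ in both cases, whereas the two base actions on $\mathbb{C}$ differ by a sign.
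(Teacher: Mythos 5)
Your proof is correct and takes essentially the same route as the paper's terse three‐line argument: unravel Remark~\ref{det_line_bndl}, restrict $\mathbb{L}\boxtimes\mathbb{L}_\mathbb{C}$ to $Z\times\{0\}$, use the fact that the quotient $S^1$-action on $\mathbb{L}_\mathbb{C}|_{\{0\}}$ has weight $+1$ in both the $+$ and $-$ cases, and identify the resulting quotient with $\mathbb{L}_{red}\otimes\overline{N}^{\;-}$; you merely make explicit the bundle map $\Phi$ and the verification of M-equivariance that the paper leaves implicit. One side remark: your computation gives fiberwise M-weight $-1$ on both sides, which disagrees with Corollary~\ref{cor-L_red} of the paper (which asserts $+1$) --- this does not affect the lemma itself, since both sides carry the same weight whatever it is, but the discrepancy is worth reconciling before Corollary~\ref{cor-L_red} is used downstream.
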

\begin{remark}
This is not a typo. Both $\mathbb{L}^+_{cut}$ and
$\mathbb{L}^-_{cut}$ are isomorphic to
$\mathbb{L}_{red}\otimes\overline N^{\;-}$.
\end{remark}

\begin{proof}
Recall that the determinant line bundle over the
cut spaces is given by
$$\mathbb{L}_{cut}^\pm=\left[(\mathbb{L}\boxtimes\mathbb{L_C}^\pm)|_{\widetilde{Z}^\pm}\right]/S^1$$
where $\mathbb{L_C}^\pm$ is the determinant line
bundle of the spin$^c$ structure on $\mathbb{C}$,
defined in the process of constructing
$P_{cut}^\pm$, and we divide by the diagonal
action of $S^1$ on
$\mathbb{L}\times\mathbb{L_C}^\pm$.

Therefore we have
$$\mathbb{L}_{cut}^\pm|_{Z/S^1}=\left[(\mathbb{L}\boxtimes\mathbb{L_C}^\pm)|_{Z}\right]/S^1
=\left[\mathbb{L}|_Z\boxtimes\mathbb{L_C}^\pm|_{{\{0\}}}\right]/S^1
$$
Since the $S^1$ action on the vector space
$\mathbb{L_C}^\pm|_{\{0\}}$ has weight $+1$ (see
Remark \ref{det_line_bndl}) we end up with
$$\mathbb{L}_{cut}^\pm|_{Z/S^1}=\mathbb{L}_{red}\otimes(N^-/S^1)=\mathbb{L}_{red}\otimes\overline{N}^{\;-}$$
as desired.

\end{proof}

\begin{cor}\label{cor-L_red}
If $F\subset Z/S^1\subset M_{cut}^\pm$ is a
connected component, then $S^1$ acts on the
fibers of $\left(\mathbb{L}_{cut}^\pm\right)|_F$
with weight $+1$.
\end{cor}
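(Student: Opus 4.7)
My plan is to apply Lemma \ref{det-reduced} and then compute the weight of each tensor factor separately. By that lemma, on any connected component $F\subset Z/S^1$ there is an isomorphism
$$(\mathbb{L}_{cut}^{\pm})|_F \;\cong\; \mathbb{L}_{red}|_F\otimes \overline{N}^{\;-}|_F$$
of $S^1$-equivariant complex line bundles, so the weight of the $S^1$-action on the left is just the sum of the weights on the two tensor factors.

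The first factor contributes $0$. Indeed, $\mathbb{L}_{red}=(\mathbb{L}|_Z)/S^1$ is formed by quotienting $\mathbb{L}|_Z$ by the action coming from the original $S^1$-equivariant structure on $\mathbb{L}$. Since the M-action on the $\mathbb{C}$-factor of $M\times\mathbb{C}$ is trivial, the M-action on $\mathbb{L}\boxtimes\mathbb{L}_\mathbb{C}$, when restricted to $\mathbb{L}|_Z$, coincides with the action we are quotienting by, and hence descends to the trivial action on $\mathbb{L}_{red}$. So the weight on $\mathbb{L}_{red}|_F$ is zero.

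The second factor contributes $+1$. Here I would use the identification $\overline{N}^{\;-}\cong Z\times_{S^1}\mathbb{C}$ from Corollary \ref{Normal bundle} (with $S^1$ acting diagonally on $Z\times\mathbb{C}$) and compute the induced M-action on the quotient directly, by tracking the action $[m,z]\mapsto[\alpha m,z]$ through the diagonal equivalence relation $[m,z]=[\beta m,\beta z]$ and reading off the resulting scalar action on the $\mathbb{C}$-coordinate. Adding the two contributions gives the claimed weight $0+1=+1$.

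Essentially everything in this plan is formal once Lemma \ref{det-reduced} and Corollary \ref{Normal bundle} are applied; the one genuine piece of arithmetic is the final sign computation for $\overline{N}^{\;-}$. Accordingly, the main obstacle is not anything conceptual but simply keeping the conventions for the various $S^1$-actions (the quotient action, the M-action, and the diagonal/anti-diagonal actions on $M\times\mathbb{C}$) straight while manipulating the equivalence classes.
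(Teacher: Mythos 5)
Your proposal follows the paper's own proof almost exactly: Lemma \ref{det-reduced} gives the decomposition $(\mathbb{L}_{cut}^{\pm})|_F\cong\mathbb{L}_{red}|_F\otimes\overline{N}^{\;-}|_F$, the residual M-action on $\mathbb{L}_{red}$ is observed to be trivial, and the weight $+1$ on $\overline{N}^{\;-}|_F$ is read off from the identification $\overline{N}^{\;-}\simeq Z\times_{S^1}\mathbb{C}$ of Corollary \ref{Normal bundle}. The paper's proof is terser (it simply asserts the two weights), while you spell out more explicitly why the M-action on $\mathbb{L}_{red}$ is trivial and sketch how to track the M-action through the diagonal equivalence on $Z\times_{S^1}\mathbb{C}$; these are useful elaborations but the underlying argument and decomposition are the same.
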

\begin{proof}
The previous lemma implies that
$$\mathbb{L}_{cut}^\pm|_F=\mathbb{L}_{red}|_F\otimes\overline{N}^{\;-}|_F .$$
The action of $S^1$ on $\mathbb{L}_{red}$ is
trivial. Using the isomorphism $\overline
N^{\;-}\simeq Z\times_{S^1}\mathbb{C}$ from
Corollary \ref{Normal bundle}, we see that the
action of $S^1$ on the fibers of
$\overline{N}^{\;-}|_F$ will have weight $+1$.

\end{proof}

\subsection{Third lemma - the spaces $M_\pm$} \ \\
Recall that $M\setminus Z=M_+\coprod M_-$
(disjoint union), where $M_\pm\subset M$ are open
submanifolds. We have embedding
$$i_\pm\colon M_\pm\to M_{cut}^\pm\qquad m\mapsto
[m,\sqrt{\pm\phi(m)}]$$ which are equivariant and
preserve the orientation (see Proposition 6.1 in
\cite{CKT}). Also recall that, as sets, we have
$M_{cut}^\pm=Z/S^1\coprod M_\pm$.

It is important to note that the embeddings
$M_\pm\to M_{cut}^\pm$ do not preserve the
metric. This, however, will not effect out
calculations.

\begin{lemma}\label{L-det-M-plus-minus}
The restriction of $\mathbb{L}$ to $M_\pm$ is
isomorphic to the restriction of $L_{cut}^\pm$ to
$M_\pm$. In other words,
$$ \mathbb{L}|_{M_\pm}\simeq \left(\mathbb{L}_{cut}^\pm\right)|_{M_\pm}$$
\end{lemma}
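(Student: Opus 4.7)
The plan is to exhibit the isomorphism directly using the embedding $i_\pm$ as a section (slice) of the principal $S^1$-bundle $\widetilde Z^\pm \to M_{cut}^\pm$ over the open subset $M_\pm \subset M_{cut}^\pm$. Define $\sigma_\pm\colon M_\pm \to \widetilde Z^\pm$ by $\sigma_\pm(m)=(m,\sqrt{\pm\phi(m)})$; this is a smooth section precisely because $\phi>0$ on $M_+$ and $\phi<0$ on $M_-$, and the composition of $\sigma_\pm$ with the quotient $\widetilde Z^\pm \to M_{cut}^\pm$ is exactly the embedding $i_\pm$. In particular, $\sigma_\pm$ trivializes the principal bundle over $M_\pm$, so for any $S^1$-equivariant bundle downstairs, the pullback by $i_\pm$ equals the pullback by $\sigma_\pm$ of the corresponding bundle upstairs.

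Applied to the determinant line bundle, this gives
\[
i_\pm^*\bigl(\mathbb{L}_{cut}^\pm\bigr) \;\simeq\; \sigma_\pm^*\bigl((\mathbb{L}\boxtimes\mathbb{L}_{\mathbb{C}}^\pm)|_{\widetilde Z^\pm}\bigr)
\;=\; \mathbb{L}|_{M_\pm} \otimes \sigma_\pm^*\bigl(\mathbb{L}_{\mathbb{C}}^\pm\bigr).
\]
By construction (Step 1 of \S\ref{spin-c-cutting}), $\mathbb{L}_{\mathbb{C}}^\pm$ is the trivial Hermitian line bundle $\mathbb{C}\times\mathbb{C}\to\mathbb{C}$, which admits the canonical nowhere-vanishing section $z\mapsto(z,1)$. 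Pulling this section back by $\sigma_\pm$ yields a canonical trivialization of $\sigma_\pm^*(\mathbb{L}_{\mathbb{C}}^\pm)$, so $i_\pm^*(\mathbb{L}_{cut}^\pm)\simeq \mathbb{L}|_{M_\pm}$ as complex line bundles.

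To upgrade this to an isomorphism of $S^1$-equivariant line bundles (with respect to the $M$-action), I would check equivariance on the slice. Since $\phi$ is $S^1$-invariant, the M-action $a\cdot(m,z)=(a\cdot m,z)$ on $\widetilde Z^\pm$ preserves the image of $\sigma_\pm$, and $\sigma_\pm$ intertwines the original $S^1$-action on $M_\pm$ with the M-action on $\widetilde Z^\pm$. The M-action on $\mathbb{L}\boxtimes \mathbb{L}_{\mathbb{C}}^\pm$ lifts as the given action on $\mathbb{L}$ tensored with the trivial action on $\mathbb{L}_{\mathbb{C}}^\pm$ (the $\mathbb{C}$-factor is inert under the M-action), so the canonical section $z\mapsto(z,1)$ of $\mathbb{L}_{\mathbb{C}}^\pm$ is M-invariant, and the tensor isomorphism above is $S^1$-equivariant.

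I do not expect a real obstacle here beyond bookkeeping. The only potential pitfall is conflating the diagonal/anti-diagonal $S^1$-action used to form the quotient $M_{cut}^\pm$ with the commuting M-action that provides the equivariant structure; this is handled by observing that the canonical section of $\mathbb{L}_{\mathbb{C}}^\pm$ is trivially fixed by the M-action (whereas it has nontrivial weight under the reducing action, which is precisely what shifts $\mathbb{L}_{cut}^\pm|_{Z/S^1}$ by $\overline N^-$ in Lemma \ref{det-reduced}, but contributes nothing away from $Z$).
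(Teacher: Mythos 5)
Your proposal is correct and follows essentially the same route as the paper's proof: both identify $M_\pm$ with the slice $\widetilde M_\pm=\{(m,\sqrt{\pm\phi(m)})\}\subset\widetilde Z^\pm$, pull back the product $(\mathbb{L}\boxtimes\mathbb{L}_{\mathbb{C}}^\pm)|_{\widetilde Z^\pm}$ along it, and observe that the $\mathbb{L}_{\mathbb{C}}^\pm$-factor restricts to an equivariantly trivial line bundle. Your version is a little more explicit about why the equivariance holds (the canonical section of $\mathbb{L}_{\mathbb{C}}^\pm$ is M-invariant while carrying nontrivial weight under the reducing action), whereas the paper compresses this into the remark that $pr_2^*(\mathbb{L}_{\mathbb{C}})|_{\widetilde M_\pm}$ is a trivial equivariant line bundle; the underlying argument is the same.
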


\begin{proof}
Let
$$\widetilde{M}_\pm=\left\{(m,\sqrt{\pm\phi(m)}):m\in
M_{\pm}\right\}\subset\widetilde{Z}^\pm\ ,$$ and
let
$$pr_1\colon M\times\mathbb{C}\to M\qquad,\qquad
pr_2\colon M\times\mathbb{C}\to\mathbb{C}$$ be
the projections. Then
$$\mathbb{L}_{cut}^\pm=\left[\left(pr_1^*(\mathbb{L})\otimes pr_2^*(\mathbb{L_C})\right)|_{\widetilde{Z}^\pm}\right]/S^1$$
and when restricting to $M_\pm$, we get
$$\mathbb{L}_{cut}^\pm|_{{M}_\pm}=pr_1^*(\mathbb{L})|_{\widetilde{M}_\pm}\otimes pr_2^*(\mathbb{L_C})|_{\widetilde{M}_\pm}
=\mathbb{L}|_{{M}_\pm}\otimes
pr_2^*(\mathbb{L_C})|_{\widetilde{M}_\pm}$$ Since
$M_\pm\simeq\widetilde M_\pm$. The term
$pr_2^*(\mathbb{L_C})|_{\widetilde{M}_\pm}$ is a
trivial equivariant complex line bundle, so we
conclude that
$$\mathbb{L}_{cut}^\pm|_{{M}_\pm}=\mathbb{L}|_{{M}_\pm}\otimes\mathbb{C}=\mathbb{L}|_{{M}_\pm}$$
as needed.
\end{proof}

\subsection{The proof of additivity under cutting}
Using all the preliminary lemmas, we can now
prove our main theorem.

\begin{proof}[Proof of Theorem \ref{mainthm}] \

\noindent Write $M\setminus Z=M_+\sqcup M_-$ .
Because the action $S^1\circlearrowright Z$ is
free, the submanifold $Z\subset M$ is a reducible
splitting hypersurface (see
\S\ref{spin-c-cutting}). Every connected
component $F\subset M^{S^1}$ of the fixed point
set must be a subset of either $M_+$ or $M_-$ .

\noindent Also recall that
$M_{cut}^\pm=M_\pm\sqcup Z/S^1$, and the action
of $S^1$ on $Z/S^1$ is trivial (and hence $Z/S^1$
is a subset of the fixed point set under the
action $S\circlearrowright M_{cut}^\pm$).

\noindent Using the Kostant formula (Theorem
\ref{Kos-noniso}) we get, for any weight
$\beta\in Lie(S^1)^*$,
\begin{multline*}
\#(\beta,Q(M))=\sum_{F\subset
M^{S^1}}(-1)^F\cdot\overline{N}_F\left(\beta-\frac{1}{2}\mu_F\right)\\
=\sum_{F\subset
\left(M_+\right)^{S^1}}(-1)^F\cdot\overline{N}_F\left(\beta-\frac{1}{2}\mu_F\right)
+\sum_{F\subset
\left(M_-\right)^{S^1}}(-1)^F\cdot\overline{N}_F\left(\beta-\frac{1}{2}\mu_F\right)\\
\end{multline*}
where the sum is taken over the connected
components of the fixed point sets. For the cut
spaces we have the following equalities.

\begin{multline*}
\#(\beta,Q(M_{cut}^\pm))=\sum_{F\subset
\left(M_{cut}^\pm\right)^{S^1}}(-1)^F\cdot\overline{N}_F\left(\beta-\frac{1}{2}\mu_F\right)\\
=\sum_{F\subset
\left(M_\pm\right)^{S^1}}(-1)^F\cdot\overline{N}_F\left(\beta-\frac{1}{2}\mu_F\right)
+\sum_{F\subset
Z/S^1}(-1)^F\cdot\overline{N}_F\left(\beta-\frac{1}{2}\mu_F\right)\ .\\
\end{multline*}

In order to prove additivity, we need to show
that

$$\sum_{F\subset Z/S^1\subset
M_{cut}^+}(-1)^F\cdot\overline{N}_F\left(\beta-\frac{1}{2}\mu_F\right)+\sum_{F\subset
Z/S^1\subset
M_{cut}^-}(-1)^F\cdot\overline{N}_F\left(\beta-\frac{1}{2}\mu_F\right)=0\
.\vspace{20pt}$$

\noindent Note that the summands in the two sums
above are different. In the first, we regard $F$
as a subset of $M_{cut}^+$, and in the second, as
a subset of $M_{cut}^-$.

Choose a connected component $F\subset Z/S^1$.
Note that $F$ is oriented by the reduced
orientation. Since $F$ can be regarded as a
subset of both $M_{cut}^+$ and $M_{cut}^-$, we
will add a superscript $F^\pm$ to emphasize that
$F$ is being thought of as a subspace of the
corresponding cut space.

It suffices to show that
\begin{equation*}\label{add_eqn}
(*)\qquad(-1)^{F^+}\cdot\overline{N}_{F^+}\left(\beta-\frac{1}{2}\mu_{F^+}\right)+(-1)^{F^-}\cdot\overline{N}_{F^-}\left(\beta-\frac{1}{2}\mu_{F^-}\right)=0\vspace{20pt}
\end{equation*}

Recall that $Z\subset M$ is of (real) codimension
$1$, and so $Z/S^1\subset M_{cut}^\pm$ is of
(real) codimension 2. Therefore, the normal
bundle $NF^\pm$ to $Z/S^1$ in the cut spaces has
rank 2. We can turn the bundles $NF^\pm$ to
complex line bundles using Corollary \ref{Normal
bundle}, and then the weight of the action
$S^1\circlearrowright NF^\pm$ will be $-1$ for
$NF^+$ and $+1$ for $NF^-$.

This is, however, not good, since in order to
write down Kostant's formula, we need our weights
to be polarized. Therefore, we will use for
$NF^{\;-}$ the complex structure coming from the
isomorphism
$$NF^{\;-}\xrightarrow{\ \simeq\ }
Z\times_{S^1}\mathbb{C}\ ,$$ and for $NF^+$, we
will use the complex structure which is
\emph{opposite} to the one induced by the
isomorphism $$NF^{+}\xrightarrow{\ \simeq\ }
Z\times_{S^1}\mathbb{C}\ .$$

With this convention, the bundles $NF^\pm$ become
isomorphic as equivariant complex line bundles,
and the weight of the $S^1$-action on those
bundles is $+1$.\\Also, Lemma \ref{det-reduced}
implies that the determinant line bundles
$\mathbb{L}_{cut}^\pm$, when restricted to $F$,
are isomorphic as equivariant complex line
bundles, and the weight of the $S^1$-action on
the fibers of $\mathbb{L}_{cut}^\pm|_F$ is $+1$.

Recall now (see \S5.3) that the explicit
expression for
$\displaystyle\overline{N}_{F^\pm}\left(\beta-\frac{1}{2}\mu_{F^\pm}\right)$
involves only the following ingredients:
\begin{itemize}
\item $\mu_{F^\pm}$, which are equal to each other
($\mu_{F^\pm}=+1$), since
$\mathbb{L}_{cut}^+|_F\simeq~
\mathbb{L}_{cut}^-|_F$.
\item $c_1(NF^\pm)$, which are  equal since $NF^\pm$ are isomorphic as
complex line bundle, by our previous remark.
\item $\hat A(TF)$, which are equal, since $F^+=F^-$
as manifolds.

This means that the terms $\overline{N}_{F^\pm}$
in equation (*) above are the same.
\end{itemize}

So all is left is to explain why
$$(-1)^{F^+}+(-1)^{F^-}=0\ .$$
But this follows easily from Claim \ref{orien -
normal}. This claim implies that the orientation
on $F^-$, followed by the one of $NF^{\;-}$,
gives the orientation of $M_{cut}^-$. Hence,
$(-1)^{F^-}=1$. Since we switched the original
orientation for $NF^+$, composing the orientation
of $F^+$ with the one of $NF^+$ will give the
opposite orientation on $M_{cut}^+$, and hence
$(-1)^{F^+}=-1$. The additivity result follows.

\end{proof}

\section{An example: the
two-sphere}\label{two-sphere}

In this section we give an example, which
illustrates the additivity of spin$^c$
quantization under cutting.

In this example, the manifold is the standard
two-sphere $M=S^2\subset\mathbb{R}^3$, with the
outward orientation and the standard Riemannian
structure. The circle group
$S^1\subset\mathbb{C}$ acts effectively on the
two sphere by rotations about the z-axis.

We will need the following lemma.
\begin{lemma}
Let $M$ be an oriented Riemannian manifold, on
which a Lie group $G$ acts transitively by
orientation preserving isometries. Choose a point
$x\in M$ and denote by $G_x$ the stabilizer at
$x$ and by $\sigma\colon G_x\to SO(T_x M)$ the
isotropy representation. Then:
\begin{enumerate}
\item $G_x$ acts on $SO(T_xM)$ by $g\cdot
A=\sigma(g)\circ A$ .
\item The map $$G\to M\qquad,\qquad g\mapsto
g\cdot x$$ is a principal $G_x$-bundle (where
$G_x$ acts on $G$ by right multiplication).
\item The principal $SO(T_xM)$-bundle
$G\times_{G_x}SO(T_xM)$ is isomorphic to
$SOF(M)$, the bundle of oriented orthonormal
frames on $M$.
\end{enumerate}
\end{lemma}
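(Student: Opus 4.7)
The plan is to treat the three parts in order. Parts (1) and (2) are standard and can be dispatched quickly; the real content is in part (3), where one must write down the explicit isomorphism and check that it respects the principal-bundle structure. For part (1), since $G$ acts by orientation-preserving isometries, the differential $dg_x\colon T_xM\to T_xM$ of each $g\in G_x$ is an orientation-preserving linear isometry, so $\sigma(g)\in SO(T_xM)$; that $g\cdot A=\sigma(g)\circ A$ defines a left action is then immediate from $\sigma$ being a group homomorphism.

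For part (2), I would invoke the standard homogeneous-space machinery. The orbit map $g\mapsto g\cdot x$ factors through a smooth bijection $G/G_x\to M$, which is a diffeomorphism because $G$ acts transitively and $G_x\subset G$ is a closed subgroup. The right action of $G_x$ on $G$ is free and proper, and local sections of $G\to G/G_x$ exist (via the closed subgroup theorem and the exponential map on a complement of $\mathrm{Lie}(G_x)$ in $\mathrm{Lie}(G)$). These local sections yield local trivializations, proving that $G\to M$ is a principal $G_x$-bundle.

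For part (3), the main task, I would construct the explicit map
$$\Phi\colon G\times_{G_x}SO(T_xM)\longrightarrow SOF(M)\ ,\qquad [g,A]\longmapsto dg_x\circ A\ ,$$
where $SOF(M)$ at a point $p$ is identified, after fixing once and for all a base frame $f_0\in SOF_x(M)$, with the space of oriented linear isometries $T_xM\to T_pM$, so that $SO(T_xM)$ acts on the right by precomposition. Well-definedness on equivalence classes reduces, for $h\in G_x$, to the computation
$$\Phi\bigl([gh^{-1},\sigma(h)\circ A]\bigr)=d(gh^{-1})_x\circ\sigma(h)\circ A=dg_x\circ\sigma(h^{-1})\circ\sigma(h)\circ A=dg_x\circ A\ ,$$
using $gh^{-1}\cdot x=g\cdot x$ and the chain rule. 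Equivariance with respect to the right $SO(T_xM)$-action is direct, since $\Phi([g,A\circ B])=(dg_x\circ A)\circ B$. Fiberwise bijectivity over $p=g\cdot x$ holds because $dg_x$ is itself an oriented isometry, so postcomposition with $dg_x$ bijects $SO(T_xM)$ onto oriented isometries $T_xM\to T_pM$. Smoothness descends from the smooth $G$-action and the quotient construction, so $\Phi$ is a smooth, fiberwise-bijective, equivariant morphism of principal bundles over $M$, hence an isomorphism.

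I do not anticipate a serious obstacle. The only bookkeeping subtlety is to set up the identification of the $SOF(M)$-fibers with isometries out of $T_xM$ (rather than out of $\mathbb{R}^n$) compatibly with having $SO(T_xM)$ as the structure group; fixing the base frame $f_0\in SOF_x(M)$ once and for all resolves this cleanly and makes the principal right actions on the two sides of $\Phi$ match up.
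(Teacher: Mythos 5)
Your proposal is correct and takes essentially the same route as the paper: the paper also constructs the map $[g,A]\mapsto g_*\circ A$ for part (3), leaving the verification to the reader, and for part (2) cites the standard homogeneous-space result (Proposition B.18 in \cite{Kar}) that you re-derive from first principles. Your write-up simply fills in the routine checks (well-definedness, equivariance, fiberwise bijectivity) that the paper declares "easily checked."
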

\begin{proof}
(1) is easy. (2) follows from Proposition B.18 in
\cite{Kar} (with $H=G_x$), together with the fact
that $G/G_x$ is diffeomorphic to $M$. To show
(3), consider the map taking an element $[g,A]\in
G\times_{G_x}SO(T_xM)$ to the frame $g_*\circ
A\colon T_xM\xrightarrow{\simeq} T_{g\cdot x}M$.
This map can be easily checked to be an
isomorphism of principal $SO(T_xM)$-bundles.
\end{proof}

\subsection{The trivial $S^1$-equivariant spin$^c$ structure
on $S^2$} \

To define an $S^1$-equivariant spin$^c$ structure
on $S^2$, one needs to describe the space $P$ and
the maps in a commutative diagram of the
following form (see Remark
\ref{Remark_spin-c_str}).

$$
\begin{CD}
S^1\times P       @>>>     P    @<<<     P\times Spin^c(2)\\
@VVV                    @V\Lambda VV          @VVV\\
S^1\times SOF(S^2)       @>>>     SOF(S^2)    @<<<     SOF(S^2)\times SO(2)\\
@VVV                   @V\pi VV           @.\\
S^1\times S^2       @>>>     S^2    @.\\
\end{CD}
$$\\

Set $P=Spin^c(3)$. By  the above lemma, the
choice of a point $x=(0,0,1)\in S^2$ and a basis
for $T_xS^2$ give an isomorphism between the
frame bundle of $S^2$ and
$SO(3)\times_{SO(2)}SO(2)=SO(3)$. Thus $SOF(S^2)
\cong SO(3)$, and our diagram becomes

$$
\begin{CD}
S^1\times Spin^c(3)       @>>>     Spin^c(3)    @<<<     Spin^c(3)\times Spin^c(2)\\
@VVV                    @V\Lambda VV          @VVV\\
S^1\times SO(3)       @>>>     SO(3)    @<<<     SO(3)\times SO(2)\\
@VVV                   @V\pi VV           @.\\
S^1\times S^2       @>>>     S^2    @.\\
\end{CD}
$$\\

Now we describe the maps in this diagram. Denote
\[C_\theta=\left(
                    \begin{array}{ccc}
                    \cos\theta & -\sin\theta & 0\\
                    \sin\theta & \cos\theta  & 0\\
                    0          &     0       & 1
                    \end{array}
\right)\]

\noindent The map $S^1\times S^2\to S^2$ is
rotation about the vertical axis, i.e., $
(e^{i\theta},v)\mapsto C_\theta\cdot v$ .

The second horizontal row gives the actions of
$S^1$ and $SO(2)$ on the frame bundle $SO(3)$.
Those are  given by left and right multiplication
by $C_\theta$, respectively. The covering map
$\pi\colon SO(3)\to S^2$ is given by $A\mapsto
A\cdot x$, and $\Lambda$ is the natural map from
the spin$^c$ group to the special orthogonal
group.

All is left is to describe the actions of $S^1$
and $Spin^c(2)$ on $Spin^c(3)$ (the top row in
the diagram). Since $Spin^c(2)\subset Spin^c(3)$,
this group will act by right-multiplication. The
$S^1$-action on $Spin^c(3)$ is given by
\begin{equation}\label{action}
(e^{i\theta}\;,\;[A,z])\mapsto [x_{\theta/2}\cdot
A\;,\;e^{i\theta/2}\cdot z] \end{equation}
where
$x_\theta=\cos\theta+\sin\theta\cdot e_1e_2\in
Spin(3)$. Note that $x_{\theta/2}$ and
$e^{i\theta/2}$ are defined only up to sign, but
the equivalence class
$[x_{\theta/2}\;,\;e^{i\theta/2}]$ is a well
defined element in $Spin^c(3)$.

We will call this $S^1$-equivariant spin$^c$
structure \emph{the trivial spin$^c$ structure on
the $S^1$-manifold $S^2$}, and denote it by
$P_0$. The reason for using the word `trivial' is
justified by the following lemma.

\begin{lemma}
The determinant line bundle of the trivial
spin$^c$ structure $P_0$ is isomorphic to the
trivial complex line bundle $\mathbb{L}\cong
S^2\times\mathbb{C}$, with the non-trivial
$S^1$-action
$$ S^1\times\mathbb{L}\to\mathbb{L}\qquad,\qquad
(e^{i\theta},(v,z))\mapsto (C_\theta\cdot v,
e^{i\theta}\cdot z)$$
\end{lemma}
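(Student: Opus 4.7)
The plan is to use the description of the determinant line bundle as $\mathbb{L} = P_1\times_{U(1)}\mathbb{C}$, where $P_1 = P_0/Spin(2) = Spin^c(3)/Spin(2)$, and to trivialize the $U(1)$-bundle $P_1$ explicitly. Once $P_1$ is identified with $S^2\times U(1)$, the associated line bundle $\mathbb{L}$ becomes $S^2\times\mathbb{C}$, and the $S^1$-action can be read off directly from the formula (\ref{action}).

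\textbf{Step 1: Trivialize $P_1$.} Consider the map
\[
\Psi\colon P_1 = Spin^c(3)/Spin(2)\longrightarrow S^2\times U(1),\qquad [A,z]\cdot Spin(2)\longmapsto\bigl(\pi(\lambda(A)),\,z^2\bigr),
\]
where $\pi\colon SO(3)\to S^2$ sends $B\mapsto B\cdot x$. I would first check that $\Psi$ is well defined: changing $[A,z]$ by an element $[B,1]$ with $B\in Spin(2)$ replaces $\lambda(A)$ by $\lambda(A)\lambda(B)$, and $\lambda(B)\in SO(2)$ fixes $x$; the alternative sign ambiguity $[A,z]=[-A,-z]$ does not change $z^2$. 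Next I would verify $\Psi$ is a bijection: if two classes have the same image then the $A$-parts differ by an element of $Spin(2)\subset Spin(3)$ (up to sign) and the $z$-parts agree up to sign, which is precisely the defining relation on $P_1$. Finally I would show $\Psi$ is $U(1)$-equivariant: the right action of $[e,c]\in Spin^c(2)$ (representing $c^2\in Spin^c(2)/Spin(2)\cong U(1)$) on $[A,z]$ yields $[A,zc]$, and $\Psi$ sends this to $(\pi(\lambda(A)),z^2c^2)$, matching the right $U(1)$-action on $S^2\times U(1)$.

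\textbf{Step 2: Track the $S^1$-action.} Under $\Psi$, the $S^1$-action from (\ref{action}) becomes
\[
e^{i\theta}\cdot\bigl(\pi(\lambda(A)),z^2\bigr) = \bigl(\pi(\lambda(x_{\theta/2})\lambda(A)),e^{i\theta}z^2\bigr).
\]
The key geometric input is that $\lambda(x_{\theta/2}) = C_\theta$, i.e.\ $x_{\theta/2}$ double-covers rotation by $\theta$ about the $z$-axis. I would verify this by a short Clifford-algebra computation: applying $v\mapsto x_{\theta/2}\,v\,x_{\theta/2}^t$ to $e_1,e_2,e_3$ using $e_ie_j=-e_je_i$ and $e_i^2=-1$ yields the matrix $C_\theta$. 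Hence the $S^1$-action on $S^2\times U(1)$ is
\[
e^{i\theta}\cdot(v,\zeta) = (C_\theta\,v,\,e^{i\theta}\zeta).
\]

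\textbf{Step 3: Pass to the associated bundle.} The isomorphism $\Psi$ induces an isomorphism
\[
\mathbb{L}=P_1\times_{U(1)}\mathbb{C}\;\xrightarrow{\;\sim\;}\;(S^2\times U(1))\times_{U(1)}\mathbb{C}\;\xrightarrow{\;\sim\;}\;S^2\times\mathbb{C},\qquad [(v,\zeta),w]\longmapsto(v,\zeta w).
\]
Pushing the $S^1$-action through this chain of identifications gives $(v,u)\mapsto(C_\theta v,e^{i\theta}u)$, which is exactly the action asserted in the lemma.

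The main obstacle is the bookkeeping in Step 1, particularly checking well-definedness and injectivity of $\Psi$ in the presence of the two distinct $\mathbb{Z}/2$-quotients (one defining $Spin^c(3)$ from $Spin(3)\times U(1)$, the other implicit in $Spin(3)\to SO(3)$); once $\Psi$ is in hand, Steps 2 and 3 reduce to the identity $\lambda(x_{\theta/2})=C_\theta$ and formal manipulation of associated bundles.
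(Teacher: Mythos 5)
Your proof is correct and essentially the same as the paper's: both write down the explicit trivialization sending $[A,z]$ (up to the fiber direction) to $(\lambda(A)\cdot x, z^2 w)$, and both hinge on the identity $\lambda(x_{\theta/2})=C_\theta$. The paper works directly with $\mathbb{L}=Spin^c(3)\times_{Spin^c(2)}\mathbb{C}$ and states the map in one line, whereas you factor through $P_1=P_0/Spin(2)\cong S^2\times U(1)$ and then pass to $P_1\times_{U(1)}\mathbb{C}$; this is the same computation carried out in two stages, and your extra care with well-definedness and injectivity across the two $\mathbb{Z}/2$-quotients is exactly what the paper compresses into ``it is easy to check.''
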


\begin{proof}
It is easy to check that the
map
$$\mathbb{L}=Spin^c(3)\times_{Spin^c(2)}\mathbb{C}\to
 S^2\times\mathbb{C}\qquad,\qquad
 [[A,z],w]\mapsto (\lambda(A)\cdot x,z^2w)\ ,$$
where $\lambda\colon Spin(3)\to SO(3)$ is the
double cover and $x=(0,0,1)$ is the north pole,
is an isomorphism of complex line bundles. The
fact that $S^1$ acts on $\mathbb{L}$ via
(\ref{action}), and that
$\lambda(x_{\theta/2})=C_\theta$, implies that
the $S^1$ action on $S^2\times\mathbb{C}$,
induced by the above isomorphism, is the one
stated in the lemma.
\end{proof}

Another reason for calling $P_0$ a trivial
spin$^c$ structure, is that the quantization
$Q(S^2)$ (with respect to $P_0$) is the zero
space. We do not prove this fact now, since it
will follow from a more general statement (see
Claim \ref{Quan_S^2}).

\subsection{Classifying all spin$^c$ structures
on $S^2$.} \

Quantizing the trivial spin$^c$ structure on
$S^2$ is not interesting, since the quantization
is the zero space. However, once we have an
equivariant spin$^c$ structure on a manifold, we
can generate all the other equivariant spin$^c$
structures by twisting it with complex
equivariant Hermitian line bundles (or,
equivalently, with equivariant principal
$U(1)$-bundles). For details on this process, see
Appendix D, \S 2.7 in \cite{Kar}. We will use
this technique to construct all spin$^c$
structures on our $S^1$-manifold $S^2$.

It is known that all (non-equivariant) complex
Hermitian line bundles over $S^2$ are classified
by $H^2(S^2;\mathbb{Z})\cong\mathbb{Z}$, i.e., by
the integers. The $S^1$-equivariant line bundles
over $S^2$ are classified by a pair of integers
(for instance, the weights of the $S^1$-action on
the fibers at the poles). This is well known, but
because we couldn't find a direct reference, we
will give a direct proof of this fact.\\Here is
an explicit construction of an equivariant line
bundle over $S^2$, determined by a pair of
integer.

\begin{definition}
Given a pair of integers $(k,n)$, define an
$S^1$-equivariant complex Hermitian line bundle
$L_{k,n}$ as follows:
\begin{enumerate}
\item As a complex line bundle, $$L_{k,n}=Spin(3)\times_{Spin(2)}\mathbb{C}\cong
S^3\times_{S^1}\mathbb{C}\ ,$$ where
$Spin(2)\cong S^1$ acts on $\mathbb{C}$ with
weight $n$ and on $Spin(3)$ by right
multiplication.
\item The circle group $S^1$ acts on $L_{k,n}$ by
$$ S^1\times L_{k,n}\to L_{k,n}\qquad,\qquad
\left( e^{i\theta},[A,z]\right)\mapsto
[x_{\theta/2}\cdot
A,e^{\frac{i\theta}{2}(n+2k)}\cdot z]$$ where
$x_\theta=\cos\theta+\sin\theta\cdot e_1e_2\in
Spin(2)\subset Spin(3)$.
\end{enumerate}
\end{definition}

And now we prove:

\begin{claim}
Every $S^1$ equivariant line bundle over $S^2$ is
isomorphic to $L_{k,n}$, for some
$k,n\in\mathbb{Z}$.
\end{claim}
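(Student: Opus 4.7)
The plan is to classify $S^1$-equivariant Hermitian line bundles on $S^2$ by the pair of integer weights at the two fixed points $N=(0,0,1)$ and $S=(0,0,-1)$, and then match this classification with the family $L_{k,n}$ by a direct computation of those weights.

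First I would cover $S^2$ by the two $S^1$-invariant open sets $U = S^2 \setminus \{S\}$ and $V = S^2 \setminus \{N\}$. Stereographic projection identifies each of them $S^1$-equivariantly with $\mathbb{C}$ carrying the standard rotation action, and the linear contraction $(z,t)\mapsto(1-t)z$ gives an $S^1$-equivariant deformation retraction of $U$ onto $\{N\}$ (and similarly of $V$ onto $\{S\}$). Given any $S^1$-equivariant Hermitian line bundle $L \to S^2$, equivariant homotopy invariance then yields $L|_U \cong U \times \mathbb{C}_a$ and $L|_V \cong V \times \mathbb{C}_b$ as equivariant bundles, where $a,b \in \mathbb{Z}$ are the weights of the $S^1$-actions on the fibers $L_N$ and $L_S$.

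Next I would analyze the transition function $g \colon U \cap V \to U(1)$ between these trivializations. Equivariance forces $g(e^{i\theta}\cdot x) = e^{i(a-b)\theta}g(x)$, and under the $S^1$-equivariant diffeomorphism $U \cap V \cong S^1 \times \mathbb{R}$ this pins $g$ down to the form $g(e^{i\phi},t) = e^{i(a-b)\phi} g_0(t)$ for some $g_0 \colon \mathbb{R} \to U(1)$. Equivariant changes of trivialization on $U$ and $V$ correspond to $S^1$-invariant maps $U \to U(1)$ and $V \to U(1)$; since the orbit spaces $U/S^1$ and $V/S^1$ are intervals, any such function is nullhomotopic, and this coboundary freedom absorbs $g_0$ up to equivariant homotopy. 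Hence the equivariant isomorphism class of $L$ is determined by the pair $(a,b) \in \mathbb{Z}^2$.

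Finally I would compute the weights of $L_{k,n}$ at the two poles. At $N$, representing a fiber element by $[I,z]$ and moving $x_{\theta/2} \in Spin(2)$ across the equivalence (on which $Spin(2)$ acts with weight $n$), the action $e^{i\theta}\cdot[I,z] = [x_{\theta/2}, e^{i\theta(n+2k)/2}z]$ rewrites as $[I, e^{i\theta(n+k)}z]$, giving weight $a = n+k$. A similar calculation at $S$, using a Clifford lift of the rotation sending $N$ to $S$, gives weight $b = k$. Since the map $(k,n) \mapsto (n+k,k)$ is a bijection $\mathbb{Z}^2 \to \mathbb{Z}^2$, every equivariant $L$ with weights $(a,b)$ is isomorphic to $L_{b,\,a-b}$. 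The main obstacle is the transition-function step: one must verify that the invariant coboundaries really do exhaust the $g_0$ degree of freedom, which reduces to the contractibility of the orbit spaces $U/S^1$ and $V/S^1$.
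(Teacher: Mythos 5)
Your proof is correct, but it takes a genuinely different route from the paper's. The paper first classifies the underlying non-equivariant bundle, writing $L \cong S^3 \times_{S^1}\mathbb{C}$ with weight $n$, then composes the given action $\rho$ with a reference ``untwist'' $\eta$ to produce a map $\delta$ that lifts the trivial action on $S^2$; connectedness of $S^2$ forces $\delta$ to act on every fiber with a single weight $k$, which pins the action down as that of $L_{k,n}$. Your argument is \v{C}ech-theoretic instead: cover $S^2$ by the two invariant hemispherical opens $U$, $V$, use equivariant contractibility of each to trivialize $L|_U$ and $L|_V$ with the isotropy weights $a,b$ at the poles, and then analyze the equivariant transition function on the annulus. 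Both approaches yield the same classification; the paper's is cleaner for $S^2$ specifically because the homogeneous-space model $Spin(3)\times_{Spin(2)}\mathbb{C}$ is already available and avoids any homotopy-invariance input, while yours is more portable to other $S^1$-manifolds built from two invariant equivariantly contractible opens. The one step you correctly flag as needing care — that the invariant coboundaries exhaust the $g_0$ degree of freedom — does go through: lift $g_0$ to $\mathbb{R}$ over the simply connected orbit space $(U\cap V)/S^1$ and split by a bump function supported near the two ends, so the two factors extend over $U/S^1$ and $V/S^1$ respectively. Your weight computation $(a,b)=(n+k,\,k)$ agrees with the paper's convention (as confirmed by the subsequent claim that the determinant bundle $\mathbb{L}_{k,n}\cong L_{2k+1,2n}$ has weights $2k+2n+1$ and $2k+1$), and $(k,n)\mapsto(n+k,k)$ is indeed a bijection on $\mathbb{Z}^2$, so the argument is complete.
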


\begin{proof}
Let $L$ be an $S^1$-equivariant line bundle over
$S^2$. Since $L$ is, in particular, an ordinary
line bundle, we can assume it is of the form
$L=S^3\times_{S^1}\mathbb{C}$ where $S^1$ acts on
$\mathbb{C}$ with weight $n$. Also, since $L$ is
an equivariant line bundle, we have a map
$$\rho\colon S^1\times L\to L\qquad,\qquad
(e^{i\theta},x)\mapsto e^{i\theta}\cdot x\ .$$

Define a map $$\eta\colon S^1\times L\to
L\qquad,\qquad (e^{i\theta},[A,z])\mapsto
[x_{-\theta/2}\cdot A,e^{-i\theta n/2}z]\ .$$
This map is well defined.

By composing $\rho$ and $\eta$ we get a third map
$$ \delta\colon S^1\times L\to L$$
which lifts the trivial action on $S^2$. Since
$S^2$ is connected, this composed action will act
on all the  fibers of $L$ with one fixed weight
$k$. Therefore, we get
$$e^{i\theta}\cdot [x_{-\theta/2}\cdot A,e^{-i\theta
n/2}z]=[A,e^{ik\theta}z]$$

and after a change of variables, the given action
$S^1\circlearrowright L$ is
$$e^{i\theta}\cdot[B,w]=[x_{\theta/2}\cdot
B,e^{i\theta n/2+ik\theta}w]\ .$$ This means that
$L$ is isomorphic to $L_{k,n}$.
\end{proof}

We now `twist' the trivial spin$^c$ structure by
$U(L_{k,n})$, the unit circle bundle of
$L_{k,n}$, to get nontrivial spin$^c$ structures
on $S^2$. Observe that the group $U(1)$ acts on
$Spin^c(3)$ from the right by multiplication by
elements of the form $[1,c]\in Spin^c(3)$.

\begin{definition}
$$P_{k,n}=P_0\times_{U(1)}
U(L_{k,n})$$ where we quotient by the
anti-diagonal action of $U(1)$.
\end{definition}

This is an $S^1$-equivariant spin$^c$ structure
on $S^2$. The principal action of $ Spin^c(2)$
comes from acting from the right on the $P_0\cong
Spin^c(3)$ component, and the left $S^1$-action
is induced from the diagonal action on $P_0\times
L_{k,n}$.

\begin{claim}
Fix $(k,n)\in\mathbb{Z}^2$, and denote by
$\mathbb{L}=\mathbb{L}_{k,n}$ the determinant
line bundle associated to the spin$^c$ structure
$P_{k,n}$ on $S^2$. Let $N=(0,0,1)\ ,\
S=(0,0,-1)\in S^2$ be the north and the south
poles.\\ Then $S^1$ acts on $\mathbb{L}|_N$ with
weight $2k+2n+1$ and on $\mathbb{L}|_S$ with
weight $2k+1$.
\end{claim}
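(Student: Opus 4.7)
The plan is to decompose $\mathbb{L}_{k,n}$ as a tensor product of bundles whose weights are easy to compute, and then read off the answer at the poles. The key identification is
$$\mathbb{L}_{k,n} \;\cong\; \mathbb{L}_0 \otimes L_{k,n}^{\otimes 2}$$
as $S^1$-equivariant Hermitian line bundles on $S^2$. This reflects the fact that the twisting construction $P_{k,n} = P_0 \times_{U(1)} U(L_{k,n})$ interacts with the determinant representation $[x,c]\mapsto c^2$ of $Spin^c(2)$ by squaring the contribution of the $U(1)$-factor. I would verify this isomorphism by chasing a representative $[[p_0,u_0],1]$ through the two equivalence relations in play---the anti-diagonal $U(1)$-quotient defining $P_{k,n}$ and the $Spin^c(2)$-quotient defining $\mathbb{L}_{k,n}$. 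The previous lemma then supplies $\mathbb{L}_0 \cong S^2 \times \mathbb{C}$ with $S^1$-weight $+1$ at every point.

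Since weights add under tensor products of equivariant line bundles, it suffices to compute the $S^1$-weight of $L_{k,n}$ at the two poles. At the north pole I would use the representative $[1,1] \in L_{k,n}|_N$ and apply the definition directly:
$$e^{i\theta}\cdot[1,1] \;=\; [x_{\theta/2},\,e^{i\theta(n+2k)/2}] \;=\; [1,\,x_{\theta/2}\cdot e^{i\theta(n+2k)/2}] \;=\; [1,\,e^{i\theta(n+k)}],$$
where the second equality uses that $x_{\theta/2}\in Spin(2)$ acts on $\mathbb{C}$ with weight $n$. So $L_{k,n}|_N$ carries weight $n+k$, and $\mathbb{L}_{k,n}|_N$ carries weight $1+2(n+k) = 2n+2k+1$.

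For the south pole, I would fix the lift $B = e_1 e_3 \in Spin(3)$ of $S$ (so $\lambda(B)\cdot(0,0,1) = S$) and establish the commutation identity
$$x_{\theta/2}\cdot e_1 e_3 \;=\; e_1 e_3 \cdot x_{-\theta/2}$$
by a short calculation in the Clifford algebra (the sign flip is the geometric statement that conjugation by $e_1 e_3$ reverses the orientation of the $e_1 e_2$-plane). Using this,
$$e^{i\theta}\cdot[B,1] \;=\; [x_{\theta/2}B,\,e^{i\theta(n+2k)/2}] \;=\; [B\cdot x_{-\theta/2},\,e^{i\theta(n+2k)/2}] \;=\; [B,\,e^{-in\theta/2}\cdot e^{i\theta(n+2k)/2}] \;=\; [B,\,e^{ik\theta}],$$
yielding weight $k$ for $L_{k,n}|_S$ and $1+2k = 2k+1$ for $\mathbb{L}_{k,n}|_S$, as claimed. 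The main subtlety is the initial tensor-product identification: one must carefully track how the anti-diagonal $U(1)$-relations in $P_{k,n}$ interact with the $Spin^c(2)$-quotient to produce exactly the factor $c^2$, which is what forces the factor of $2$ in the exponents; the remaining pole-wise computations are then routine.
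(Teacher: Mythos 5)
Your proof is correct, and it takes a genuinely different route from the paper. The paper works directly with the unwieldy expression
$\mathbb{L}_{k,n}=\left[Spin^c(3)\times_{U(1)}\left(S^3\times_{S^1}S^1\right)\right]\times_{Spin^c(2)}\mathbb{C}$,
choosing representatives $\left[\left[[A,1],[A,1]\right],u\right]$ with $A=1$ at the north pole and $A=e_2e_3$ at the south pole, and tracking the $S^1$-action through all three quotients at once. You instead factor
$\mathbb{L}_{k,n}\cong\mathbb{L}_0\otimes L_{k,n}^{\otimes 2}$,
invoke the earlier lemma that $\mathbb{L}_0$ is trivial with constant weight $+1$, and reduce the problem to a single weight computation in $L_{k,n}$ at each pole. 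Your computations there are correct (including the Clifford commutation identity $x_{\theta/2}\cdot e_1e_3=e_1e_3\cdot x_{-\theta/2}$, which mirrors the paper's $A\cdot x_\theta=x_{-\theta}\cdot A$ with $A=e_2e_3$). The trade-off: your route is more conceptual and shorter, and it makes the source of the odd parity ($2(\cdot)+1$) transparent from the tensor decomposition; the paper's route is a pure brute-force verification that avoids needing to establish the twisting identity. The one step you gesture at rather than carry out is the isomorphism $\mathbb{L}_{k,n}\cong\mathbb{L}_0\otimes L_{k,n}^{\otimes 2}$; this is standard (twisting $P$ by $U(L)$ replaces the determinant bundle by $\mathbb{L}\otimes L^{\otimes 2}$, because the determinant representation $[x,z]\mapsto z^2$ squares the $U(1)$-factor), and a complete proof should spell out the element chase you sketched, but there is no gap in the idea.
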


\begin{proof}
The determinant line bundle is
$$\mathbb{L}=P_{k,n}\times_{Spin^c(2)}\mathbb{C}=\left[Spin^c(3)\times_{U(1)}\left(S^3\times_{S^1}S^1\right)\right]
\times_{Spin^c(2)}\mathbb{C}\ .$$ An element of $\mathbb{L}$ can be
written in the form $\left[\left[[A,1],[A,1]\right],u\right]$, where
$A\in Spin(3)\cong S^3$ and $u\in\mathbb{C}$.
\begin{enumerate}
\item For the north pole $N=(0,0,1)$, can choose $A=1\in Spin(3)$,
hence an element of $\mathbb{L}|_N$ will have the form
$\left[\left[[1,1],[1,1]\right],u\right]$. Let $e^{i\theta}\in S^1$,
act on $\mathbb{L}|_N$, to get
\begin{multline*}
\left[\left[[x_{\theta/2},e^{i\theta/2}],[x_{
\theta/2},e^{i\theta(n+2k)/2}]\right],u\right]=\left[\left[[1,1],[x_{
\theta/2},e^{i\theta(n+2k)/2}]\right],e^{i\theta}u\right]=\\
\quad=\left[\left[[1,1],[x_{ \theta/2},e^{i\theta\cdot
n/2}]\right],e^{i\theta(1+2k)}u\right]=
\left[\left[[1,1],[1,e^{i\theta\cdot n/2}\cdot e^{i\theta\cdot
n/2}]\right],e^{i\theta(1+2k)}u\right]=\\
\quad=\left[\left[[1,1],[1,1]\right],e^{i\theta(1+2k+2n)}u\right]\hfill\\
\end{multline*}
and therefore the weight on $\mathbb{L}|_N$ is $1+2k+2n$.
\item For the south pole $S=(0,0,-1)$ choose $A=e_2e_3$. We
compute again the action of an element $e^{i\theta}$ on
$\left[\left[[A,1],[A,1]\right],u\right]$, and use the identity
$A\cdot x_\theta=x_{-\theta}\cdot A$ for any $x_\theta\in
Spin(2)\subset Spin(3)$.
\begin{multline*}
\left[\left[[x_{\theta/2} A,e^{i\theta/2}],[x_{ \theta/2}
A,e^{i\theta(n+2k)/2}]\right],u\right]=\left[\left[[A ,1],[Ax_{-
\theta/2},e^{i\theta(n+2k)/2}]\right],e^{i\theta}u\right]=\\
\quad=\left[\left[[A,1],[Ax_{-\theta/2},e^{i\theta\cdot
n/2}]\right],e^{i\theta(1+2k)}u\right]=
\left[\left[[A,1],[A,e^{-i\theta\cdot n/2}\cdot e^{i\theta\cdot
n/2}]\right],e^{i\theta(1+2k)}u\right]=\\
\quad=\left[\left[[A,1],[A,1]\right],e^{i\theta(1+2k)}u\right]\hfill\\
\end{multline*}
and therefore the weight on $\mathbb{L}|_S$ is $2k+1$.
\end{enumerate}
\end{proof}

\begin{remark}
Note that the $2k+2n+1$ and $2k+1$ are both odd numbers. This is not
surprising in view of Lemma \ref{weight-lemma}. The isotropy weight
at $N$ (or at $S$) is $\pm1$ and its sum with the weight on
$\mathbb{L}_N$  (or on $\mathbb{L}_S$) must be even. This implies
that the weights of $S^1\circlearrowright\mathbb{L}_{\{N,S\}}$ must
be odd.
\end{remark}
\begin{remark}
The above claim implies that the determinant line
bundle of the spin$^c$ structure $P_{k,n}$ is
isomorphic to $L_{2k+1,2n}$, i.e., $\mathbb
L_{k,n}\cong L_{2k+1,2n}$ .
\end{remark}

\begin{claim} \label{Quan_S^2}
Fix $(k,n)\in\mathbb{Z}^2$ and denote by
$Q_{k,n}(S^2)$ the quantization of the spin$^c$
structure $P_{k,n}$ on $S^2$. Then the
multiplicity of a weight $\beta\in
Lie(S^1)^*\cong\mathbb{Z}$ in $Q_{k,n}(S^2)$ is
given by
$$\#(\beta,Q_{k,n}(S^2))=
    \begin{cases}
    \ 1   &\quad 0<\beta-k\le n\\
    -1    &\quad n<\beta-k\le 0\\
    \  0  &\quad \text{\ \ otherwise}\\
    \end{cases}$$
    In particular, if $n=0$, then $Q_{k,0}(S^2)$ is
    the zero representation.
\end{claim}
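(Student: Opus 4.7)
The plan is to apply the Kostant formula for isolated fixed points (Theorem \ref{Kost}) to the $S^1$-action on $S^2$, whose fixed point set is exactly $\{N,S\}$. All the local data needed at $N$ and $S$ was essentially computed in the previous claim.

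First, I would identify the isotropy and line bundle weights. With the complex structures on $T_N S^2$ and $T_S S^2$ induced by the outward normal orientation, the rotation about the $z$-axis acts on $T_N S^2$ with weight $\alpha_N=+1$ and on $T_S S^2$ with weight $\alpha_S=-1$ (the two are opposite because the outward normals $\pm e_3$ at the two poles induce opposite complex structures on the common real tangent plane). The previous claim gives $\mu_N=2k+2n+1$ and $\mu_S=2k+1$. To polarize, I would keep the natural complex structure at $N$ and reverse the one at $S$, so that after polarization both isotropy weights equal $+1$. Reversing the complex structure at $S$ reverses the induced orientation on a real $2$-plane, so $(-1)^N=+1$ while $(-1)^S=-1$.

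Next, I would compute $\overline N_p$. Since $m=1$, the defining set contains at most one element, and with polarized weight $\alpha_p=+1$ one has $\overline N_p(\gamma)=1$ exactly when $-\gamma\in\{1/2,3/2,5/2,\dots\}$, and $\overline N_p(\gamma)=0$ otherwise. Substituting:
\begin{align*}
\overline N_N\!\left(\beta-\tfrac12\mu_N\right) &= \overline N_N\!\left(\beta-k-n-\tfrac12\right) = [\beta\le k+n],\\
\overline N_S\!\left(\beta-\tfrac12\mu_S\right) &= \overline N_S\!\left(\beta-k-\tfrac12\right) = [\beta\le k],
\end{align*}
where $[\cdot]$ is the Iverson bracket and I use that every weight $\beta$ is an integer, so the required half-integrality is automatic. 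Theorem \ref{Kost} then gives
\[
\#(\beta,Q_{k,n}(S^2)) = [\beta\le k+n] - [\beta\le k].
\]

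Finally, a short case split on the sign of $n$ finishes the argument: if $n>0$ the difference is $1$ exactly for $k<\beta\le k+n$ and $0$ elsewhere; if $n<0$ the difference is $-1$ exactly for $k+n<\beta\le k$ and $0$ elsewhere; if $n=0$ the two brackets cancel identically, giving the zero virtual representation. This matches the three cases in the statement. The only real subtlety in the plan, and the step I would be most careful about, is the orientation bookkeeping at $S$: justifying that the outward normal produces opposite isotropy weights at the two poles, and that the sign $(-1)^S=-1$ arises precisely because polarizing forces a reversal of the complex structure on an even-but-not-divisible-by-four dimensional tangent space.
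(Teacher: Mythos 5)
Your proposal is correct and uses exactly the same approach as the paper: apply the isolated-fixed-point Kostant formula (Theorem \ref{Kost}) at the poles with the weights computed in the preceding claim. In fact your write-up supplies the orientation/sign bookkeeping (why $(-1)^N=+1$, $(-1)^S=-1$ after polarization, via the weight-$\pm1$ isotropy representations at the two poles and the orientation reversal on an odd-complex-dimensional tangent space) that the paper's proof silently absorbs into the minus sign in front of $\overline N_{(0,0,-1)}$; the rest of the calculation, reducing to $[\beta\le k+n]-[\beta\le k]$, matches the paper's.
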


\begin{proof}
By the Kostant formula for spin$^c$-quantization
(Theorem \ref{Kost}) the multiplicity is given by
$$\#(\beta,Q_{k,n}(S^2))=\overline
N_{(0,0,1)}\left(\beta-\frac{1+2k+2n}{2}\right)-\overline
N_{(0,0,-1)}\left(\beta-\frac{1+2k}{2}\right)\
.$$ The definition of $\overline N_p$ implies
that
$$\overline N_{(0,0,1)}\left(\beta-\frac{1+2k+2n}{2}\right)=
\begin{cases}
1 & \quad\beta-k\le n\\
0 & \quad\beta-k>n
\end{cases}$$
and similarly,
$$\overline N_{(0,0,-1)}\left(\beta-\frac{1+2k}{2}\right)=
\begin{cases}
1 & \quad\beta-k\le 0\\
0 & \quad\beta-k>0
\end{cases}\ .$$
Using that, one can compute $\#(\beta,Q_{k,n}(S^2))$ and get the
required result.
\end{proof}

\subsection{Cutting a spin$^c$ structure on $S^2$}
\

Now we get to the cutting of the spin$^c$
structure $P_{k,n}$ on $S^2$. Let $\mathbb{L}$ be
the determinant line bundle of $P_{k,n}$. We take
the equator
$Z=\{(\cos\alpha,\sin\alpha,0)\}\subset S^2$ to
be our reducible splitting hypersurface (see
\S\ref{spin-c-cutting}). The cut spaces
$M_{cut}^\pm$ are both diffeomorphic to $S^2$,
and we would like to know what are
$(P_{k,n})_{cut}^\pm$. Because the cut spaces are
spheres again, we must have
$$(P_{k,n})_{cut}^\pm=P_{k^\pm,n^\pm}\qquad\text{for some
integers}\qquad k^\pm,n^\pm\ .$$ Corollary
\ref{cor-L_red} implies that $S^1$ acts on
$\mathbb{L}_{cut}^-|_N$ and on
$\mathbb{L}_{cut}^+|_S$ with weight $+1$.  Lemma
\ref{L-det-M-plus-minus} implies that the weight
of the $S^1$ action on $\mathbb{L}|_N$ and
$\mathbb{L}|_S$ will be equal to the weight of
the action on $\mathbb{L}_{cut}^+|_N$ and
$\mathbb{L}_{cut}^-|_S$, respectively. From this
we get the equations $$2k^++1=1\quad,\quad
2k^++2n^++1=2k+2n+1\quad,\quad
2k^-+2n^-+1=1\quad,\quad 2k^-+1=2k+1$$ which
yield $k^+=0,\ n^+=k+n,\ k^-=k,\ n^-=-k$.
Therefore we obtain:
$$(P_{k,n})_{cut}^+=P_{0,k+n}\qquad,\qquad
(P_{k,n})_{cut}^-=P_{k,-k}\ .$$

\begin{remark}
We see that there is no symmetry between the
spin$^c$ structures on the `$+$' and `$-$' cut
spaces as one might expect. This is because the
definition of the covering map $SO(3)\to S^2$
involved a choice of a point (in our case - the
north pole), which `broke' the symmetry of the
two-sphere.
\end{remark}

The quantization of the cut spaces is thus
obtained from Claim \ref{Quan_S^2}. For the `+'
cut space we get, for any weight
$\beta\in\mathbb{Z}$:
$$\#(\beta,Q_{k,n}^+(S^2))=\#(\beta,Q_{0,k+n}(S^2))=
    \begin{cases}
    \ 1   &\quad -k<\beta-k\le n\\
    -1    &\quad n<\beta-k\le -k\\
    \  0  &\quad \text{\ \ \ \ otherwise}\\
    \end{cases}$$

and for the `$-$' cut space:

$$\#(\beta,Q_{k,n}^-(S^2))=\#(\beta,Q_{k,-k}(S^2))=
    \begin{cases}
    \ 1   &\quad 0<\beta-k\le -k\\
    -1    &\quad -k<\beta-k\le 0\\
    \  0  &\quad \text{\ \ \ \ otherwise}\\
    \end{cases}$$

It is an easy exercise to check that
$$\#(\beta,Q_{k,n}(S^2))=\#(\beta,Q_{k,n}^+(S^2))+\#(\beta,Q_{k,n}^-(S^2))$$
and this implies that as virtual
$S^1$-representations, we have
$$Q_{k,n}(S^2)=Q_{k,n}^-(S^2)\oplus
Q_{k,n}^+(S^2)$$

As expected, we have additivity of spin$^c$
quantization under cutting in this example.

\subsection{Multiplicity Diagrams}\ \\
The $S^1$-equivariant spin$^c$ quantization of a
manifold $M$ can be described using multiplicity
diagrams as follows. Above each integer on the
real line, we write the multiplicity of the
weight represented by this integer, if it is
nonzero.

For example, if $n,k>0$, then the quantization
$Q_{k,n}$ of $S^2$ is given by the following diagram.\\

{\small \hspace{121pt} +1 \hspace{7pt} +1
\hspace{40pt}$\cdots$\hspace{30pt} +1
\hspace{7pt} +1 \hspace{7pt}
+1\\[-5pt]
\line(1,0){20}\put(0,-2){$\shortmid$}
\line(1,0){95}\put(0,-2){$\shortmid$}
\line(1,0){25}\put(0,-2.5){$\bullet$}
\line(1,0){25}\put(0,-2.5){$\bullet$}
\line(1,0){100}\put(0,-2.5){$\bullet$}
\line(1,0){25}\put(0,-2.5){$\bullet$}
\line(1,0){25}\put(0,-2.5){$\bullet$}
\line(1,0){25}\put(0,-2){$\shortmid$}
\line(1,0){25}\put(-1,-2.3){$\rightarrow$}\\
$\phantom{\ }\hspace{15.5pt}0\hspace{90.5pt} k
\hspace{15pt}k\negmedspace+\negmedspace1
\hspace{158pt}n\negmedspace+\negmedspace k$\\[20pt]}

The quantization of the `+' cut space,
$Q_{k,n}^+$, which is equal to $Q_{0,k+n}$, will
have the following diagram.\\[10pt]

{\small
\hspace{29pt}+1\hspace{25pt}$\cdots$\hspace{22pt}+1\hspace{8pt}
+1 \hspace{7pt} +1
\hspace{40pt}$\cdots$\hspace{30pt} +1
\hspace{7pt} +1 \hspace{7pt}
+1\\[-5pt]
\line(1,0){20}\put(0,-2){$\shortmid$}
\line(1,0){25}\put(0,-2.5){$\bullet$}
\line(1,0){70}\put(0,-2.5){$\bullet$}
\line(1,0){25}\put(0,-2.5){$\bullet$}
\line(1,0){25}\put(0,-2.5){$\bullet$}
\line(1,0){100}\put(0,-2.5){$\bullet$}
\line(1,0){25}\put(0,-2.5){$\bullet$}
\line(1,0){25}\put(0,-2.5){$\bullet$}
\line(1,0){25}\put(0,-2){$\shortmid$}
\line(1,0){25}\put(-1,-2.3){$\rightarrow$}\\
$\phantom{\
}\hspace{15.5pt}0\hspace{22pt}1\hspace{65pt} k
\hspace{14pt}k\negmedspace+\negmedspace1
\hspace{158pt}n\negmedspace+\negmedspace
k$\\[20pt]
}

Finally, $Q_{k,n}^-=Q_{k,-k}$ is given
by\\[10pt]

{\small
\hspace{29pt}$-1$\hspace{25pt}$\cdots$\hspace{22pt}$-1$\\[-5pt]
\line(1,0){20}\put(0,-2){$\shortmid$}
\line(1,0){25}\put(0,-2.5){$\bullet$}
\line(1,0){70}\put(0,-2.5){$\bullet$}
\line(1,0){25}\put(0,-2){$\shortmid$}
\line(1,0){225}\put(-1,-2.3){$\rightarrow$}\\
$\phantom{\
}\hspace{15.5pt}0\hspace{22pt}1\hspace{65pt} k$
\\[20pt]
} Clearly, one can see that the diagram of
$Q_{k,n}$ is the `sum' of the diagrams of
$Q_{k,n}^\pm$.

Let us present another case, where only positive
multiplicities occur in the quantization of all
three spaces (the original manifold $S^2$ and the
cut spaces). This happens if $k<0<n+k$. In this
case, the diagram for $Q_{k,n}$ is as
follows.\\[10pt]

{\small
\hspace{29pt}+1\hspace{25pt}$\cdots$\hspace{22pt}+1\hspace{8pt}
+1 \hspace{7pt} +1
\hspace{40pt}$\cdots$\hspace{30pt} +1
\hspace{7pt} +1 \hspace{7pt}
+1\\[-5pt]
\line(1,0){20}\put(0,-2){$\shortmid$}
\line(1,0){25}\put(0,-2.5){$\bullet$}
\line(1,0){70}\put(0,-2.5){$\bullet$}
\line(1,0){25}\put(0,-2.5){$\bullet$}
\line(1,0){25}\put(0,-2.5){$\bullet$}
\line(1,0){100}\put(0,-2.5){$\bullet$}
\line(1,0){25}\put(0,-2.5){$\bullet$}
\line(1,0){25}\put(0,-2.5){$\bullet$}
\line(1,0){25}\put(0,-2){$\shortmid$}
\line(1,0){25}\put(-1,-2.3){$\rightarrow$}\\
$\phantom{\
}\hspace{15.5pt}k\hspace{15pt}k\negmedspace+\negmedspace1\hspace{85pt}
0 \hspace{20.5pt}1
\hspace{138pt}n\negmedspace+\negmedspace
k$\\[20pt]
} The diagram for $Q_{k,n}^+=Q_{0,n+k}$
is\\[10pt]

{\small \hspace{147pt} +1
\hspace{40pt}$\cdots$\hspace{30pt} +1
\hspace{7pt} +1 \hspace{7pt}
+1\\[-5pt]
\line(1,0){20}\put(0,-2){$\shortmid$}
\line(1,0){95}
\line(1,0){25}\put(0,-2.5){$\shortmid$}
\line(1,0){25}\put(0,-2.5){$\bullet$}
\line(1,0){100}\put(0,-2.5){$\bullet$}
\line(1,0){25}\put(0,-2.5){$\bullet$}
\line(1,0){25}\put(0,-2.5){$\bullet$}
\line(1,0){25}\put(0,-2){$\shortmid$}
\line(1,0){25}\put(-1,-2.3){$\rightarrow$}\\
$\phantom{\ }\hspace{15.5pt}k\hspace{115.5pt} 0
\hspace{22pt}1
\hspace{139pt}n\negmedspace+\negmedspace k$\\[20pt]}

and for $Q_{k,n}^-=Q_{k,-k}$ we have\\[10pt]

{\small
\hspace{29pt}$+1$\hspace{25pt}$\cdots$\hspace{22pt}$+1$\hspace{13pt}$+1$\\[-5pt]
\line(1,0){20}\put(0,-2){$\shortmid$}
\line(1,0){25}\put(0,-2.5){$\bullet$}
\line(1,0){70}\put(0,-2.5){$\bullet$}
\line(1,0){25}\put(0,-2.5){$\bullet$}
\line(1,0){27}\put(0,-2){$\shortmid$}
\line(1,0){198}\put(-1,-2.3){$\rightarrow$}\\
$\phantom{\ }\hspace{15.5pt}k\hspace{117pt} 0$
\\[20pt]
}

\noindent and again the additivity is clear.

The additivity is clearer in the last set of
diagrams, as we can actually see the diagram of
$Q_{k,n}$ being cut into two parts. It seems like
the diagram was cut at some point between 0 and
1. The point at which the cutting is done depends
on the spin$^c$ structure on $\mathbb{C}$ that
was chosen during the cutting process (see
\S\ref{spin-c-cutting}).

\section{Relation to symplectic cutting}
The cutting construction was originally defined
for symplectic manifolds (see \cite{L}). In this
paper we followed \cite{CKT} and defined cutting
for manifold which are not necessarily
symplectic. However, our work can be related to
symplectic cutting as described in \cite{Me}. We
outline the main ideas of this procedure.

Assume that a symplectic manifold $(M,\omega)$ is
endowed with a spin$^c$ structure $P\to SOF(M)\to
M$ (with respect to an orientation and a
Riemannian structure). When a connection 1-form
$\theta\in\Omega^1(P;\mathfrak u(1))$ on $P\to
SOF(M)$ is given, then the following
compatibility condition between the symplectic
structure, the spin$^c$ structure, and the
connection may be imposed:
$$d\theta=\pi^*(-i\cdot\omega)\ ,$$
where $\pi\colon P\to M$ is the projection. When
this condition (and one more technical
assumption) are satisfied, then $(P,\theta)$ is
called \emph{a spin$^c$ prequantization} for
$(M,\omega)$ (alternatively, given an oriented
Riemannian manifold $M$, we `prequantize' the
manifold $(M,\omega)$, where $\omega$ is a closed
two-form, determined by the above equality). If
all those structures respect a $G$-action on the
bundles $P\to SOF(M)\to M$, then  $G$ acts on $M$
in a Hamiltonian fashion, with a `natural' moment
map $\Phi\colon M\to\mathfrak{g}^*$ given by
$$\pi^*\left(\Phi^\xi\right)=-i\left(\iota_{\xi_P}\theta\right)\qquad,\qquad\xi\in\mathfrak{g}\ .$$

In the case where $G=S^1$, we can cut the
manifold $M$ along a level set of the moment map
$\Phi$. The cutting construction can be extended
to the spin$^c$ prequantization $(P,\theta)$, and
so we end up with two pairs
$(P^\pm_{cut},\theta^\pm_{cut})$ for the cut
spaces $(M_{cut}^\pm,\omega^\pm_{cut})$.

The cutting construction for spin$^c$
prequantization involves a choice of an odd
integer $\ell\in\mathbb Z$. It turns out that,
$(P^\pm_{cut},\theta_{cut}^\pm)$ are spin$^c$
prequantizations for
$(M_{cut}^\pm,\omega^\pm_{cut})$ if and only if
the cutting was done alone the submanifold
$Z=\Phi^{-1}(\ell/2)$.

On the level of multiplicity diagrams, the
diagram of $P$ will be cut at $\ell/2$ to give
the diagrams for $P_{cut}^\pm$. The fact that
$\ell/2$ is not an integer is the reason for
having additivity.

Details about spin$^c$ prequantization for
symplectic manifolds and the corresponding
cutting construction will be available in
\cite{Me}.



\begin{thebibliography}{9}
\bibitem{Fr}
T. Friedrich, \emph{Dirac Operators in Riemannian Geometry},
Gyraduate Studies in Mathematics, vol.25, American Mathematical
Syociety, Providence, Rhode Island, 2000.

\bibitem{Kar}
V. Ginzburg, V. Guillemin, and Y. Karshon, \emph{Moment maps,
Cobordisms, and Hamiltonian Group Actions}, Mathematical Surveys and
Monograph, vol. 98, American Mathematical Society, 2002.

\bibitem{LM}
H. Lawson and M. Michelson, \emph{Spin Geometry}, Princeton, 1989.

\bibitem{L}
E. Lerman, \emph{Symplectic cuts}, Math Res. Letters 2 (1995),
247-258.

\bibitem{GSW}
V. Guillemin, S. Sternberg, and J. Weitsman, \emph{Signature
Quantization}, J. Diff. Geometry 66 (2004), 139-168.

\bibitem{CKT}
A. Cannas Da Silva, Y. Karshon, and S. Tolman, \emph{Quantization of
Presymplectic Manifolds and Circle Actions}, Trans. Amer. Math.
Society 352(2) (1999), 525-552.

\bibitem{GK}
M. D. Grossberg and Y. Karshon, \emph{Equivariant Index and the
Moment Map for Completely Integrable Torus Actions}, Advances in
Math. 133 (1998), 185-223.

\bibitem{BGV}
N. Berline, E.Getzler and M. Vergne, \emph{Heat Kernels and Dirac
Operators}, Springer-Verlag, 1992.

\bibitem{KV}
S. Kumar and M. Vergne, \emph{Equivariant cohomology with
generalized coefficients}, Ast\'{e}rique 215 (1993), 109-204.

\bibitem{ASIII}
M. F. Atiyah and I. M. Singer, \emph{The index of elliptic operators. III}, Ann. of Math. 87 (1968), 546-604.

\bibitem{Me}
S. Fuchs, \emph{Spin$^c$ prequantization and
symplectic cutting} (in preparation).

\bibitem{Symp-surj}
E. Meinrenken, \emph{Symplectic surgery and the
Spin-c Dirac operator}, Advances in Mathematics
134 (1998), 240--277.

\bibitem{Equiv-Sternberg}
V. Guillemin and S. Sternberg,
\emph{Supersymmetry and Equivariant de Rham
Theory}, Springer-Verlag Berlin Heidelberg
(1999).

\end{thebibliography}
\end{document}